    \tikzset{node distance=2cm, auto}
\newcolumntype{C}{>{$}c<{$}}
\newcolumntype{L}{>{$}l<{$}}
\newcolumntype{R}{>{$}r<{$}}
\newcommand{\cmark}{\ding{51}}
\newcommand{\xmark}{\ding{55}}
\theoremstyle{plain}
\newtheorem{theorem}{Theorem}[section]
\newtheorem{lemma}[theorem]{Lemma}
\newtheorem{proposition}[theorem]{Proposition}
\newtheorem{corollary}[theorem]{Corollary}
\theoremstyle{definition}
\newtheorem{remark}[theorem]{Remark}
\newtheorem{construction}[theorem]{Construction}
\newtheorem{setting}[theorem]{Setting}
\Crefname{construction}{Construction}{Constructions}
\def\CC{{\mathbb C}}
\def\KK{{\mathbb K}}
\def\TT{{\mathbb T}}
\def\ZZ{{\mathbb Z}}
\def\QQ{{\mathbb Q}}
\def\PP{{\mathbb P}}
\def\KKK{\mathcal{K}}
\def\RRR{{\mathcal R}}
\DeclareMathOperator{\Eff}{Eff}
\DeclareMathOperator{\Mov}{Mov}
\DeclareMathOperator{\SAmple}{SAmple}
\DeclareMathOperator{\Ample}{Ample}
\DeclareMathOperator{\rk}{rank}
\def\quot{/\!\!/}
\def\bangle#1{{\langle #1 \rangle}}
\DeclareMathOperator{\rank}{rank}
\DeclareMathOperator{\Cl}{Cl}
\def\WDiv{{\rm WDiv}}
\def\Pic{{\rm Pic}}
\def\Spec{{\rm Spec}}
\def\cone{{\rm cone}}
\def\conv{{\rm conv}}
\def\reg{{\rm reg}}
\def\rlv{{\rm rlv}}
\newcommand{\sstable}{\mathrm{ss}}
\DeclareMathOperator{\Cox}{\mathcal{R}}
\DeclareMathOperator{\LP}{LP}
\newcommand{\free}[1]{#1^0}
\newcommand{\tor}[1]{#1^{\mathrm{tor}}}
\title[On smooth Calabi-Yau threefolds of Picard number two]%
{On smooth Calabi-Yau threefolds\\ of Picard number two}
\author[C.~Mauz]{Christian Mauz}
\address{Mathematisches Institut, Universit\"at T\"ubingen,
Auf der Morgenstelle 10, 72076 T\"ubingen, Germany}
\email{mauz@math.uni-tuebingen.de}
\subjclass[2010]{14J32}
\begin{document}

\begin{abstract}
We classify all smooth Calabi-Yau threefolds of Picard number two
that have a general hypersurface Cox ring.
\end{abstract}

\maketitle

\section{Introduction}
This article contributes to the explicit classification
of Calabi-Yau threefolds.
Recall that a Calabi-Yau variety is
a normal projective complex variety~$X$
with trivial canonical class $\KKK_X$,
at most canonical singularities
and $h^i(X, \mathcal{O}_X) = 0$ for $i  = 1, \dotsc, \dim(X)-1$.
Calabi-Yau varieties
form a vast and actively studied area of research,
also aiming for classification results
such as \cites{KrSk, CDL, GHL, Og, Og94} or more
recently \cites{SchSk,Cy,GRvdH,HaKa}.

The present paper takes up the classification approach
based on positively graded rings \cites{BrGe,BrKaLe,BrKa}
yet in the multigraded setting.
We study Calabi-Yau threefolds $X$ in terms of their Cox ring.
Recall that the Cox ring of a normal
projective variety $X$ with finitely generated divisor class group
$\Cl(X) = K$ is the graded algebra
\[
	\mathcal{R}(X) \coloneqq \bigoplus_{[D] \in \Cl(X)} \Gamma(X, \mathcal{O}_X(D)).
\]
We consider the case that our Calabi-Yau threefold
$X$ comes with a hypersurface Cox rings, that means
that we have a $K$-graded presentation
\[
	\mathcal{R}(X)
	= R_g
	= \CC[T_1, \dotsc, T_r] / \langle g \rangle
\]
with a homogeneous polynomial $g$ of degree $\mu \in K$
such that $T_1, \dotsc, T_r$ form a minimal system of $K$-prime
generators for $R_g$.
In particular $\mathcal{R}(X) = R_g$ is a finitely generated $\CC$-algebra,
hence $X$ is a Mori dream space in the sense of \cite{HuKe}.
Note that a smooth Calabi-Yau variety of dimension at most three is
a Mori dream space if and only if
its cone of effective divisors is rational polyhedral~\cite{McK}.
More general, Mori dream spaces of Calabi-Yau type
are completely characterized via the singularities
of their total coordinate space $\Spec\,\Cox(X)$~\cite{KaOk}.

Following \cite{HLM},
we say that $R_g$ resp.\ $g$ is \emph{spread}
if each monomial of degree~$\mu$ is a 
convex combination over those monomials showing up in $g$ with
non-zero coefficient.
Besides, we call $R_g$
\emph{general (smooth, Calabi-Yau)} if $g$ 
\index{hypersurface Cox ring!Calabi-Yau}
admits an open neighbourhood~$U$
in the finite dimensional vector space of all
$\mu$-homogeneous polynomials
such that every $h \in U$ yields a
hypersurface Cox ring~$R_{h}$ of a
normal (smooth, Calabi-Yau) variety~$X_h$ with
divisor class group~$K$.
In~\cite{HLM} general hypersurface Cox rings were applied to
the classification of smooth Fano fourfolds
of Picard number two.

In dimension two Calabi-Yau varieties are K3 surfaces.
Their Cox rings have been studied in~\cites{AHL,ACL,Ot},
in particular describing several classes of K3 surfaces
with a hypersurface Cox ring.
Numbers~1, 2, 6 and~12 from
Oguiso's classification of smooth Calabi-Yau threefolds
that arise as a general complete intersection
in some weighted projective space~\cite{Og}
comprise all smooth Calabi-Yau threefolds $X$ with $\Pic(X) = \ZZ$ 
having a general hypersuface Cox ring;
see also \cite{IF}.
Besides, Przyjalkowski and Shramov have established
explicit bounds for smooth Calabi-Yau weighted complete intersections
in any dimension~\cite{PrSh}.

\medskip
Our main result concerns smooth Calabi-Yau threefolds
of Picard number two with a hypersurface Cox ring $R_g$.
Any projective variety $X$ with class group $K$ and
 Cox ring $R_g$ is encoded
by $R_g$ and an ample class $u \in K$
in the sense that~$X$ occurs
as the GIT quotient of the set 
of $u$-semistable points 
of $\Spec \, R_g$ by the quasitorus 
$\Spec \, \CC[K]$. 
In this setting, we write $w_i = \deg(T_i)$ and refer to 
the Cox ring generator degrees
$w_1,  \ldots, w_r \in K$, the 
relation degree $\mu \in K$ and an ample class $u \in K$ as 
\emph{specifying data} \index{specifying data}
 of the variety $X$.
Note $r = \dim(X) + 1 + \rank(K)$, hence
a hypersurface Cox ring $R_g$ of a threefold with Picard number two
has six generators $w_1, \dotsc, w_6$ .

\begin{theorem}
\label{thm:CY3folds}
The following table lists specifying data,
$w_1, \dotsc, w_6$, $\mu$ and~$u$ in~$\Cl(X)$ for all
smooth Calabi-Yau threefolds~$X$ of Picard number two
that have a spread hypersurface Cox ring.

\begin{center}
\scriptsize
\setlength\arraycolsep{2pt}
\setlength\tabcolsep{2pt}
\newcounter{CYthreefold}
\newcommand{\mycolumnwidth}{.475\textwidth}

\begin{minipage}[t]{\mycolumnwidth}
\centering
\begin{longtable}[b]{>{\refstepcounter{CYthreefold}\theCYthreefold}LCCCC}
\toprule
\multicolumn{1}{c}{No.} & \Cl(X) & [w_1, \dotsc, w_6] & \mu & u \\ \midrule

\label{CY3fold:I} &
\ZZ^2 &
\begin{bmatrix*}[r]
	1 & 1 & 1 & 0 & 0 & 0 \\
	0 & 0 & 0 & 1 & 1 & 1
\end{bmatrix*} &
\begin{bmatrix} 3 \\  3 \end{bmatrix} & \begin{bmatrix} 1 \\  1 \end{bmatrix} \\ \midrule

\label{CY3fold:I-tors1} &
\ZZ^2 \times \ZZ / 3 \ZZ &
\begin{bmatrix*}[r]
	1 & 1 & 1 & 0 & 0 & 0 \\
	0 & 0 & 0 & 1 & 1 & 1 \\
	\bar{0} & \bar{1} & \bar{2} & \bar{0} & \bar{1} & \bar{2}
\end{bmatrix*} &
\begin{bmatrix} 3 \\  3 \\ \bar{0} \end{bmatrix} & \begin{bmatrix} 1 \\  1 \\ \bar{0} \end{bmatrix} \\ \midrule

\label{CY3fold:II-1} &
\ZZ^2 &
\begin{bmatrix*}[r]
	1 & 1 & 1 & 1 & 0 & 0 \\
	0 & 0 & 1 & 1 & 1 & 1
\end{bmatrix*} &
\begin{bmatrix} 4 \\  4 \end{bmatrix} & \begin{bmatrix} 2 \\  1 \end{bmatrix} \\ \midrule

\label{CY3fold:II-2} &
\ZZ^2 &
\begin{bmatrix*}[r]
	1 & 1 & 1 & 3 & 0 & 0 \\
	0 & 0 & 1 & 3 & 1 & 1
\end{bmatrix*} &
\begin{bmatrix} 6 \\  6 \end{bmatrix} & \begin{bmatrix} 2 \\  1 \end{bmatrix} \\ \midrule

\label{CY3fold:III-i-1} &
\ZZ^2 &
\begin{bmatrix*}[r]
	1 & 1 & 1 & 0 & 0 & -1 \\
	0 & 0 & 0 & 1 & 1 & 1
\end{bmatrix*} &
\begin{bmatrix} 2 \\  3 \end{bmatrix} & \begin{bmatrix} 1 \\  1 \end{bmatrix} \\ \midrule

\label{CY3fold:III-i-2} &
\ZZ^2 &
\begin{bmatrix*}[r]
	1 & 1 & 1 & 0 & 0 & -2 \\
	0 & 0 & 0 & 1 & 1 & 1
\end{bmatrix*} &
\begin{bmatrix} 1 \\  3 \end{bmatrix} & \begin{bmatrix} 1 \\  1 \end{bmatrix} \\ \midrule

\label{CY3fold:III-i-3-tors} &
\ZZ^2 \times \ZZ / 3 \ZZ &
\begin{bmatrix*}[r]
	1 & 1 & 1 & 0 & 0 & -3 \\
	0 & 0 & 0 & 1 & 1 & 1 \\
	\overline{0} & \overline{1} & \overline{2} &
	\overline{1} & \overline{2} & \overline{0}
\end{bmatrix*} &
\begin{bmatrix} 0 \\  3 \\ \overline{0} \end{bmatrix} & \begin{bmatrix} 1 \\  1 \\ \overline{0} \end{bmatrix} \\ \midrule

\label{CY3fold:III-i-5} &
\ZZ^2 &
\begin{bmatrix*}[r]
	1 & 1 & 1 & 0 & 0 & -3 \\
	0 & 0 & 0 & 2 & 3 & 1
\end{bmatrix*} &
\begin{bmatrix} 0 \\  6 \end{bmatrix} & \begin{bmatrix} 1 \\  1 \end{bmatrix} \\ \midrule

\label{CY3fold:III-ii-1} &
\ZZ^2 &
\begin{bmatrix*}[r]
 	 1 & 1 & 1 & 1 & 0 & 0 \\
	-2 & 0 & 0 & 0 & 1 & 1
\end{bmatrix*} &
\begin{bmatrix} 4 \\  0 \end{bmatrix} & \begin{bmatrix} 1 \\  1 \end{bmatrix} \\ \midrule

\label{CY3fold:III-ii-2} &
\ZZ^2 &
\begin{bmatrix*}[r]
 	 1 & 1 & 1 & 3 & 0 & 0 \\
	-2 & 0 & 0 & 0 & 1 & 1
\end{bmatrix*} &
\begin{bmatrix} 6 \\  0 \end{bmatrix} & \begin{bmatrix} 1 \\  1 \end{bmatrix} \\ \midrule

\label{CY3fold:III-iii-1a} &
\ZZ^2 & 
\begin{bmatrix*}[r]
 	1 & 1 & 1 & 1 & 0 & 0 \\
	0 & 0 & 0 & 1 & 1 & 1
\end{bmatrix*} &
\begin{bmatrix} 4 \\  3 \end{bmatrix} & \begin{bmatrix} 2 \\  1 \end{bmatrix} \\ \midrule

\label{CY3fold:III-iii-1b} & \ZZ^2 & 
\begin{bmatrix*}[r]
 	1 & 1 & 1 & 1 & 0 & 0 \\
	0 & 0 & 0 & 1 & 1 & 1
\end{bmatrix*} &
\begin{bmatrix} 4 \\  3 \end{bmatrix} & \begin{bmatrix} 1 \\ 2 \end{bmatrix}\\ \midrule

\label{CY3fold:III-iii-2} &
\ZZ^2 &
\begin{bmatrix*}[r]
 	1 & 1 & 1 & 3 & 0 & 0 \\
	0 & 0 & 0 & 2 & 1 & 1
\end{bmatrix*} &
\begin{bmatrix} 6 \\  4 \end{bmatrix} & \begin{bmatrix} 1 \\  1 \end{bmatrix} \\ \midrule

\label{CY3fold:IV-ii-1} &
\ZZ^2 &
\begin{bmatrix*}[r]
 	 1 & 1 & 1 & 3 & 0 & 0 \\
	-1 & 0 & 0 & 1 & 1 & 1
\end{bmatrix*} &
\begin{bmatrix} 6 \\  2 \end{bmatrix} & \begin{bmatrix} 1 \\  1 \end{bmatrix} \\ 

\bottomrule

\end{longtable}
\end{minipage}
\begin{minipage}[t]{\mycolumnwidth}
\centering
\begin{longtable}[t]{>{\refstepcounter{CYthreefold}\theCYthreefold}LCCCC}
\toprule
\multicolumn{1}{c}{No.} & \Cl(X) & [w_1, \dotsc, w_6] & \mu & u \\ \midrule

\label{CY3fold:IV-ii-2a} &
\ZZ^2 & 
\begin{bmatrix*}[r]
 	 1 & 1 & 1 & 1 & 0 & 0 \\
	-1 & 0 & 0 & 1 & 1 & 1
\end{bmatrix*} &
\begin{bmatrix} 4 \\  2 \end{bmatrix} & \begin{bmatrix} 2 \\  1 \end{bmatrix} \\ \midrule

\label{CY3fold:IV-ii-2b} & 
\ZZ^2 & 
\begin{bmatrix*}[r]
 	 1 & 1 & 1 & 1 & 0 & 0 \\
	-1 & 0 & 0 & 1 & 1 & 1
\end{bmatrix*} &
\begin{bmatrix} 4 \\  2 \end{bmatrix} & \begin{bmatrix} 1 \\  2 \end{bmatrix}\\ \midrule

\label{CY3fold:IV-ii-3a} &
\ZZ^2 & 
\begin{bmatrix*}[r]
 	 1 & 1 & 1 & 1 & 0 & 0 \\
	-2 & 0 & 0 & 1 & 1 & 1
\end{bmatrix*} &
\begin{bmatrix} 4 \\  1 \end{bmatrix} & \begin{bmatrix} 2 \\  1 \end{bmatrix} \\ \midrule

\label{CY3fold:IV-ii-3b} & 
\ZZ^2 & 
\begin{bmatrix*}[r]
 	 1 & 1 & 1 & 1 & 0 & 0 \\
	-2 & 0 & 0 & 1 & 1 & 1
\end{bmatrix*} &
\begin{bmatrix} 4 \\  1 \end{bmatrix} & \begin{bmatrix} 1 \\  2 \end{bmatrix}\\ \midrule

\label{CY3fold:V-i-1} &
\ZZ^2 &
\begin{bmatrix*}[r]
 	1 & 1 & 1 & 2 & 1 & 0 \\
	0 & 0 & 0 & 1 & 1 & 1
\end{bmatrix*} &
\begin{bmatrix} 6 \\  3 \end{bmatrix} & \begin{bmatrix} 3 \\  1 \end{bmatrix} \\ \midrule

\label{CY3fold:V-i-2} &
\ZZ^2 &
\begin{bmatrix*}[r]
 	1 & 1 & 1 & 4 & 1 & 0 \\
	0 & 0 & 0 & 2 & 1 & 1
\end{bmatrix*} &
\begin{bmatrix} 8 \\  4 \end{bmatrix} & \begin{bmatrix} 3 \\  1 \end{bmatrix} \\ \midrule

\label{CY3fold:V-i-3} &
\ZZ^2 &
\begin{bmatrix*}[r]
 	1 & 1 & 1 & 5 & 2 & 0 \\
	0 & 0 & 0 & 2 & 1 & 1
\end{bmatrix*} &
\begin{bmatrix} 10 \\  4 \end{bmatrix} & \begin{bmatrix} 3 \\  1 \end{bmatrix} \\ \midrule

\label{CY3fold:V-i-4} &
\ZZ^2 &
\begin{bmatrix*}[r]
 	1 & 1 & 2 & 5 & 1 & 0 \\
	0 & 0 & 0 & 2 & 1 & 1
\end{bmatrix*} &
\begin{bmatrix} 10 \\  4 \end{bmatrix} & \begin{bmatrix} 3 \\  1 \end{bmatrix} \\ \midrule

\label{CY3fold:V-i-5} &
\ZZ^2 &
\begin{bmatrix*}[r]
 	1 & 1 & 2 & 7 & 3 & 0 \\
	0 & 0 & 0 & 2 & 1 & 1
\end{bmatrix*} &
\begin{bmatrix} 14 \\  4 \end{bmatrix} & \begin{bmatrix} 4 \\  1 \end{bmatrix} \\ \midrule

\label{CY3fold:V-ii-1} &
\ZZ^2 &
\begin{bmatrix*}[r]
 	 1 & 1 & 1 & 2 & 1 & 0 \\
	-2 & 0 & 0 & 0 & 1 & 1
\end{bmatrix*} &
\begin{bmatrix} 6 \\  0 \end{bmatrix} & \begin{bmatrix} 2 \\  1 \end{bmatrix} \\ \midrule

\label{CY3fold:V-ii-2} &
\ZZ^2 &
\begin{bmatrix*}[r]
 	 1 & 1 & 1 & 1 & 3 & 0 \\
	-2 & 0 & 0 & 0 & 1 & 1
\end{bmatrix*} &
\begin{bmatrix} 7 \\  0 \end{bmatrix} & \begin{bmatrix} 4 \\  1 \end{bmatrix} \\ \midrule

\label{CY3fold:V-ii-3} &
\ZZ^2 &
\begin{bmatrix*}[r]
 	 2 & 1 & 1 & 1 & 3 & 0 \\
	-2 & 0 & 0 & 0 & 1 & 1
\end{bmatrix*} &
\begin{bmatrix} 8 \\  0 \end{bmatrix} & \begin{bmatrix} 4 \\  1 \end{bmatrix} \\ \midrule

\label{CY3fold:VI-i-1a} &
\ZZ^2 & 
\begin{bmatrix*}[r]
 	1 & 1 & 2 & 5 & 1 & 0 \\
	0 & 0 & 1 & 3 & 1 & 1
\end{bmatrix*} &
\begin{bmatrix} 10 \\  6 \end{bmatrix} & \begin{bmatrix} 3 \\  1 \end{bmatrix} \\ \midrule

\label{CY3fold:VI-i-1b} & 
\ZZ^2 & 
\begin{bmatrix*}[r]
 	1 & 1 & 2 & 5 & 1 & 0 \\
	0 & 0 & 1 & 3 & 1 & 1
\end{bmatrix*} &
\begin{bmatrix} 10 \\  6 \end{bmatrix} & \begin{bmatrix} 3 \\  2 \end{bmatrix}\\ \midrule

\label{CY3fold:VI-ii-1} &
\ZZ^2 &
\begin{bmatrix*}[r]
 	 1 & 1 & 1 & 4 & 1 & 0 \\
	-1 & 0 & 0 & 1 & 1 & 1
\end{bmatrix*} &
\begin{bmatrix} 8 \\  2 \end{bmatrix} & \begin{bmatrix} 5 \\  1 \end{bmatrix} \\ \midrule

\label{CY3fold:VI-iii-1} &
\ZZ^2 &
\begin{bmatrix*}[r]
 	 1 & 2 & 1 & 1 & 1 & 0 \\
	-1 & -1 & 0 & 0 & 1 & 1
\end{bmatrix*} &
\begin{bmatrix} 6 \\  0 \end{bmatrix} & \begin{bmatrix} 2 \\  1 \end{bmatrix} \\ 

\bottomrule
\end{longtable}
\end{minipage}
\end{center}
\bigskip

\noindent
Moreover, each of the items~1 to~\ref{CY3fold:VI-iii-1}
even defines a general smooth Calabi-Yau hypersurface Cox ring and
thus provides the specifying data for a whole family of smooth
Calabi-Yau threefolds.
Any two smooth Calabi-Yau threefolds of Picard number
two with specifying data from distinct items of the table
are not isomorphic to each other.
\end{theorem}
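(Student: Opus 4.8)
The plan is to convert the hypotheses---smooth, Calabi-Yau, Picard number two, spread hypersurface Cox ring---into finitely many explicit numerical conditions on the specifying data $(w_1,\dots,w_6,\mu,u)$ in $K=\Cl(X)$, to reduce the possibilities to a finite search, and then to settle completeness, generality and non-isomorphy in turn. Throughout I would work up to the natural symmetries of the data, namely the action of $\Aut(K)$ on $K\cong\ZZ^2\oplus\tor{K}$ (on the free part this is $\GL_2(\ZZ)$) together with permutations of the generator degrees; two data sets related by such a symmetry yield isomorphic varieties, so it suffices to classify symmetry classes. Since $X$ is a threefold of Picard number two with a hypersurface Cox ring, we have $r=\dim(X)+1+\rank(K)=6$ and $\rank(K)=2$, and since $X$ is smooth, $\Pic(X)=\Cl(X)=K$.

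Next I would record the defining constraints. The generators must generate $K$, the $T_i$ and $g$ must be $K$-prime, and the effective cone $\cone(w_1,\dots,w_6)$ must be pointed of full dimension; this lets me normalize $Q=[w_1,\dots,w_6]$ so that the free parts lie in a fixed pointed cone of $\ZZ^2$, sorting the configurations into the combinatorial types~I--VI according to how the rays are distributed. The Calabi-Yau (trivial canonical) condition forces, by adjunction on the ambient toric variety whose canonical class is $-\sum_i w_i$, the relation degree $\mu=w_1+\dots+w_6$ in $K$, which one verifies holds in every row of the table, torsion included. The spread condition together with the requirement that an ample $u$ exist---equivalently, that the Mori chamber decomposition of the movable cone contain a chamber whose GIT quotient is our $X$---restricts $\mu$ relative to the $w_i$, while smoothness becomes a local condition stating that the general member of $|\mu|$ meets each relevant toric chart in a smooth hypersurface avoiding the ambient quotient singularities.

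The hard part will be completeness. Using pointedness of the effective cone and generation of $K$, I would bound the entries of $Q$ from above within each type: the ample class must lie in the relative interior of a maximal Mori chamber, and this together with $\mu=\sum_i w_i$ and the (quasi)smoothness conditions caps the weights. The delicate point is that quasismoothness of $V(g)$ is necessary but not sufficient for smoothness of $X$; one must additionally rule out residual quotient singularities of the ambient that the hypersurface cannot avoid, and verify $h^1(\OOO_X)=h^2(\OOO_X)=0$ so that $X$ is genuinely Calabi-Yau. Once the bounds are fixed, a finite (computer-assisted) enumeration over normalized weight matrices, over $\mu=\sum_i w_i$, and over GIT chambers is expected to leave exactly the listed items.

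For the generality assertion I would show, for each item, that the conditions ``normal'', ``smooth'' and ``Calabi-Yau'' are open on the space of $\mu$-homogeneous polynomials: a Bertini-type argument gives that the general $g$ defines a quasismooth hypersurface with smooth quotient, while $K$-primality of $g$ and of the $T_i$, and spreadness, are open (indeed generic) coefficient conditions, so a neighbourhood $U$ of such $g$ consists of general smooth Calabi-Yau hypersurface Cox rings, upgrading each spread item to a whole family. Finally, for non-isomorphy I would use that the Cox ring of $X$ is intrinsic, hence so are its minimal generator degrees $w_i$ and the relation degree $\mu$ up to $\Aut(K)$, and that the ample cone is intrinsic, so the chamber containing $u$ is determined by $X$; thus the full specifying data is an invariant of $X$ up to the symmetries above. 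Since the table is arranged so that distinct items lie in distinct symmetry classes---in particular the pairs sharing $(w_i,\mu)$ but differing in $u$, such as items~\ref{CY3fold:III-iii-1a} and~\ref{CY3fold:III-iii-1b}, are related by no data symmetry, because no element of $\GL_2(\ZZ)$ composed with a permutation fixes the weight multiset while swapping the two chambers and $\mu$---the corresponding threefolds are pairwise non-isomorphic.
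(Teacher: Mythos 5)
Your outline reproduces the paper's broad strategy (normalize the data up to $\Aut(K)$ and permutations, force $\mu = w_1 + \dotsb + w_6$ from triviality of $\KKK_X$, constrain and enumerate, then prove generality and distinguish items), but two steps that carry the real weight of the proof are missing. First, your enumeration is not finite as stated, because nothing in your plan bounds the torsion part of $K = \Cl(X)$: smoothness of $X$ does not a priori restrict $\tor{K}$, and the table contains items with $\Cl(X) = \ZZ^2 \times \ZZ/3\ZZ$. The paper needs dedicated arguments here --- local factoriality forces $K$ to be generated by three suitably chosen degrees (so the torsion is cyclic), and the order $t$ is then bounded through divisibility of exponents of monomials of degree $\mu$ (\cref{lem:rho2tors}, \cref{lem:rho2torsbound}, \cref{lem:torsboundpower}, \cref{lem:tors24}); moreover, to make these constraints bite in every chamber the paper first proves that all small modifications $X(\eta)$ of a smooth weakly Calabi-Yau threefold are again smooth, via a sequence of flops (\cref{floplemma}, \cref{CYSQMsmooth}). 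Your plan applies constraints only in the chamber of the given ample class and has no substitute for this flop argument, which is also what handles smoothness in the four items where $\mu \notin \tau$ and Bertini does not apply directly.

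Second, and more seriously, for the generality assertion you treat ``$R_g$ is a Cox ring'' as a package of open coefficient conditions settled by a Bertini-type argument. Openness is the easy half; the issue is non-emptiness of the locus where $R_g$ is normal and \emph{factorially $K$-graded} with $T_1, \dotsc, T_6$ pairwise non-associated $K$-primes --- without this, $X_g$ need not have divisor class group $K$ or Cox ring $R_g$ at all, and quasismoothness of $V(g)$ is irrelevant to it. This is exactly where the paper invests most of its verification: prime binomials to realize $K$-primality of the variables (\cref{prop:varclassprimeforCY} and the accompanying table), the Dolgachev polytope and base-point-free criteria (\cref{prop:ufdcritforCY}), and --- for the items where $\mu$ lies on the boundary of $\Mov(R)$, so the standard criterion fails --- newly developed factoriality transfer via $\Sigma$-homogenizations and fake weighted projective spaces (\cref{algmod-general}, \cref{factfromSimplexforCY}, \cref{prop:ufdcritblowup}). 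Your non-isomorphy argument, by contrast, is essentially sound and close in spirit to the paper's: the paper makes the invariance of the data precise through generator degree dimension tuples and distinguishes the chamber-pairs sharing a Cox ring (e.g.\ items~\ref{CY3fold:III-iii-1a} and~\ref{CY3fold:III-iii-1b}) by the self-intersection number $\mu^3$ of the relation degree (\cref{lem:reldeg}), which is cleaner than your ad hoc check that no symmetry swaps the chambers, but both routes work.
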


Note that the varieties from \cref{thm:CY3folds} constitute a
finite number of families. 
For recent general results on boundedness of
Calabi-Yau threefolds we refer to \cite{CDHJS} as well as
\cite{Wi21} for the case of Picard number two.

\medskip
Hypersurfaces in toric Fano varieties
form a rich source of examples for Calabi-Yau varieties,
e.g.\ \cites{Ba94,Ba17,ACG16}.
\Cref{thm:CY3folds} comprises several varieties of this type.

\begin{remark}
Any Mori dream space $X$ can be embedded into a projective
toric variety by choosing  
a graded presentation of its Cox ring $\Cox(X)$; see
\cite{ADHL}*{Sec. 3.2.5} for details.
The following table shows for which varieties $X$ from
\cref{thm:CY3folds} the presentation $\Cox(X) = R_g$ gives rise
to an embedding into a (possibly singular) toric Fano variety.
Observe that in our situation this simply means
$\mu \in \Ample(X)$.

\begin{center}
\begin{tabular}{ccccccccccccccc}
1 & 2 & 3 & 4 & 5 & 6 & 7 & 8 & 9 & 10 & 11 & 12 & 13 & 14 & 15 \\ \hline
	\cmark & \cmark & \xmark & \xmark & \cmark & \cmark & 
	\xmark &\xmark &\xmark &\xmark &
	\cmark &
	\xmark &
	\cmark & \cmark & \cmark \\[\medskipamount]
16 & 17 & 18 & 19 & 20 & 21 & 22 & 23 & 24 & 25 & 26 & 27 & 28 & 29 & 30  \\ \hline
	\xmark &
	\cmark &
	\xmark &
	\cmark &\cmark &\cmark &\cmark &\cmark &
	\xmark & \xmark & \xmark & 
	\xmark & \cmark & \cmark & \xmark  
\end{tabular}
\end{center}

\end{remark}

\tableofcontents

The author would like to thank Jürgen Hausen for
his constant support and very valuable advice.
In addition, the author is grateful to 
Antonio Laface for helpful disucssions
and his interest in this work.

\section{Background on Mori dream spaces}
\label{sec:toolboxCY}
A Mori dream space
is an irreducible normal projective
variety $X$ with finitely generated divisor class group $\Cl(X)$
and finitely generated Cox ring $\mathcal{R}(X)$.
As observed by Hu and Keel \cite{HuKe}, 
Mori dream spaces are characterized by their eponymous feature,
which is optimal behavior with respect to the Mori program~\cites{MMP1,MMP2}.
In this section we gather basic facts
on the combinatorial description of Mori dream spaces
from \cite{ADHL}.
The ground field $\KK$ is algebraically closed and of
characteristic zero.

\medskip
\noindent

Let us first recall some terminology concerning graded algebras.
Consider a finitely generated abelian group $K$
and an integral $\KK$-algebra $R = \bigoplus_{w \in K} R_w$
with a $K$-grading.     
A non-zero non-unit $f \in R$ is \emph{$K$-prime}
if it is homogeneous and $f \mid gh$ with
homogeneous elements $g, h \in R$ implies $f \mid g$ or 
$f \mid h$.
We say that $R$ is \emph{$K$-factorial}
or that the $K$-grading on $R$ is \emph{factorial}
if any homogeneous non-zero non-unit
is a product of $K$-prime elements.
Fix a system $f_1, \dotsc, f_r \in R$ of pairwise non-associated
$K$-prime generators for $R$. 
The \emph{effective cone} and the \emph{moving cone}
of $R$ in the rational vector space 
$K_\QQ = \QQ \otimes_\ZZ  K$ associated 
with~$K$ are
\[
\Eff(R) 
\coloneqq
\cone(\deg(f_1), \dotsc, \deg(f_r)), \qquad
\Mov(R)
\coloneqq
\bigcap_{i=1}^r
\cone(\deg(f_j);\; j \neq i).
\]
This definition does not depend on the choice of
$f_1, \dotsc, f_r$.
The $K$-grading on $R$ is called \emph{pointed}
if $R_0 = \KK$ holds and $\Eff(R)$ contains no line
and it is called \emph{almost free} if any $r-1$ of
$\deg(f_1), \dotsc, \deg(f_r)$ generate $K$ as a group.

Moreover, by an \emph{abstract Cox ring}
we mean a $K$-graded algebra $R$ such that
\begin{enumerate}
\item
$R$ is normal, integral and finitely generated,

\item 
$R$ has only constant homogeneous 
units, the $K$-grading is almost free, 
pointed and factorial, and

\item
the moving cone $\Mov(R)$ 
is of full dimension in $K_\QQ$.
\end{enumerate}

The Cox ring of a Mori dream space always
satisfies the conditions of an abstract Cox ring. 
Vice versa, we can produce Mori dream spaces
from abstract Cox rings
using the following construction ~\cite{ADHL}*{Constr. 3.2.1.3}.

\begin{construction} 
\label{constr:mdsCY}
Let $R$ be an abstract Cox ring and 
consider the action of the quasitorus 
$H = \Spec \, \KK[K]$ on the affine 
variety $\bar X  = \Spec \, R$.
For every GIT-cone $\lambda \in \Lambda(R)$ 
with
$\lambda^\circ \subseteq \Mov(R)^\circ$, 
we set
\[
X(\lambda) 
\ := \ 
\bar X^{ss}(\lambda) \quot H .
\]
Then $X = X(\lambda)$ is normal, projective 
and of dimension $\dim(R)-\dim(K_\QQ)$.
The divisor class group and 
the Cox ring of $X$ are given as
$$ 
\Cl(X) \ = \ K,
\qquad\qquad
\mathcal{R}(X) 
\ = \ 
\bigoplus_{\Cl(X)} \Gamma(X,\mathcal{O}_X(D))
\ = \ 
\bigoplus_{K} R_w 
\ = \ 
R.
$$
Moreover, the cones of effective, 
movable, semiample and ample 
divisor classes of~$X$ are given 
in $\Cl_\QQ(X) = K_\QQ$ as 
$$ 
\Eff(X) \ = \ \Eff(R), 
\qquad
\Mov(X) \ = \  \Mov(R), 
$$
$$
\SAmple(X) = \lambda,
\qquad
\Ample(X) = \lambda^\circ.
$$
\end{construction}
See \cite{ADHL}*{Thm. 3.2.1.4} for the description of the Cox ring
and \cite{ADHL}*{Prop. 3.3.2.9} for the description of the cones
of effective, movable, semiample and ample divisors.
Moreover \cite{ADHL}*{Thm. 3.2.1.9} guarantees that
indeed all Mori dream spaces arise from \cref{constr:mdsCY} 

\medskip
Choosing homogeneous generators for an abstract Cox ring
gives rise to a closed embedding
into a projective toric variety~\cite{ADHL}*{Constr. 3.2.5.7}.

\begin{construction}
\label{ambtv}
In the situation of \cref{constr:mdsCY},
consider a graded presentation
\[
    R = \mathbb{K}[T_1, \dotsc, T_r] / \mathfrak{a} 
\]
where $T_1, \dotsc, T_r$ define pairwise non-associated $K$-primes in $R$
and $\mathfrak{a} \subseteq S = \mathbb{K}[T_1, \dotsc, T_r]$ is
a homogeneous ideal. 
The GIT-fan $\Lambda(S)$ w.r.t.\ the diagonal $H$-action on $\KK^r = \Spec\,S$
refines the GIT-fan $\Lambda(R)$. Let $\tau \in \Lambda(S)$
with $\lambda^\circ \subseteq \tau^\circ$.  
Running \cref{constr:mdsCY} for $S$ and $\tau$
yields a projective toric variety $Z$
fitting in the following diagram
\[ \xymatrix{
    \bar X^\sstable (\lambda) \ar[r] \ar[d]_{\quot H} &
    (\KK^r)^\sstable (\tau) \ar[d]^{\quot H} \\
    X \ar[r]^\imath & Z
}\]
The embedding $\imath: X \rightarrow Z$ is neat, i.e.\
it is a closed embedding,
the torus invariant prime divisors on $Z$ restrict to pairwise
different prime divisors on $X$ and the induced pullback of
divisor class groups $\imath^*: \Cl(Z) \rightarrow \Cl(X)$
is an isomorphism.
\end{construction}

Varieties arising from abstract Cox rings admit
the following description for their local behavior 
\citelist{\cite{ADHL}*{Cor. 3.3.1.9} \cite{ADHL}*{Cor 3.3.1.12}}.

\begin{proposition}
\label{locprops}
In the situation of \cref{ambtv} the following hold.
\begin{enumerate}
\item
$X$ is $\QQ$-factorial if and only if $\lambda$ is of full dimension.

\item
$X$ is smooth if and only if 
$\bar X^\sstable$ is smooth and $X \subseteq Z^{\reg}$ holds.
\end{enumerate}
\end{proposition}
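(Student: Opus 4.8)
The plan is to treat both equivalences as local statements about the good quotient $q\colon \bar X^{\sstable}(\lambda) \to X$ and to compare them, via the diagram of \cref{ambtv}, with the parallel toric quotient $p\colon (\KK^r)^{\sstable}(\tau) \to Z$. Write $\bar X = \Spec R = \V(\mathfrak a) \subseteq \KK^r$, so that set-theoretically $\bar X^{\sstable}(\lambda) = \bar X \cap (\KK^r)^{\sstable}(\tau)$ and every $\bar x$ in this set carries the \emph{same} isotropy group $H_{\bar x}$ whether we regard it in $\bar X$ or in $\KK^r$. The unifying invariant throughout is the local class group: for a point $x \in X$ with closed $H$-orbit representative $\bar x$ and image $z = \imath(x) \in Z$, the description of local class groups for varieties with a Cox ring (\cite{ADHL}) identifies both $\Cl(X,x)$ and $\Cl(Z,z)$ with $K / \langle \deg(T_i) : \bar x_i \neq 0 \rangle$, which is canonically the character group of $H_{\bar x}$; neatness of $\imath$ makes these identifications compatible.

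For \textup{(i)} I would argue directly from \cref{constr:mdsCY}. Every semiample class has a positive multiple that is base-point free, hence represented by a line bundle, so $\SAmple(X) = \lambda$ is contained in $\Pic(X)_\QQ \subseteq \Cl(X)_\QQ = K_\QQ$. If $\lambda$ is full-dimensional, then $\Pic(X)_\QQ = K_\QQ$, i.e.\ every Weil divisor is $\QQ$-Cartier and $X$ is $\QQ$-factorial. Conversely, if $X$ is $\QQ$-factorial, then $\Pic(X)_\QQ = \Cl(X)_\QQ = K_\QQ$ and the ample cone $\Ample(X) = \lambda^\circ$ is a nonempty open subset of $K_\QQ$, forcing $\lambda$ to be full-dimensional. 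Equivalently, in the language of the first paragraph, $\QQ$-factoriality means all $H_{\bar x}$ are finite, i.e.\ all orbits through semistable points have dimension $\rank(K)$, which is exactly the condition that the GIT-cone $\lambda$ is full-dimensional.

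For \textup{(ii)} the main point is that $Z^{\reg}$ is precisely the free locus of the quotient: by the toric description, $z \in Z^{\reg}$ if and only if $H$ acts freely at the closed orbit over $z$, equivalently $H_{\bar x}$ is trivial, equivalently the weights $\deg(T_i)$ with $\bar x_i \neq 0$ generate $K$. Assume first $X \subseteq Z^{\reg}$. Then $\bar X^{\sstable}(\lambda) \subseteq p^{-1}(Z^{\reg})$ lies in the free locus of the $H$-action on $(\KK^r)^{\sstable}(\tau)$, so $q$ restricts to a principal $H$-bundle over $X$, in particular a smooth surjective morphism; consequently $X$ is smooth wherever $\bar X^{\sstable}$ is smooth, which gives the sufficiency direction. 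For necessity, suppose $X$ is smooth. Then $\Cl(X,x) = 0$ for every $x$, so by the identification of the first paragraph $\Cl(Z,z) = 0$, i.e.\ $z \in Z^{\reg}$; hence $X \subseteq Z^{\reg}$, and therefore $\bar X^{\sstable}(\lambda) \subseteq p^{-1}(Z^{\reg})$ lies in the free locus. With freeness now established everywhere along $\bar X^{\sstable}$, the smooth morphism $q$ transports smoothness of $X$ back to $\bar X^{\sstable}$, completing the equivalence.

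The step I expect to be the main obstacle is the identification $\Cl(X,x) \cong \Cl(Z,z) \cong K/\langle \deg(T_i) : \bar x_i \neq 0\rangle$ and, in particular, verifying that a nontrivial isotropy group $H_{\bar x}$ genuinely obstructs smoothness of $X$ at $x$ rather than being cancelled by an accidentally smooth finite quotient. This is exactly where the neatness of the embedding $\imath$ --- the isomorphism $\imath^* \colon \Cl(Z) \to \Cl(X)$ together with the fact that the invariant prime divisors of $Z$ restrict to distinct prime divisors on $X$ --- is indispensable: it forces the local class group of $X$ to be governed by the same vanishing pattern as for the ambient toric $Z$, so that smoothness of $X$ at $x$ is equivalent to triviality of $H_{\bar x}$ and hence to $z \in Z^{\reg}$. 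Establishing this local class group formula cleanly, via a Luna slice at $\bar x$ and the structure of the induced $H_{\bar x}$-action on it, is the technical heart of the argument.
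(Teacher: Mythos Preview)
The paper does not actually prove this proposition: it simply records it as a citation of \cite{ADHL}*{Cor.~3.3.1.9} and \cite{ADHL}*{Cor.~3.3.1.12}. Your proposal therefore goes well beyond what the paper does, by reconstructing the argument behind those corollaries --- the local class group formula $\Cl(X,x)\cong K/\langle w_i:\bar x_i\neq 0\rangle\cong\Chi(H_{\bar x})$ and the identification of $Z^{\reg}$ with the free locus of $p$. That reconstruction is essentially correct and is, in outline, how the cited results in \cite{ADHL} are established.

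One genuine slip: in your first argument for (i) you assert that a base-point-free class is ``represented by a line bundle''. This is false for Weil divisors on singular varieties: for instance $\mathcal O(1)$ on $\PP(1,1,2)$ is globally generated (by $T_0,T_1$) but is not invertible at the $A_1$-point, so ``base-point free $\Rightarrow$ Cartier'' fails and the containment $\SAmple(X)\subseteq\Pic_\QQ(X)$ cannot be deduced this way. Your alternative justification via isotropy groups --- $\QQ$-factoriality $\Leftrightarrow$ all $\Cl(X,x)$ finite $\Leftrightarrow$ all closed-orbit stabilizers $H_{\bar x}$ finite $\Leftrightarrow$ $\lambda$ full-dimensional --- is the right one and matches the approach in \cite{ADHL}; just drop the line-bundle argument. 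Also, in that alternative, ``all orbits through semistable points'' should read ``all closed orbits in the semistable locus''. Part (ii) is sound as written.
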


Furthermore, for hypersurface Cox rings
we have an explicit formula
for the anticanonical class~\cite{ADHL}*{Prop. 3.3.3.2}.

\begin{proposition} 
\label{prop:anticanclassCY}
Consder the situation of
Construction~\ref{ambtv}.
If $\mathfrak{a} = \langle g \rangle$ 
holds, then the anticanonical class of $X$ is
given in $K = \Cl(X)$ as 
\[
-\KKK_X
\ = \ 
\deg(T_1) +  \dotsb + \deg(T_r)
-
\deg(g).
\]
\end{proposition}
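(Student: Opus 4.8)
The plan is to reduce the computation to an adjunction formula on the ambient toric variety $Z$ furnished by \cref{ambtv}. Recall that $Z$ has Cox ring $S = \KK[T_1, \dotsc, T_r]$ with $\Cl(Z) = K$, that its torus invariant prime divisors $D_1, \dotsc, D_r$ correspond to the variables and satisfy $[D_i] = \deg(T_i)$ in $K$, and that the neat embedding $\imath \colon X \to Z$ induces an isomorphism $\imath^* \colon \Cl(Z) \to \Cl(X)$. First I would record the standard toric identity
\[
-\KKK_Z = D_1 + \dotsb + D_r, \qquad\text{hence}\qquad -\KKK_Z = \deg(T_1) + \dotsb + \deg(T_r) \ \text{in } K.
\]
Next, since $\mathfrak{a} = \langle g \rangle$, the image $\imath(X)$ is the hypersurface on $Z$ cut out by $g$; tracing through \cref{ambtv} shows that its Weil divisor class is $[\imath(X)] = \deg(g)$ in $\Cl(Z) = K$.

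The core step is adjunction: I would argue that $\KKK_X = \imath^*(\KKK_Z + [\imath(X)])$ as classes in $\Cl(X)$. Granting this, one immediately obtains
\[
-\KKK_X = \imath^*\bigl(-\KKK_Z - [\imath(X)]\bigr) = \imath^*\Bigl(\sum_{i=1}^r [D_i] - \deg(g)\Bigr),
\]
and transporting along the isomorphism $\imath^*$, under which $[D_i] \mapsto \deg(T_i)$, yields the claimed formula $-\KKK_X = \deg(T_1) + \dotsb + \deg(T_r) - \deg(g)$.

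The hard part will be justifying the adjunction identity, because neither $Z$ nor $X$ need be smooth and $\imath(X)$ need not be Cartier on $Z$, so the naive line bundle adjunction does not apply directly. I would handle this one of two ways. The intrinsic route works on the total coordinate spaces: on the smooth space $\bar Z = \KK^r$ the form $dT_1 \wedge \dotsb \wedge dT_r$ trivializes $\KKK_{\bar Z}$ but transforms under the $H = \Spec\,\KK[K]$ action by the character $\deg(T_1) + \dotsb + \deg(T_r)$, while its Poincar\'e residue along $\bar X = \V(g)$ trivializes $\KKK_{\bar X}$ up to the character $\deg(T_1) + \dotsb + \deg(T_r) - \deg(g)$; descending this $H$-equivariant trivialization through the characteristic quotient $\bar X^{\sstable} \to X$ identifies the anticanonical class $-\KKK_X$ with this character, giving the formula directly. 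Alternatively, one may pass to codimension one: since canonical classes of normal varieties are determined in codimension one, it suffices to verify adjunction along the smooth loci, where $X$ meets $Z^{\reg}$ in a Cartier hypersurface and the classical formula applies, the neatness of $\imath$ guaranteeing that prime divisors and their classes are matched correctly under $\imath^*$. Either way the only genuine content beyond bookkeeping is the character computation of the residue form, which is routine.
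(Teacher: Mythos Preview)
The paper does not prove this proposition at all; it simply quotes it from \cite{ADHL}*{Prop.~3.3.3.2}. Your argument is therefore not comparable to ``the paper's own proof,'' but it is a correct reconstruction of how that result is established. In particular, your intrinsic route via the total coordinate space is exactly the right mechanism: the Poincar\'e residue of $dT_1 \wedge \dotsb \wedge dT_r$ along $\bar X_g = V(g) \subseteq \KK^r$ gives an $H$-semi-invariant generator of $\omega_{\bar X_g}$ of weight $\deg(g) - \sum_i \deg(T_i)$, and this descends through the characteristic space quotient $\hat X_g \to X$ to identify $\KKK_X$ with that class in $K = \Cl(X)$. This sidesteps the issue you correctly flag, namely that $\KKK_Z + [\imath(X)]$ need not be Cartier on~$Z$, so a naive adjunction on~$Z$ is not available; the upstairs computation on the smooth $\bar Z = \KK^r$ is what makes it work. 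Your alternative codimension-one route is also fine, but one should say explicitly why the smooth locus of $X$ has complement of codimension at least two (this follows from normality of $X$) and why the pullback $\imath^*$ at the level of Weil divisor classes is well-defined after restricting to $Z^{\reg}$.
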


We call an irreducible normal variety $X$ \emph{weakly Calabi-Yau}
if its canonical class~$\KKK_X$ vanishes.
For varieties with hypersuface Cox ring
this notion only depends on the generator degrees
and the relation degree.
Moreover, it turns out that smooth weakly Calabi-Yau hypersurfaces
are Calabi-Yau varieties in the strong sense.

\begin{remark}
\label{rem:weaklyCY}
In the situation of
Construction~\ref{ambtv}
assume $\mathfrak{a} = \langle g \rangle$.
\begin{enumerate}
\item
From \cref{prop:anticanclassCY} we deduce that $X$ is weakly Calabi-Yau
if and only if $\mu = w_1 + \dotsb + w_r$ holds.
In particular, $\mu$ lies in the relative interior of $\Eff(R)$
whenever $X$ is weakly Calabi-Yau.

\item
If $X$ is weakly Calabi-Yau,
then \cref{prop:anticanclassCY}
shows that $X$ is an anticanonical hypersurface of a
projective toric variety $Z$ as in \cref{ambtv}.
If, in addition, $X$ is smooth, then \cref{locprops} allows us
to apply \cite{ACG}*{Prop. 6.1}.
From this we infer $h^i(X, \mathcal{O}_X) = 0$
for all $0 < i < \dim(X)$,
hence~$X$ is Calabi-Yau.
\end{enumerate}
\end{remark}

Let us briefly recall the notion of flops~\cite{Ko,KoMo}
in our situation.
A Weil divisor~$D$ on a variety $X$ is said to be
\emph{relatively ample} w.r.t\ a morphism $\varphi: X \rightarrow Y$
of varieties, or just \emph{$\varphi$-ample}, if
there is an open affine covering $Y = \bigcup V_i$
such that~$D$ restricts to an ample divisor
on each $\varphi^{-1}(V_i)$.
Moreover a proper birational morphism $\varphi: X \rightarrow Y$
of normal varieties is called \emph{extremal}, if
$X$ is $\QQ$-factorial and for each two Cartier divisors $D_1, D_2$ on $X$
there are $a_1, a_2 \in \ZZ$ where at least one of $a_1, a_2$ 
is non-zero and $a_1 D_1 - a_2 D_2$
is linearly equivalent to the pullback $\varphi^* C$ of some
Cartier divisor $C$ on $Y$.
A birational map $\psi: X^- \dashrightarrow X^+$ 
of $\QQ$-factorial weakly Calabi-Yau varieties
is a \emph{flop} if it fits 
into a commutative diagram
\[ \xymatrix{
X^- \ar@{-->}[rr]^\psi \ar[rd]_{\varphi^-} & & X^+ \ar[ld]^{\varphi^+} \\
& Y & 
}\]
where $\varphi^-: X^- \rightarrow Y$ and
$\varphi^+: X^- \rightarrow Y$ 
are small proper birational morphims,
$\varphi^-$ is extremal and there is a Weil divisor~$D$ on~$X^-$
such that $-D$ is $\varphi^-$-ample
and the proper transform of $D$ on $X^+$ is $\varphi^+$-ample.

\medskip
Later we will use that
weakly Calabi-Yau
Mori dream spaces of Picard number two
that share a common Cox ring
are connected by flops;
for convenience we give a direct proof here.

\begin{proposition}
\label{floplemma}
Let $R$ be an abstract Cox ring with grading group $K$ 
of rank two and
$\lambda, \eta \in \Lambda(R)$ full-dimensional cones with
$\lambda^\circ, \eta^\circ \subseteq \Mov(R)^\circ$.
Consider the varieties $X(\lambda)$ and $X(\eta)$
arising from \cref{constr:mdsCY}.
If the canonical class of $X(\lambda)$ is trival, then
there is a sequence of flops
\[
    X(\lambda) \dashrightarrow X_1
    \dashrightarrow \dotsb \dashrightarrow
    X_k \dashrightarrow X(\eta).
\]
\end{proposition}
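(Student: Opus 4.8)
The plan is to reduce to a single wall-crossing and to recognise each such crossing as a flop. Since $\rk(K)=2$, the rational vector space $K_\QQ$ is two-dimensional and the pointed, full-dimensional cone $\Mov(R)$ is a planar angular sector; the finite GIT-fan $\Lambda(R)$ subdivides it into full-dimensional chambers separated by rays. First I would list these chambers by their angular position and extract the chain $\lambda=\lambda_0,\lambda_1,\dots,\lambda_k=\eta$ of consecutive chambers between $\lambda$ and $\eta$, writing $X_i:=X(\lambda_i)$ and $\tau_i:=\lambda_i\cap\lambda_{i+1}$ for the separating wall. As $\Mov(R)$ is convex and $\lambda^\circ,\eta^\circ\subseteq\Mov(R)^\circ$, the segment from an interior point of $\lambda$ to one of $\eta$ runs inside $\Mov(R)^\circ$ and crosses each $\tau_i$; hence $\relint(\tau_i)\subseteq\Mov(R)^\circ$. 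It then suffices to prove that each induced map $\psi_i\colon X_i\dashrightarrow X_{i+1}$ is a flop, since the composition is the asserted sequence (the case $\lambda=\eta$ being trivial).

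Fix $i$ and put $Y:=X(\tau_i)$. The face relations $\tau_i\preceq\lambda_i$ and $\tau_i\preceq\lambda_{i+1}$ yield, via \cref{constr:mdsCY}, proper birational morphisms $\varphi^-\colon X_i\to Y$ and $\varphi^+\colon X_{i+1}\to Y$ that assemble into the required commutative triangle over $Y$. Since $\lambda_i$ and $\lambda_{i+1}$ are full-dimensional, \cref{locprops} makes $X_i$ and $X_{i+1}$ $\QQ$-factorial, so their rational class group equals $K_\QQ$. The decisive step is that $\varphi^-$ and $\varphi^+$ are small: because $\relint(\tau_i)\subseteq\Mov(R)^\circ$, the semiample classes on the wall are movable, so no divisor is contracted and $\psi_i$ is an isomorphism in codimension one. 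I would obtain this from $\Mov(X_i)=\Mov(R)$ in \cref{constr:mdsCY} together with the fact that the chambers of $\Lambda(R)$ lying in $\Mov(R)^\circ$ parametrise precisely the small $\QQ$-factorial modifications of one fixed variety \cite{ADHL}. This smallness is where the moving-cone hypothesis is genuinely used, and I expect it to be the main obstacle; the remaining verifications are formal.

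Granting smallness, everything else follows from the planar geometry of $K_\QQ$. The isomorphism $\psi_i$ in codimension one identifies $\Cl(X_i)=K=\Cl(X_{i+1})$ and carries $\KKK_{X_i}$ to $\KKK_{X_{i+1}}$, as the canonical class is determined in codimension one; inducting from $\KKK_{X(\lambda)}=0$ shows every $X_i$ is weakly Calabi-Yau. For extremality of $\varphi^-$, note that the Cartier divisors pulled back from $Y$ have class on the line $L:=\QQ\tau_i$, while $K_\QQ$ is two-dimensional; thus any two Cartier divisors $D_1,D_2$ on $X_i$ have classes that are proportional modulo $L$, giving integers $a_1,a_2$, not both zero, such that $a_1D_1-a_2D_2$ is linearly equivalent to the pullback $(\varphi^-)^{*}C$ of some Cartier divisor $C$ on $Y$. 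Finally the $\varphi^-$-ample classes form the open half-plane bounded by $L$ that contains $\lambda_i^\circ$, and the $\varphi^+$-ample classes the opposite half-plane containing $\lambda_{i+1}^\circ$. Choosing a Weil divisor $D$ on $X_i$ with $[D]\in\lambda_{i+1}^\circ\cap K$, the class $-[D]$ lies in the $\lambda_i$-side half-plane, so $-D$ is $\varphi^-$-ample, whereas the proper transform of $D$ keeps the class $[D]$ and is $\varphi^+$-ample. Hence each $\psi_i$ is a flop, completing the argument.
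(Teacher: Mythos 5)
Your plan follows the same architecture as the paper's proof: reduce to a chain of adjacent chambers inside $\Mov(R)^\circ$, contract each wall $\tau_i$ to $Y = X(\tau_i)$ via the face relations, get smallness from the moving-cone hypothesis, prove extremality by a rank-two lattice argument, and exhibit the flopping divisor by a sign condition relative to the wall. However, there is a genuine gap at the decisive step: you \emph{assert} that the $\varphi^-$-ample (resp.\ $\varphi^+$-ample) classes form exactly the open half-plane bounded by $L = \QQ\tau_i$ containing $\lambda_i^\circ$ (resp.\ $\lambda_{i+1}^\circ$). With the paper's definition of relative ampleness --- restriction of $D$ to the preimages of an affine cover of $Y$ must be ample --- this is precisely the substantive content of the whole proposition, and the paper spends two lemmas on it: \cref{galefiberformula} and \cref{toricflop} pass to a neat toric embedding, identify the preimages $(\varphi^+)^{-1}(Z_{\gamma_j})$ chart by chart, compute $\Cl(U) = K/\langle w_j\rangle$ and $\Ample(U) = \QQ_{>0}$, and only then restrict back to $X$ along the neat embedding. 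You flag smallness as the expected main obstacle, but smallness is a one-line citation (the paper quotes \cite{ADHL}*{Rem.~3.3.3.4}), and you dismiss the ampleness verification as ``formal'' --- this inverts where the work actually lies. The half-plane description is true, and could alternatively be derived from the standard criterion that for a proper morphism $D$ is relatively ample if and only if $D + m\varphi^*A$ is ample for $m \gg 0$ with $A$ ample on $Y$, combined with $\Ample(X_i) = \lambda_i^\circ$ and the fact that pullback classes lie on $\tau_i$; but some such argument must be supplied, and reconciling that criterion with the chart-restriction definition used here itself needs justification ($\QQ$-factoriality of $X_i$, so that $D$ is $\QQ$-Cartier, enters exactly at this point).

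Two smaller points. In the extremality step, knowing that $a_1[D_1]-a_2[D_2]$ lies on $L$ rationally does not yet produce a Cartier divisor $C$ on $Y$ with $a_1D_1 - a_2D_2$ linearly equivalent to $(\varphi^-)^*C$: you must first pass to a multiple killing a possible torsion component and landing in the subgroup of pullback classes, which has finite index in $\{c \in \Pic(X_i);\, c_\QQ \in L\}$ because the pullback of an ample Cartier divisor on $Y$ yields a nonzero class on $L$. This is routine and mirrors the paper's own argument (finite index of the subgroup generated by $[D_1],[D_2]$, plus the observation that a non-principal $C$ has non-principal pullback, forcing $(a_1,a_2)\neq(0,0)$). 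Finally, your propagation of the trivial canonical class along the chain via ``determined in codimension one'' is correct, though it is automatic in this setting: all $X(\eta_i)$ share the Cox ring $R$, hence the same canonical class in $K = \Cl(X(\eta_i))$. Also note that the paper sidesteps the half-plane claim on the minus side by choosing $-D$ honestly ample on $X^-$ (whence trivially $\varphi^-$-ample), so only the plus-side computation of \cref{toricflop} is needed; your symmetric choice of $[D] \in \lambda_{i+1}^\circ$ makes you rely on the unproved half-plane description twice.
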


We study the toric setting first. 
Consider $S = \KK[T_1, \dotsc, T_r]$
with a linear, pointed, almost free grading
of an abelian group~$K$ of rank two
and the associated action of the quasitorus $H = \Spec\, \KK[K]$ on $\KK^r$.
Let us recall some facts about
toric varieties arising from GIT-cones
as treated e.g.\ in \cite{ADHL}*{Chap. 2--3}.
The degree homomorphism
$Q: \ZZ^r \rightarrow K$, $e_i \mapsto w_i \coloneqq \deg(T_i)$
gives rise to a pair 
of mutually dual exact sequences:
\[ \xymatrix@C=4em{
0 \ar[r] & L \ar[r] & \ZZ^r \ar[r]^{P} & \ZZ^n \\
0 & K \ar[l] & \ar[l]_Q \ZZ^r  & \ar[l]_{P^*} \ZZ^n & \ar[l] 0
}
\]
Given a GIT-cone $\tau \in \Lambda(S)$ with $\tau^\circ \subseteq \Mov(S)^\circ$,
the associated toric variety $Z = (\KK^r)^\sstable (\tau) \quot H$ 
has the describing fan $\Sigma(\tau)$ given by
\[
    \Sigma(\tau) = \{ P(\gamma_0^*);\; \gamma_0 \in \rlv(\tau) \},
    \qquad
    \rlv(\tau) =
    \{ \gamma_0 \preceq \gamma;\, \tau^\circ \subseteq Q(\gamma_0)^\circ \}
\]
In particular all such fans share the same one-skeleton
consisting of the pairwise different rays 
generated by $v_1, \dotsc, v_r$ where $v_i \coloneqq P(e_i) \in \ZZ^n$.
Moreover, we denote~$Z_{\gamma_0}$ for the affine toric variety
associated with the lattice cone $P(\gamma_0^*) \subseteq \QQ^n$.
The covering of $Z$ by affine toric charts then formulates as 
\[
    Z = \bigcup_{\gamma_0 \in \rlv(\tau)} Z_{\gamma_0}.
\]

\begin{lemma}
\label{galefiberformula}
Let $\tau_1, \tau_2 \in \Lambda(S)$ 
with $\tau_i^\circ \subseteq \Mov(S)^\circ$.
Then for any $\gamma_1 \in \rlv(\tau_1)$, $\gamma_2 \in \rlv(\tau_2)$
we have
\[
    P(\gamma_2^*) \subseteq P(\gamma_1^*)
    \Longleftrightarrow
    \gamma_1 \subseteq \gamma_2.
\]
\end{lemma}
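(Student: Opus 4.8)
The plan is to reduce the equivalence to a single separation property of the face $\gamma_1$ and then read that property off by Gale duality. I identify a face $\gamma_0 \preceq \gamma$ with the subset $I \subseteq \{1, \dotsc, r\}$ for which $\gamma_0 = \cone(e_i;\, i \in I)$, so that $\gamma_0^* = \cone(e_i;\, i \notin I)$, $Q(\gamma_0) = \cone(w_i;\, i \in I)$ and $P(\gamma_0^*) = \cone(v_i;\, i \notin I)$. Since $\gamma_1 \subseteq \gamma_2$ amounts to $I_1 \subseteq I_2$, hence to $\gamma_2^* \subseteq \gamma_1^*$, the implication ``$\Leftarrow$'' is immediate: the generators of $P(\gamma_2^*)$ are among those of $P(\gamma_1^*)$. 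For ``$\Rightarrow$'' it suffices to prove the following separation property of the relevant face $\gamma_1$: for every index $j$ with $e_j \in \gamma_1$, i.e.\ $j \in I_1$, one has $v_j \notin P(\gamma_1^*)$. Granting this, an index $i \notin I_2$ gives $v_i \in P(\gamma_2^*) \subseteq P(\gamma_1^*)$, which forces $i \notin I_1$; thus $I_2^c \subseteq I_1^c$, that is $\gamma_1 \subseteq \gamma_2$.

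To establish the separation property I would produce a linear form on $\QQ^n$ that is non-negative on all generators of $P(\gamma_1^*)$ but strictly negative on $v_j$, manufactured from a suitable relation among the $w_i$. Fix a point $u \in \tau_1^\circ$. On the one hand $\gamma_1 \in \rlv(\tau_1)$ gives $u \in Q(\gamma_1)^\circ = \relint \cone(w_i;\, i \in I_1)$, so $u = \sum_{i \in I_1} c_i w_i$ with all $c_i > 0$; in particular $c_j > 0$. On the other hand $\tau_1^\circ \subseteq \Mov(S)^\circ \subseteq \cone(w_k;\, k \neq j)$, so also $u = \sum_{k \neq j} b_k w_k$ with $b_k \geq 0$. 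Subtracting the two presentations yields a relation $\sum_k a_k w_k = 0$ whose coefficients satisfy $a_j = -c_j < 0$, while for $k \notin I_1$ there is no $c_k$-term and $k \neq j$, so $a_k = b_k \geq 0$.

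It remains to turn this relation into the desired separating form, and here Gale duality enters: the relation $(a_k)$ lies in $\ker(Q)_\QQ = \im(P^*)_\QQ$, so $(a_k) = P^*(m)$ for some $m \in \QQ^n$, and since $P^*$ is the transpose of $P$ we get $\langle m, v_k \rangle = (P^*(m))_k = a_k$ for all $k$. Thus $m$ is non-negative on every generator $v_k$ ($k \notin I_1$) of $P(\gamma_1^*)$, whence $m \geq 0$ on $P(\gamma_1^*)$, while $\langle m, v_j \rangle = a_j < 0$. This proves $v_j \notin P(\gamma_1^*)$ and completes the argument.

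The main obstacle I expect is the construction in the middle step: recognising that relevance of $\gamma_1$ and membership of $u$ in the moving cone provide two genuinely different presentations of the same point $u$, whose difference is a relation that is non-negative off $I_1$ but strictly negative in the coordinate $j$. Once this relation is found, converting it into a linear functional on $\QQ^n$ via the Gale-dual map $P^*$ is routine, and no appeal to the fan structure or to completeness of the $\Sigma(\tau_i)$ is needed; in particular the argument does not require the cones $\tau_i$ to be full-dimensional.
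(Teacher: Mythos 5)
Your proof is correct, and it takes a genuinely different route from the paper's. The paper's proof of ``$\Rightarrow$'' is structural: since $\tau_i^\circ \subseteq \Mov(S)^\circ$, the cones $P(\gamma_1^*)$ and $P(\gamma_2^*)$ live in fans $\Sigma(\tau_1)$, $\Sigma(\tau_2)$ sharing the one-skeleton given by the pairwise distinct rays $\QQ_{\geq 0}v_1, \dotsc, \QQ_{\geq 0}v_r$, so that $v_i \in P(\gamma_j^*)$ holds if and only if $\QQ_{\geq 0} v_i$ is an extremal ray of $P(\gamma_j^*)$, if and only if $e_i \in \gamma_j^*$; the inclusion of images then forces $\gamma_2^* \subseteq \gamma_1^*$, i.e.\ $\gamma_1 \subseteq \gamma_2$. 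You instead prove the decisive separation statement by hand: for $e_j \in \gamma_1$ you combine the strictly positive representation of a point $u \in \tau_1^\circ \subseteq Q(\gamma_1)^\circ$ over $\{w_i;\, e_i \in \gamma_1\}$ (valid, since the relative interior of a finitely generated cone consists exactly of the strictly positive combinations of its generators) with a nonnegative representation omitting $w_j$, available because $u \in \Mov(S) \subseteq \cone(w_k;\, k \neq j)$; the difference is a relation in $\ker(Q)_\QQ = \im(P^*)_\QQ$, and its preimage $m \in \QQ^n$ is a linear form nonnegative on all generators $v_k$, $k \notin I_1$, of $P(\gamma_1^*)$ but strictly negative on $v_j$, whence $v_j \notin P(\gamma_1^*)$. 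Your deduction of $\gamma_1 \subseteq \gamma_2$ from this is sound, and, as you observe, it never uses relevance of $\gamma_2$, only that $P(\gamma_2^*)$ is generated by the $v_i$ with $e_i \in \gamma_2^*$. The trade-off: the paper's argument is shorter because it quotes the standard description of the fans $\Sigma(\tau)$ (shared one-skeleton, distinct rays) from the surrounding GIT machinery, where the hypothesis on the moving cone is absorbed; your argument is self-contained convex geometry which in effect reproves exactly the piece of Gale duality needed, makes visible where $\tau_1^\circ \subseteq \Mov(S)^\circ$ enters, needs no fan structure or full-dimensionality of the $\tau_i$, and even yields $v_j \neq 0$ as a byproduct of the separating form.
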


\begin{proof}
The implication ``$\Leftarrow$'' is clear.
We show ``$\Rightarrow$''.
Note that the cones $P(\gamma_1^*) \in \Sigma(\tau_1)$
and $P(\gamma_2^*) \in \Sigma(\tau_2)$
both live in lattice fans having precisely $v_1, \dotsc, v_r$
as primitive ray generators. 
Thus for $j = 1, 2$ and any $v_i$ we have
\[
    v_i \in P(\gamma_j^*)
    \; \Longleftrightarrow \; \QQ_{\geq 0}\, v_i
    \text{ is an extremal ray of } P(\gamma_j^*)
    \; \Longleftrightarrow \;
    e_i \in \gamma_j^*.
\]
From this we infer that $P(\gamma_2^*) \subseteq P(\gamma_1^*)$
implies $\gamma_2^* \subseteq \gamma_1^*$.
This in turn means $\gamma_1 \subseteq \gamma_2$.
\end{proof}

Let $\tau^-, \tau^+ \subseteq \QQ^2 = K_\QQ$ be full-dimensional GIT-cones with
$(\tau^-)^\circ, (\tau^+)^\circ \subseteq \Mov(S)^\circ$
intersecting in a common ray $\tau^0 \coloneqq \tau^- \cap \tau^+$.
\begin{center}
\begin{tikzpicture}[scale=0.6]
\coordinate (u1) at (5,1);
\coordinate (u2) at (3,2);
\coordinate (u3) at (1,4);
\coordinate (u4) at (-1,2);
\coordinate (u5) at (-1,1);

\foreach \i in {1,...,5}
{
\path let \p1=(u\i), \n1={veclen(\x1,\y1)} in
	coordinate (v\i) at (${1/\n1}*(u\i)$);
}

\foreach \v[remember=\v as \lastv (initially v2)] in {v2,v3,v4}
	\path[top color=gray!25!, bottom color=gray!60!] 
		(0,0)--($3cm*(\lastv)$)--($3cm*(\v)$)--cycle;

\foreach \v/\dmax in {v2/2,v3/2,v4/1}
{
	\draw[thick,color=black] (0,0) -- ($3cm*(\v)$);
	\foreach \d in {1,...,\dmax}
		\fill (${1cm*((1+(\d-1)*0.5)}*(\v)$) circle (0.5ex);
}

\draw[thick,dashed,color=black] (0,0) -- ($3cm*(v1)$);
\draw[thick,dashed,color=black] (0,0) -- ($3cm*(v5)$);

\draw[color=black] ($2.25cm*0.5*(v2) + 2.25cm*0.5*(v3)$) node {$\tau^-$};
\draw[color=black] ($2.25cm*0.5*(v3) + 2.25cm*0.5*(v4)$) node {$\tau^+$};
\end{tikzpicture}   
\end{center}
Consider the projective toric varieties $Z^0$, $Z^-$, $Z^+$
associated with $\tau^0$, $\tau^-$ and $\tau^+$
and denote $\Sigma^0 = \Sigma(\tau^0)$, $\Sigma^- = \Sigma(\tau^-)$
and $\Sigma^+ = \Sigma(\tau^+)$ for the describing fans.
Moreover the inclusions of the respective semistable points
induce proper birational toric morphims
$\varphi^-: Z^- \rightarrow Z^0$, $\varphi^+: Z^+ \rightarrow Z^0$
described by the refinements of fans $\Sigma^- \preceq \Sigma^0$
and $\Sigma^+ \preceq \Sigma^0$ respectively.
This yields a small birational map $\psi: Z^- \dashrightarrow Z^+$
as shown in the diagram
\[ \xymatrix{
(\KK^r)^{\mathrm{ss}}(\tau^-) \ar@{}[r]|\subseteq \ar[d]_{\quot H} &
(\KK^r)^{\mathrm{ss}}(\tau^0) \ar[d]_{\quot H} &
(\KK^r)^{\mathrm{ss}}(\tau^+) \ar[d]_{\quot H} \ar@{}[l]|\supseteq \\
Z^- \ar[r]^{\varphi^-} \ar@/_1.2em/@{-->}[rr]_\psi &
Z^0 &
Z^+ \ar[l]_{\varphi^+}
} \]

\begin{lemma}
\label{toricflop}
Let $-D$ be an ample divisor on $Z^-$, then 
$D$ regarded as a divisor on~$Z^+$ is $\varphi^+$-ample.
\end{lemma}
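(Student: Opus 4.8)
The plan is to translate $\varphi^+$-ampleness into strict convexity of support functions, and then to exploit that the flop reverses the local subdivision at the contracted wall, so that negating the support function interchanges the two relevant convexity conditions.

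First I would replace $D$ by its torus-invariant $\QQ$-Cartier representative $\sum_i a_i D_i$; this is legitimate, since $Z^+$ is $\QQ$-factorial (the cone $\tau^+$ is full-dimensional, compare \cref{locprops}) and since both absolute and relative ampleness depend only on the divisor class. Recall that the three fans $\Sigma^-, \Sigma^0, \Sigma^+$ share the one-skeleton $v_1, \dotsc, v_r$ and that $\varphi^-, \varphi^+$ are small, so $\Sigma^-$ and $\Sigma^+$ refine $\Sigma^0$ without introducing new rays. Writing $h$ for the support function of $D$, determined by $h(v_i) = a_i$, the support function of $-D$ on $Z^-$ is $-h$. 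By the standard toric criteria, $-D$ is ample on $Z^-$ if and only if $-h$ is strictly convex with respect to $\Sigma^-$, while $D$ is $\varphi^+$-ample if and only if $h$ is strictly convex relative to $\varphi^+$, that is, the restriction of $h$ to each maximal cone $\sigma \in \Sigma^0$ is strictly convex with respect to the subdivision of $\sigma$ induced by $\Sigma^+$.

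Next I would analyse the wall crossing cone by cone. Fix a maximal cone $\sigma \in \Sigma^0$. If $\sigma$ is not subdivided in $\Sigma^+$, then $h|_\sigma$ is linear and the convexity condition is vacuous. Otherwise, because $\rank K = 2$ and $\varphi^-, \varphi^+$ are small, $\sigma$ has relative Picard number one, so it admits exactly two subdivisions using its own rays, namely those induced by $\Sigma^-$ and by $\Sigma^+$; these are precisely the two triangulations attached to a circuit, i.e.\ a minimal affine dependence, among the generators of $\sigma$. The key point is the elementary but crucial observation that these two triangulations arise as the lower hulls determined by a generic height vector and by its negative; consequently, for a piecewise linear function on $\sigma$, strict convexity of $-h|_\sigma$ for the $\Sigma^-$-subdivision is equivalent to strict convexity of $h|_\sigma$ for the $\Sigma^+$-subdivision.

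Finally I would assemble the pieces. Since $-D$ is ample on $Z^-$, the function $-h$ is strictly convex with respect to $\Sigma^-$, hence in particular strictly convex on each subdivided cone $\sigma$ for the $\Sigma^-$-subdivision. The circuit observation then yields strict convexity of $h$ on each such $\sigma$ for the $\Sigma^+$-subdivision, whereas on the undivided cones the condition holds trivially. Thus $h$ is strictly convex relative to $\varphi^+$, which is exactly $\varphi^+$-ampleness of $D$. I expect the main obstacle to be the middle step: verifying that $\Sigma^-$ and $\Sigma^+$ cut each relevant cone of $\Sigma^0$ into exactly the two opposite triangulations of a circuit, and that negating the height interchanges the corresponding strict convexity conditions. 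Equivalently, and perhaps more transparently, one can show that the relative ample cone of $\varphi^+$ is the open half-plane bounded by the line $\mathrm{span}(\tau^0)$ on the side of $\tau^+$; since $\tau^-$ and $\tau^+$ lie on opposite sides of this line, the hypothesis $[-D] \in (\tau^-)^\circ$ gives $[D] \in -(\tau^-)^\circ$, which lies in that half-plane.
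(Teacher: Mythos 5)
Your proposal is correct, but it takes a genuinely different route from the paper. The paper never introduces support functions: it covers $Z^0$ by the affine charts $Z_{\gamma_0}$ attached to the minimal cones of $\rlv(\tau^0)$, notes that over the charts $Z_{\gamma_{i,j}}$ the map $\varphi^+$ is an isomorphism, so the preimages are affine and any $\QQ$-Cartier divisor is ample there, and then, for the charts $Z_{\gamma_j}$ with $w_j \in \tau^0$, computes the preimage $U = \bigcup_{i \geq i^+} Z_{\gamma_{j,i}}$ via \cref{galefiberformula}, identifies $\Cl(U) \cong K/\langle w_j \rangle$ of rank one, and shows $\Ample(U) = \QQ_{>0}$ with sign given by $\det(w_j, \cdot\,)$, so that $[-D] \in (\tau^-)^\circ$ forces $\imath^*[D]$ to be ample. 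Thus the paper argues on the class-group side of Gale duality, whereas you argue on the fan side with piecewise linear functions; your closing half-plane remark is literally what the paper's chart computation establishes. The step you flag as the main obstacle does close: the rays of a subdivided maximal cone $P(\gamma_j^*)$ are the $r-1$ vectors $v_k$, $k \neq j$, spanning a cone of dimension $r-2$, so they carry a unique relation, whose coefficients are $c_k = \det(w_j, w_k)$ up to scalar; $\Sigma^-$ and $\Sigma^+$ induce the two opposite circuit triangulations (omitting a ray with $c_i < 0$ resp.\ $c_i > 0$), and a one-line evaluation shows that strict convexity of the ray values $-a_k$ for the first and of $a_k$ for the second are both equivalent to $\sum_k c_k a_k > 0$. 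Two minor points of hygiene: the relative ampleness criterion you invoke is standard for proper toric morphisms and, applied to the invariant affine cover of $Z^0$, yields exactly the cover-based definition of $\varphi^+$-ampleness used in this paper; and ``the support function of $-D$ on $Z^-$ is $-h$'' holds only on the rays, since the linear interpolations are taken over different fans --- harmless here, as your convexity conditions use ray values only. The trade-off: your wall-crossing argument is more conceptual and portable to higher-rank walls, at the price of importing the toric relative ampleness criterion, while the paper's computation is longer but self-contained within the GIT formalism it has already set up.
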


\begin{proof}
By suitably applying an automorphism of $K$
and relabeling $w_1, \dotsc, w_r \in K$
we achieve counter-clockwise ordering i.e.\
\[
    i \leq j \Longrightarrow \det(w_i, w_j) \geq 0
\]
and $\det(w^-, w^+) \geq 0$ for all $w^- \in \tau^-$,
$w^+ \in \tau^+$.
Moreover, we name the indices of the weights
that approximate $\tau^0$ from the outside
\[
    i^- \coloneqq \max(i; w_i \in \tau^-), \qquad
    i^+ \coloneqq \min(i; w_i \in \tau^+).
\]
The geometric constellation of $w_1, \dotsc, w_r$ in $\QQ^2$
directly yields that the set of minimal cones of $\rlv(\tau^0)$
is
\[
\{ \gamma_i;\; i^- < i < i^+ \}
\cup \{ \gamma_{i,j};\; i \leq i^-,\, j \geq i^+ \},
\]
where
$\gamma_{i_1, \dotsc, i_r} =
\cone(e_{i_1}, \dotsc, e_{i_r}) \preceq \gamma$.
The corresponding cones $P(\gamma_0)^*$ are precisely
the maximal cones of $\Sigma^0$, in particular
the associated toric charts $Z_{\gamma_0}$
form an open affine covering of $Z^0$.
We show that $D$ is ample on each open subset
$(\varphi^+)^{-1}(Z_{\gamma_0})$ of $Z^+$.

First, note that $\varphi^+$ is an isomorphism over the affine toric charts
of $Z^0$ associated with the common minimal cones of $\rlv(\tau^0)$
and $\rlv(\tau^+)$, namely all $Z_{\gamma_{i,j}}$
where $i \leq i^+$ and $j \geq i^+$.
In particular each preimage
$(\varphi^+)^{-1}(Z_{\gamma_{i,j}})$ is affine.
Since $Z^+$ is $\QQ$-factorial by \cref{locprops},
the divisor $D$ is $\QQ$-Cartier thus
restricts to an ample divisor on any open affine subvariety of~$Z^+$.

It remains to consider 
the charts of $Z^0$ defined by
the faces of the form~$\gamma_j$.
Let us fix some $i^- < j < i^+$.
The minimal cones $\gamma_0 \in \rlv(\tau^+)$
with $\gamma_j \subseteq \gamma_0$
are precisely those of the form $\gamma_{j,i}$
where $i \geq j$.
As the toric morphism $\varphi^+$
is described by the refinement $\Sigma^+ \preceq \Sigma^0$,
\cref{galefiberformula} yields
\[
    U \coloneqq (\varphi^+)^{-1}\left( Z_{\gamma_j} \right)
    = \bigcup_{i \geq i^+}  Z_{\gamma_{j,i}} \subseteq Z^+.
\]
Note that $U \subseteq Z^+$ is an open toric subset and
the maximal cones of the associated subfan~$\Sigma'$ of~$\Sigma^+$ 
are precisely the cones $P(\gamma_{j,i}^*)$ where $i \geq i^+$.
This shows that the rays of $\Sigma'$ are the rays of $\Sigma^+$ minus
$\varrho_j$.
Thus the divisor class group of $U$ is given by
$\Cl(U) = K / \langle w_j \rangle$ and the projection
corresponds to the restriction of divisor classes
\[ \xymatrix{
    \Cl(Z^+) \ar[r]^{\imath^*} \ar[d]_{\cong} & \Cl(U) \ar[d]^{\cong} \\
    K \ar[r] &  K / \langle w_j \rangle
} \]
Taking $\rk K = 2$ into account, we may choose suitable coordinates leading to
an isomorphism $\Cl(U)_\QQ \cong \QQ$
such that for any $w \in \Cl(Z^+)$ the restriction $\imath^*(w)$ to $\Cl(U)$
and $\det(w_j, w)$ have the same sign.
Graphically this means that the sign of
$\imath^*(w) \in \Cl(U)$ is positive 
if~$w$ lies above the ray~$\tau^0$ and negative if~$w$ lies
below~$\tau^0$.

Since we know the maximal cones of $\Sigma'$ we may compute the ample cone
of $U$ as 
\[
    \Ample(U)
    = \bigcap_{i \geq i^+} \left(\imath^* \circ Q(\gamma_{j,i})\right)^\circ
    = \QQ_{> 0} \subseteq \QQ = \Cl(U)_\QQ.
\]
Note that $[-D] \in \Ample(Z^-) = \tau^-$ lies below $\tau$, thus
the class of $-D$ (regarded on $Z^+)$ restricted to $U$ is negative,
hence
$\imath^*[D] \in \Ample(U)$.
In other words, $D$ is ample on $U$.
Altogether, we conclude that $D$ is $\varphi^+$-ample.
\end{proof}

\begin{proof}[Proof of \cref{floplemma}]
First, we deal with the case that $\lambda$ and $\eta$ intersect in a common
ray $\varrho \coloneqq \lambda \cap \eta$. Consider a $K$-graded presentation
\[
    R = \mathbb{K}[T_1, \dotsc, T_r] / \mathfrak{a}
\]
where $T_1, \dotsc, T_r$ define pairwise non-associated $K$-primes in $R$
and $\mathfrak{a} \subseteq S = \mathbb{K}[T_1, \dotsc, T_r]$ is
a homogeneous ideal.
The GIT-fan $\Lambda(S)$ w.r.t.\ the $H$-action on $S$
refines the GIT-fan $\Lambda(R)$. We may choose $\tau^+, \tau^- \in \Lambda(S)$
such that
\[
    \left(\tau^-\right)^\circ \subseteq \lambda^\circ  \quad
    \left(\tau^+\right)^\circ \subseteq \eta^\circ, \quad
   \tau^- \cap \tau^+ = \varrho.
\]
The toric morphisms $\varphi_Z^-$, $\varphi^+_Z$ arising from the face relations
$\varrho \preceq \tau^-, \tau^+$ of GIT-cones are compatible with
the toric morphisms $\varphi^-$, $\varphi^+$ arising from $\varrho \preceq \lambda, \eta$
as shown in the following diagram
where the vertical arrows are neat embeddings as in \cref{ambtv}
\[ \xymatrix{
Z(\tau^-) \ar[r]^{\varphi_Z^-} & 
Z(\varrho) &
Z(\tau^+) \ar[l]_{\varphi_Z^+}\\
X(\lambda) \ar[u] \ar[r]^{\varphi^-} & 
X(\varrho) \ar[u] &
X(\eta) \ar[u] \ar[l]_{\varphi^+}
} \]
We claim that the resulting birational map
$\psi: X(\lambda) \dashrightarrow X(\eta)$ is a flop.
First observe that
$X^-$ is $\QQ$ factorial by \cref{locprops}~(i)
and  $\varphi^-, \varphi^+$ are small birational morphisms;
see~\cite{ADHL}*{Rem. 3.3.3.4}

We show that $\varphi^-$ is extremal. 
Let $D_1, D_2$ be Cartier divisors on $X$.
If $D_1, D_2$ lie on a common ray in $\Pic(X)_\QQ = \QQ^2$
we find $a_1, a_2 \in \ZZ$, not both zero, such that $a_1 D_1 - a_2 D_2$ is
principal,
thus linearly equivalent to the pullback of any prinicipal divisor on $Y$.
Otherwise the subgroup $G \subseteq \Pic(X)$
spanned by $[D_1], [D_2] \in \Pic(X)$ is of rank two
and thus of finite index in $\Pic(X)$.
Hence, for any Cartier divisor $C$ on $Y$, we find $b \in \ZZ_{> 0}$
such that $b [(\varphi^-)^*(C)] \in G$, i.e., 
$a_1 [D_1] + a_2[D_2] = b [(\varphi^-)^*(C)]$
for some $a_1, a_2 \in \ZZ$.
In other words, $a_1 D_1 - a_2 D_2$ is linearly equivalent
to the pullback of $b C$.
If $C$ is not principal, then $(\varphi^-)^* C$ is not principal either,
thus at least one of $a_1, a_2$ is non-zero.

Let $D_Z$ be a torus invariant divisor on $Z(\tau^-)$
such that $-D_Z$ is ample for $Z(\tau^-)$.
Since $X(\lambda) \subseteq Z(\tau^-)$ is neatly embedded,
we may restrict $D_Z$ to a divisor $D_X$ on~$X(\lambda)$.
Note that $-D_X$ is ample since $-D_Z$ is so.
In particular $-D_X$ is $\varphi^-$-ample.
\Cref{toricflop} yields that~$D_Z$ is $\varphi_Z^+$-ample.
Let $U \subseteq Z(\varrho)$ be an affine open subset such that
$D_Z$ is ample on
\[
    V \coloneqq \left(\varphi_Z^+ \right)^{-1}(U) \subseteq Z(\tau^+).
\]
The further restriction of $D_Z$ from $V$
to $V \cap X(\eta)$ is still ample.
In other words,~$D_X$ restricted
to $(\varphi^+)^{-1}(X(\tau) \cap U)$ is ample.
We conclude that $D_X$ is $\varphi^+$-ample.

Altogether $\psi: X(\lambda) \dashrightarrow X(\eta)$
is a flop.

In the general case we find full-dimensional GIT-cones
$\lambda = \eta_1, \dotsc, \eta_k = \eta$
where $\eta_i^\circ \subseteq \Mov(R)^\circ$ holds for all $i$
and each intersection $\eta_i \cap \eta_{i+1}$ is a ray of $\Lambda(R)$.
According to the preceeding discussion, we may successively construct
the desired sequence of flops.
\end{proof}

\section{Combinatorial constraints on smooth hypersurface Cox rings}
The proof of \cref{thm:CY3folds}
basically uses the combinatorial framework
for the classification of smooth Mori dream spaces of Picard number two
with hypersurface Cox ring
established in \cite{HLM}*{Sec. 5}; see also \cite{Ma}.
Let us recall the notation from there and slightly extend it to
address the torsion subgroup of the grading group explicitly.
We also present the accompanying toolkit.
Moreover we add some new tools for dealing with torsion.

We work over an algebraically closed field $\KK$ of characteristic zero.

\begin{setting}
\label{setting:rho2forCY}
Consider $K = \ZZ^2 \times \Gamma$
where $\Gamma$ is some finite abelian group of order~$t$,
a $K$-graded algebra $R$
and $X = X(\lambda)$,
where $\lambda \in \Lambda(R)$ with 
$\lambda^\circ \subseteq \Mov(R)^\circ$,
as in Construction~\ref{constr:mdsCY}.
Assume that we have an irredundant 
$K$-graded presentation
$$ 
R 
\ = \ 
R_g 
\ = \ 
\KK[T_1, \ldots, T_r] / \bangle{g}
$$
\index{$R_g$}
such that the $T_i$ define pairwise 
nonassociated $K$-primes in $R$.
Write $w_i \coloneqq \deg(T_i)$,
$\mu \coloneqq \deg(g)$ for the degrees 
in $K$.
According to the presentation $K = \ZZ^2 \times \Gamma$ we denote
\[
	w_i = (u_i, \zeta_i), \quad
	\mu = (\alpha, \theta), \qquad
	u_i, \alpha \in \ZZ^2,\quad
	\zeta_i, \theta \in \Gamma.
\]
Similarly the degree matrix $Q = [w_1, \dotsc, w_r]$
is divided into a free part $Q^0$ and a torsion part $\tor{Q}$,
i.e., we set
\[
	\free{Q} = 
	\begin{bmatrix}
	u_1 & \hdots & u_r
	\end{bmatrix}, \quad
	\tor{Q} = 
	\begin{bmatrix}
	\zeta_1 & \hdots & \zeta_r
	\end{bmatrix}.
\]
Regarded as elements of $K_\QQ$ we identify $w_i$ with $u_i$
and $\mu$ with $\alpha$.
Suitably numbering $w_1, \ldots, w_r$,
we ensure counter-clockwise ordering,
that means that we always have
\[
i \le j \ \implies \ \det(w_i,w_j) = \det(u_i, u_j) \ge 0.
\]
Note that each ray of $\Lambda(R)$ is of 
the form $\varrho_i  = \cone(w_i)$, but not 
vice versa.
We assume $X$ to be $\QQ$-factorial.
According to Proposition~\ref{locprops} 
this means $\dim(\lambda) = 2$.
Then the  effective cone of $X$ is uniquely 
decomposed into three convex sets,
$$
\Eff(X) 
\ = \ 
\lambda^- \cup \lambda^\circ \cup \lambda^+,
$$
where~$\lambda^-$ and~$\lambda^+$ are convex polyhedral 
cones not intersecting
$\lambda^\circ = \Ample(X)$ and 
$\lambda^- \cap \lambda^+$ consists of the origin.
\begin{center}
    \begin{tikzpicture}[scale=0.6]
    \path[fill=gray!60!] (0,0)--(3.5,2.9)--(0.6,3.4)--(0,0);
    \path[fill, color=black] (1.75,1.45) circle (0.5ex)  node[]{};
    \path[fill, color=black] (1.4,2.4) circle (0.0ex)  node[]{\small{$\lambda^\circ$}};
    \path[fill, color=black] (0.3,1.7) circle (0.5ex)  node[]{};
    \draw (0,0)--(0.6,3.4);
    \draw (0,0) --(-2,3.4);
  \path[fill, color=black] (-1,1.7) circle (0.5ex)  node[left]{\small{$w_r$}};
    \path[fill, color=black] (-0.35,2.65) circle (0.0ex)  node[]{\small{$\lambda^+$}};
    \draw (0,0)  -- (3.5,2.9);
    \draw (0,0)  -- (3.5,0.5);
  \path[fill, color=black] (1.75,0.25) circle (0.5ex)  node[below]{\small{$w_1$}};
    \path[fill, color=black] (2.6,1.2) circle (0.0ex)  node[]{\small{$\lambda^-$}};
    \path[fill, color=white] (4,1.9) circle (0.0ex);
  \end{tikzpicture}   
\end{center}
\end{setting}

\begin{remark} 
Setting~\ref{setting:rho2forCY}
is respected by orientation preserving 
automorphisms of~$K$.
If we apply an orientation reversing 
automorphism of $K$, 
then we regain Setting~\ref{setting:rho2forCY}
by reversing the numeration of 
$w_1, \ldots, w_r$.
Moreover, we may interchange the numeration 
of $T_i$ and $T_j$ if $w_i$ and $w_j$
share a common ray without 
affecting \cref{setting:rho2forCY}.
We call these operations \emph{admissible
coordinate changes}. 
\index{admissible coordinate change}
Note that any automorphism of $\ZZ^2$ naturally extends
to an automorphism of $K = \ZZ^2 \times \Gamma$
acting as the identity on $\Gamma$.
\end{remark}

We state an adapted version of \cite{HLM}*{Prop. 2.4}
locating the relation degree.

\begin{proposition}
\label{prop:hypmovforCY}
In the situation of \cref{setting:rho2forCY} we have
$\mu \in \cone(w_3, w_{r-2}) \subseteq K_\QQ$.
\end{proposition}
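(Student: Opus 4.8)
The plan is to pin down $\mu$ by producing, at each end of the weight configuration, a monomial of $g$ whose support avoids the two outermost generators. Concretely, I will first establish the following \emph{key claim}: the polynomial $g$ contains a monomial divisible by neither $T_{r-1}$ nor $T_r$, that is, a monomial lying in $\KK[T_1,\dots,T_{r-2}]$. Since every monomial $T^\nu$ occurring in $g$ satisfies $\mu=\sum_i\nu_i w_i$, such a monomial immediately gives $\mu\in\cone(w_1,\dots,w_{r-2})$. By the counter-clockwise ordering we have $\det(w_i,w_{r-2})\ge0$ for all $i\le r-2$, so this cone lies in the half-plane $\{v\in K_\QQ : \det(v,w_{r-2})\ge 0\}$, whence $\det(\mu,w_{r-2})\ge0$. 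Applying the key claim to the reversed data produced by the orientation-reversing admissible coordinate change from the remark following \cref{setting:rho2forCY} (which sends $w_i\mapsto w_{r+1-i}$) yields a monomial of $g$ in $\KK[T_3,\dots,T_r]$, hence $\det(w_3,\mu)\ge0$. As $\Eff(R)=\cone(w_1,w_r)$ is pointed, these two sign conditions together with $3\le r-2$ locate $\mu$ in $\cone(w_1,w_{r-2})\cap\cone(w_3,w_r)=\cone(w_3,w_{r-2})$, which is the assertion.

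It remains to prove the key claim, and this is where the hypotheses on $R_g$ enter. I argue by contradiction, assuming that every monomial of $g$ is divisible by $T_{r-1}$ or by $T_r$. Since $R_g$ is an integral domain, $g$ is irreducible and is not associated to $T_r$, so $g\notin\langle T_r\rangle$; hence the restriction $g|_{T_r=0}$ is a nonzero polynomial, all of whose monomials are divisible by $T_{r-1}$. Write $g|_{T_r=0}=T_{r-1}h$. Because $T_r$ is $K$-prime, the ideal $\langle T_r\rangle$ is $K$-prime and $R_g/\langle T_r\rangle\cong\KK[T_1,\dots,T_{r-1}]/\langle T_{r-1}h\rangle$ has no homogeneous zero divisors; as $T_{r-1}$ and $h$ are homogeneous, this forces $h$ to be a unit. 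Thus $g|_{T_r=0}=c\,T_{r-1}$ with $c\in\KK^\times$, and so $\mu=w_{r-1}$.

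Running the identical argument with the roles of $T_{r-1}$ and $T_r$ interchanged gives $g|_{T_{r-1}=0}=c'T_r$ with $c'\in\KK^\times$ and $\mu=w_r$, so that $w_{r-1}=w_r=\mu$ in $K$. By pointedness of the grading, the only monomials of degree $\mu=w_{r-1}=w_r$ divisible by $T_{r-1}$ or $T_r$ are $c\,T_{r-1}$ and $c'T_r$ themselves, since any additional factor would have degree $0$ and hence be constant. Therefore $g=c\,T_{r-1}+c'T_r$, and consequently $T_{r-1}=-(c'/c)\,T_r$ in $R_g$, contradicting that $T_{r-1}$ and $T_r$ are non-associated. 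This contradiction proves the key claim.

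The main obstacle is precisely this last step: ruling out the degenerate configurations in which the support of $g$ collapses onto the outer two generators. The delicacy is that $K$-primality only forbids \emph{homogeneous} zero divisors, so one cannot conclude from a single outer variable; instead one must invoke $K$-primality at both $T_{r-1}$ and $T_r$ simultaneously and then use pointedness and non-associatedness to force the forbidden linear relation $g=c\,T_{r-1}+c'T_r$. Once the key claim is secured, the passage to $\cone(w_3,w_{r-2})$ is purely combinatorial.
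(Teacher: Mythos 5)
Your proposal is correct, and the two small compressions are harmless: in the homogeneous-zero-divisor step one must also rule out $\bar h = 0$ in $R_g/\langle T_r\rangle$, but that case would give $T_{r-1}h \mid h$ in $\KK[T_1,\dots,T_{r-1}]$, forcing $h=0$ and contradicting $g|_{T_r=0}\neq 0$, so only the case ``$h$ a unit'' survives as you assert. The paper itself gives no proof of this proposition but imports it from \cite{HLM}*{Prop.~2.4}, and your argument is essentially the same mechanism used there: if $\mu$ escaped $\cone(w_3,w_r)$ (resp.\ $\cone(w_1,w_{r-2})$), every monomial of $g$ would be divisible by one of the two outermost variables, which the $K$-primality of those variables, pointedness of the grading and their non-associatedness jointly forbid; intersecting the two resulting cones inside the pointed cone $\Eff(R)$ then yields $\mu\in\cone(w_3,w_{r-2})$.
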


A further important observation is that
the GIT-fan structure of $R_g$ can be read of
from the geometric constellation of
$w_1, \dotsc, w_r$ and $\mu$.

\begin{proposition} \label{cor:gitconeCY}
Situation as in \cref{setting:rho2forCY}.
Assume that $X(\lambda)$ is locally factorial
and $R$ is a spread hypersurface Cox ring.
Then the full-dimensional cones of $\Lambda(R)$
are precisely the cones $\eta = \cone(w_i, w_j)$
where $\varrho_i \neq \varrho_j$
and one of the following conditions is satisfied:

\begin{enumerate}
\item
$\mu \in \varrho_i$ holds, $\varrho_i$ contains at least
two generator degrees and
$\eta^\circ$ contains no generator degree,

\item
$\mu \in \varrho_j$ holds, $\varrho_j$ contains at least
two generator degrees and $\eta^\circ$
contains no generator degree,

\item
$\mu \in \eta^\circ$ holds and there is at most one $w_k \in \eta^\circ$,
which lays on the ray through $\mu$,

\item
$\mu \notin \eta$ holds and $\eta^\circ$ contains no generator degrees.
\end{enumerate}
\end{proposition}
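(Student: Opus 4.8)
The plan is to read off $\Lambda(R_g)$ from the toric GIT-fan $\Lambda(S)$ of the polynomial ring $S=\KK[T_1,\dots,T_r]$. Since $R_g$ and $S$ share the $K$-prime generators $T_1,\dots,T_r$, we have $\Eff(R_g)=\Eff(S)$ and $\Mov(R_g)=\Mov(S)$, and as recalled in \cref{ambtv} the fan $\Lambda(S)$ refines $\Lambda(R_g)$. Thus every full-dimensional cone of $\Lambda(R_g)$ is a union of chambers of $\Lambda(S)$, and the task reduces to deciding which walls of $\Lambda(S)$ inside $\Mov(S)^\circ$ survive in $\Lambda(R_g)$. In rank two these walls are precisely the rays $\varrho_k=\cone(w_k)$ meeting $\Mov(S)^\circ$, and by \cref{prop:hypmovforCY} the ray through $\mu$ lies in this interior region as well; so I would determine, ray by ray, when crossing $\varrho_k$ changes the semistable locus of $\bar X = \V(g)$.

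The decisive tool is that $\Lambda(R_g)$ is governed by the orbit cones realized by $\bar X$, namely the cones $\cone(w_i;\, i\in I)$ for which $\V(g)$ meets the torus orbit $\{x;\ x_i\neq 0 \Leftrightarrow i\in I\}$. Here the spread hypothesis enters: a degree-$\mu$ monomial supported on $I$ occurs in $g$ with nonzero coefficient if and only if such a monomial exists at all, because in a convex combination of exponent vectors a coordinate vanishes only if it vanishes in every contributing term. Consequently $g$ restricts to the zero function on that orbit exactly when $\mu\notin\cone(w_i;\,i\in I)$, it restricts to a single nowhere-vanishing monomial exactly when $\mu$ admits a unique nonnegative representation over $\{w_i;\,i\in I\}$, and it meets the orbit in all remaining cases. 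In short, $\cone(w_i;\,i\in I)$ is a realized orbit cone precisely when $\mu$ does \emph{not} admit a unique such representation.

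With this dictionary I would establish the single criterion that a wall $\varrho_k\subseteq\Mov(S)^\circ$ survives in $\Lambda(R_g)$ if and only if $\mu\notin\varrho_k$ or $\varrho_k$ carries at least two of the generator degrees. The three cases are checked directly by exhibiting, or ruling out, a two-dimensional realized orbit cone having $\varrho_k$ as an extremal ray: if $\mu\notin\varrho_k$, then the cone spanned by the degrees on $\varrho_k$ together with any single degree on the side of $\varrho_k$ away from $\mu$ carries no degree-$\mu$ monomial and is therefore realized; if $\mu\in\varrho_k$ and a second degree lies on $\varrho_k$, one produces at least two degree-$\mu$ monomials by trading between the two collinear degrees, so the spanning cone is again realized; and if $\mu\in\varrho_k$ with $w_k$ the only degree on $\varrho_k$, then every orbit cone with $\varrho_k$ extremal forces the unique representation $\mu=m\,w_k$, so none is realized and the wall collapses. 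Local factoriality underpins this orbit-cone bookkeeping and, crucially, controls the torsion part, since all representations above must match in $K=\ZZ^2\times\Gamma$ and not merely in $K_\QQ$.

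Finally I would assemble the chambers. Since $\mu$ spans a single ray, at most one wall can collapse, namely a single-degree ray through $\mu$, while every other wall of $\Lambda(S)$ in $\Mov(S)^\circ$ persists. This yields the stated list at once: a chamber $\eta$ with $\mu\in\eta^\circ$ arises by merging the two $\Lambda(S)$-chambers across a collapsed ray, so $\eta^\circ$ contains at most one generator degree and it must lie on the ray through $\mu$, giving~(iii); chambers abutting a surviving multi-degree ray through $\mu$ give~(i) and~(ii); and every chamber with $\mu\notin\eta$ is simply a surviving $\Lambda(S)$-chamber and hence has no degree in its interior, giving~(iv). The step I expect to be the main obstacle is the trichotomy of the second paragraph: making rigorous that a nonmonomial restriction really acquires a zero on the (possibly low-dimensional) orbit, and that the unique-representation case is detected correctly in the presence of torsion, requires careful analysis of the fiber polytope of degree-$\mu$ monomials, and it is exactly here that both the spread hypothesis and local factoriality are used in full force.
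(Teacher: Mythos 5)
First, a point of order: the paper contains no proof of this proposition. It is stated as part of the toolkit recalled (in slightly extended form) from \cite{HLM}*{Sec.~5} and \cite{Ma}, so your proposal can only be measured against that source's method and against correctness. Your architecture is in fact the natural one and matches the cited framework: pass to the refinement $\Lambda(S)\preceq\Lambda(R_g)$, decide wall by wall which rays $\varrho_k$ survive, and control survival through the orbit cones realized by $V(g)$, where the spread hypothesis correctly yields that some degree-$\mu$ monomial supported in $I$ occurs in $g$ if and only if one exists at all. Your reduction ``the wall $\varrho_k$ survives iff some two-dimensional realized orbit cone has $\varrho_k$ as an extremal ray'' is sound, and one of your two announced worries is unfounded: a restriction of $g$ to a torus orbit with at least two surviving monomials is a non-unit Laurent polynomial and hence vanishes somewhere on the orbit, since $\KK$ is algebraically closed.

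Your other worry, however, marks a genuine gap, and it sits exactly where you defer the work. In the collapse case ($\mu\in\varrho_k$ with $w_k$ the only degree on the ray) you assert that every orbit cone with $\varrho_k$ extremal ``forces the unique representation $\mu=m\,w_k$, so none is realized.'' This presupposes that such a representation exists in $K$, i.e.\ that $m$ is a positive integer and the torsion parts match, $\theta=m\zeta_k$ in $\Gamma$; the hypothesis $\mu\in\varrho_k$ only gives $\alpha=c\,u_k$ with $c\in\QQ_{>0}$. If no integral, torsion-compatible representation exists, then by your own dictionary there is no degree-$\mu$ monomial supported in $\{k,j\}$, so $g$ vanishes identically on the corresponding orbit, the two-dimensional cone $\cone(w_k,w_j)$ \emph{is} realized, and the wall survives, which is the opposite of your conclusion and incompatible with case~(iii) of the statement. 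This is precisely where local factoriality must do concrete work rather than ``underpin the bookkeeping'': the missing step is \cref{lem:raypowerCY}, which rests on \cref{lem:twofacesCY} (when $\lambda\subseteq\cone(w_k,w_j)$ for some $j$: either $w_k,w_j$ generate $K$, forcing $K\cong\ZZ^2$ and an integral relation $\mu=m\,w_k$ since $u_j$ lies off the line through $u_k$, or $g$ has a monomial $T_k^{l_k}T_j^{l_j}$ and the halfplane projection forces $l_j=0$), resp.\ on \cref{lem:threegenerateCY}(iii) when $w_k\in\lambda^\circ$.

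With that ingredient inserted your criterion and the final assembly of chambers go through, and the remaining blemishes are patchable in a line each: the clause ``$g$ restricts to the zero function exactly when $\mu\notin\cone(w_i;\,i\in I)$'' must read ``when $\mu$ admits no nonnegative integral representation over $(w_i)_{i\in I}$ in $K$'' (harmless in your surviving-wall cases, since identical vanishing also realizes the cone); and your ``trading'' argument on a two-degree ray misses the subcase where exactly one of the collinear degrees represents $\mu$ integrally, in which realization comes from identical vanishing on the cone spanned by the other collinear degree and an off-ray degree, not from two monomials. Finally, your analysis is confined to walls in $\Mov(S)^\circ$, while the statement describes all full-dimensional cones of $\Lambda(R)$; this extension is immediate, since $\mu$ lies in $\Mov(R)$ by \cref{prop:hypmovforCY}, so every wall in $\Eff(R)^\circ\setminus\Mov(R)^\circ$ survives by your first case.
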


Recall that a point $x \in X$ of a variety $X$
is \emph{factorial} if its stalk $\mathcal{O}_{X,x}$
admits unique factorization. We call $X$ \emph{locally factorial}
if every point $x \in X$ is factorial.
In particular smooth varieties are locally factorial.

The following lemmas are crucial in
gaining constraints on specifying data
of hypersurface Cox rings.

\begin{lemma} 
\label{lem:twofacesCY} 
Situation as in \cref{setting:rho2forCY}.
Let $i, j$ with $\lambda \subseteq \cone(w_i,w_j)$.
If $X = X(\lambda)$ is locally factorial,
then either $w_i,w_j$ generate $K$ as a group,
or $g$ has precisely one monomial of the form 
$T_i^{l_i} T_j^{l_j}$, where $l_i+l_j > 0$.
\end{lemma}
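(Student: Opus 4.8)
The plan is to localize the whole question at the single point of $X$ cut out by the two-dimensional cone $\cone(w_i,w_j)\supseteq\lambda$ and to read the two alternatives off its local divisor class group. First I would set $P_{ij}=\{z\in\KK^r:\ z_k=0 \text{ for } k\neq i,j\}$ and $P_{ij}^\circ=\{z\in P_{ij}:\ z_iz_j\neq 0\}$. On $P_{ij}^\circ$ every monomial of $g$ carrying a factor $T_k$ with $k\neq i,j$ vanishes, so $g|_{P_{ij}^\circ}$ collects exactly the monomials of the pure form $T_i^{l_i}T_j^{l_j}$. Since $\lambda\subseteq\cone(w_i,w_j)$ is full-dimensional, each $z\in P_{ij}^\circ$ has orbit cone $\cone(w_i,w_j)\supseteq\lambda$ and is $\lambda$-semistable, and since $w_i,w_j$ span $K_\QQ$ the quasitorus $H$ acts transitively on $P_{ij}^\circ$; its image is therefore a single point $z_{ij}$, which lies on $X$ exactly when $g$ has no monomial $T_i^{l_i}T_j^{l_j}$. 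Uniqueness of such a monomial is then immediate: $w_i,w_j$ are $\QQ$-linearly independent, so $l_iw_i+l_jw_j=\mu$ has at most one solution, and $\mu\neq 0$ (otherwise $g$ would be a homogeneous unit, against irredundancy) forces $l_i+l_j>0$.

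With this reduction the statement becomes a dichotomy for $z_{ij}$. If $w_i,w_j$ generate $K$ the first alternative holds and there is nothing more to do. Otherwise $G\coloneqq K/\bangle{w_i,w_j}$ is nontrivial---finite because $w_i,w_j$ span $K_\QQ$---and it suffices to show $z_{ij}\notin X$, for then the previous paragraph supplies the unique monomial of the second alternative. I would argue by contradiction, assuming $z_{ij}\in X$. The isotropy group of $z_{ij}$ in $H$ is $\widehat G=\Hom(G,\KK^*)$, so $z_{ij}$ is a quotient singularity, and the ambient toric variety $Z$ of \cref{ambtv} is singular at $z_{ij}$ with local class group $K/\bangle{w_i,w_j}=G\neq 0$.

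To finish I would compute the local class group of the hypersurface itself. Slicing the orbit by $\{T_i=T_j=1\}$ presents a neighbourhood of $z_{ij}$ in $X$ as $W/\widehat G$, where $W=\bar X\cap\{T_i=T_j=1\}=\{\tilde g=0\}\subseteq\KK^{r-2}$ with $\tilde g=g|_{T_i=T_j=1}$ and $\widehat G$ acting diagonally through the residue characters of the $w_k$, $k\neq i,j$; accordingly $\mathcal O_{X,z_{ij}}=(\mathcal O_{W,0})^{\widehat G}$. Because the ray generators $v_k=P(e_k)$ are primitive and the $T_k$ are $K$-prime, the $\widehat G$-action is free in codimension one, the quotient map is quasi-\'etale, and hence $\Cl(\mathcal O_{X,z_{ij}})\cong\Hom(\widehat G,\KK^*)=G\neq 0$. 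This contradicts local factoriality of $X$, so $z_{ij}\notin X$ after all. In practice this last computation is exactly the content of the local factoriality criteria \citelist{\cite{ADHL}*{Cor. 3.3.1.9} \cite{ADHL}*{Cor 3.3.1.12}}, which I would invoke directly in place of the slice argument.

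The hard part is precisely this final step: controlling the class group of the hypersurface $X$---rather than of the ambient toric variety $Z$---at the singular point $z_{ij}$, i.e.\ verifying that passing to the slice $W$ loses none of $G$ (equivalently, that $\widehat G$ still acts freely in codimension one on $W$). This is where the hypersurface hypothesis and the $K$-primality of $T_1,\dots,T_r$ genuinely enter, and where torsion in $K$ is absorbed uniformly through $\widehat G$. For the intended application to smooth $X$ the step is essentially free: by \cref{locprops}(ii) smoothness forces $X\subseteq Z^{\reg}$, and since $z_{ij}\in Z\setminus Z^{\reg}$ whenever $w_i,w_j$ fail to generate $K$, one gets $z_{ij}\notin X$ at once.
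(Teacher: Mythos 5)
Your proposal is correct and, in its essential mechanism, coincides with the argument behind this lemma in \cite{HLM}*{Sec.~5}, from which the present paper imports the statement without reproving it: one looks at the distinguished point $z_{ij}$ obtained as the quotient of the torus orbit $P_{ij}^\circ$, observes that $z_{ij}$ lies on $X$ precisely when $g$ has no monomial $T_i^{l_i}T_j^{l_j}$ (with uniqueness and $l_i+l_j>0$ forced by linear independence of $w_i,w_j$ and $\mu\neq 0$, exactly as you argue), and then rules out $z_{ij}\in X$ when $w_i,w_j$ fail to generate $K$ by computing the local class group $\Cl(X,z_{ij})=K/\langle w_i,w_j\rangle\neq 0$, contradicting local factoriality.

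Two caveats on your execution of the last step. First, your hand-rolled slice computation hinges on the assertion that $\widehat G$ acts freely in codimension one on $W$, and the justification you give (primitivity of the $v_k$ and $K$-primality of the $T_k$) does not actually establish this. What almost-freeness of the grading gives you is triviality of the isotropy on the strata of $W$ where at most one coordinate vanishes; to finish you must still exclude divisorial strata of $W$ along which two coordinates vanish, i.e.\ the possibility $g\in\langle T_k,T_l\rangle$ with $K/\langle w_m;\,m\neq k,l\rangle\neq 0$, and your argument is silent on this. Second, the references you fall back on are slightly off target: \cite{ADHL}*{Cor.~3.3.1.9} and \cite{ADHL}*{Cor.~3.3.1.12} are the $\QQ$-factoriality and smoothness criteria of \cref{locprops}; the tool that genuinely replaces your slice argument is ADHL's description of local class groups of varieties with Cox ring presentation (the results immediately preceding those corollaries), which directly yields $\Cl(X,z_{ij})=K/\langle w_i,w_j\rangle$ at the point with closed orbit. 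Since you explicitly propose invoking that criterion in place of the slice argument, the proof stands; your closing observation that for smooth $X$ the conclusion is immediate from $X\subseteq Z^{\reg}$ together with $z_{ij}\in Z\setminus Z^{\reg}$ is also correct and suffices for all applications in this paper, though the lemma as stated requires the locally factorial case.
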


\begin{lemma} 
\label{lem:threegenerateCY}
Let $X = X(\lambda)$ be as in Setting~\ref{setting:rho2forCY}
and let $1 \leq i < j < k \leq r$.
If $X$ is locally factorial, 
then $w_i, w_j, w_k$ generate $K$ as a group
provided  that one of the following holds:
\begin{enumerate}
\item 
$w_i, w_j \in \lambda^-$, $w_k \in \lambda^+$ and $g$ 
has no monomial of the form $T_k^{l_k}$,
\item 
$w_i \in \lambda^-$, $w_j, w_k \in \lambda^+$ and $g$ 
has no monomial of the form $T_i^{l_i}$,
\item
$w_i \in \lambda^-$, $w_j \in \lambda^\circ$, $w_k \in \lambda^+$.
\end{enumerate}

Moreover, if (iii) holds,
then $g$ has a monomial of the form $T_j^{l_j}$
where $l_j$ is divisible by the order of the
factor group $K / \langle w_i, w_k \rangle$.
In particular $l_j$ is a multiple of $\det(u_i, u_k)$.
\end{lemma}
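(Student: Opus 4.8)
The plan is to deduce the generation of $K$ from the local divisor class group of $X$ at a carefully chosen point, mirroring and extending the two–weight argument behind \cref{lem:twofacesCY}. The engine is the local description supplied by the Cox construction: for $x \in X$ lying over the stratum $O_{I_0} = \{z \in \KK^r;\, z_s \neq 0 \Leftrightarrow s \in I_0\}$, with $\gamma_0 = \cone(e_s;\, s \in I_0)$ a relevant face and $\bar X = V(g)$ smooth at the preimage of $x$, the local class group is $\Cl(\mathcal{O}_{X,x}) = K / \langle w_s;\, s \in I_0 \rangle$ (this is the local class group machinery of \cite{ADHL} underlying \cref{locprops}). Consequently, whenever I can exhibit a semistable smooth point of $\bar X$ whose non‑vanishing coordinates are exactly $T_i, T_j, T_k$, local factoriality of $X$ forces the triviality of $K/\langle w_i, w_j, w_k\rangle$, that is $\langle w_i, w_j, w_k\rangle = K$. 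So the whole task reduces to producing such a point.

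In all three cases the positional hypotheses make the extreme pair bracket $\lambda$, so that $\lambda \subseteq \cone(w_i, w_k) \subseteq \cone(w_i, w_j, w_k)$ and the face $\gamma_0 = \cone(e_i, e_j, e_k)$ is relevant, i.e. its stratum $O_{\{i,j,k\}}$ yields points of $X$. A general point of $O_{\{i,j,k\}} \cap \bar X = V(g|_{O_{\{i,j,k\}}})$ is a smooth point of $\bar X$ as soon as this intersection is nonempty, which happens precisely when $g|_{O_{\{i,j,k\}}}$, the sum of the monomials of $g$ supported on $\{T_i, T_j, T_k\}$, is not a single monomial. First I would dispose of the trivial subcase in which one bracketing pair already generates $K$. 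Otherwise \cref{lem:twofacesCY}, applied to each bracketing pair, produces a unique monomial of $g$ supported on that pair. In case (i) these are $T_i^{a} T_k^{c}$ and $T_j^{b'} T_k^{c'}$, and the exclusion of a pure power $T_k^{l_k}$ forces $a \geq 1$ and $b' \geq 1$, so they are distinct monomials supported on $\{T_i, T_j, T_k\}$; case (ii) is symmetric, using the pairs $(i,j)$, $(i,k)$ and the exclusion of $T_i^{l_i}$. In either case $g|_{O_{\{i,j,k\}}}$ is not a monomial and the desired smooth point exists, giving $\langle w_i,w_j,w_k\rangle = K$.

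For case (iii) the required second monomial comes from $w_j$ being ample. Since $w_j \in \lambda^\circ$ lies in the interior of the maximal cone $\lambda \in \Lambda(R_g)$, the ray $\varrho_j$ is not a wall of $\Lambda(R_g)$; by the wall‑crossing description of the GIT‑fan this forces the point $e_j$ to lie off $\bar X$, equivalently $g$ to contain a pure power $T_j^{l_j}$ with $\mu = l_j w_j \in \varrho_j$. Together with the monomial $T_i^{a} T_k^{c}$ delivered by \cref{lem:twofacesCY} (when $\langle w_i, w_k\rangle \neq K$) this again makes $g|_{O_{\{i,j,k\}}}$ non‑monomial, so $\langle w_i, w_j, w_k\rangle = K$. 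The divisibility assertion is then immediate: from $l_j w_j = \mu = a w_i + c w_k$ one gets $l_j [w_j] = 0$ in $K/\langle w_i, w_k\rangle$, while $\langle w_i, w_j, w_k\rangle = K$ makes $[w_j]$ a generator of the finite group $K/\langle w_i, w_k\rangle$; hence $l_j$ is a multiple of $|K/\langle w_i, w_k\rangle|$, which surjects onto $\ZZ^2/\langle u_i, u_k\rangle$ and is therefore a multiple of $\det(u_i, u_k)$.

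The step I expect to be the main obstacle is the first one: rigorously justifying the local class group formula at a hypersurface (non‑toric) point of $X$, and in particular guaranteeing that $O_{\{i,j,k\}} \cap \bar X$ genuinely contains a point at which $\bar X$ is smooth and the finite stabilizer acts without pseudo‑reflections on the transverse directions, so that factoriality really translates into $\langle w_i,w_j,w_k\rangle = K$ rather than a strictly weaker condition. The wall‑crossing input producing the pure power $T_j^{l_j}$ in case (iii) is the secondary point requiring care, since it is exactly what allows a generator degree to sit in the interior of a chamber of $\Lambda(R_g)$.
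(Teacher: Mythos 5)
Your proposal is, in substance, the argument behind this lemma: the paper states it without proof, importing it from \cite{HLM}*{Sec.~5} (see also \cite{Ma}), and the proof there runs along exactly your lines --- translate local factoriality into vanishing of the local class groups $\Cl(X,x)\cong K/K_x$ with $K_x=\langle w_s;\ \bar x_s\neq 0\rangle$, then manufacture a semistable point $\bar x\in\bar X$ of support $\{i,j,k\}$ by checking that the part of $g$ supported on $T_i,T_j,T_k$ is not a single monomial, the two distinct monomials being supplied by \cref{lem:twofacesCY} applied to the bracketing pairs (your positional checks, e.g.\ $\lambda\subseteq\cone(w_i,w_k)$ and hence relevance of $\cone(e_i,e_j,e_k)$, are correct in all three cases, as is the exponent bookkeeping $a\geq 1$, $b'\geq 1$ that makes the monomials distinct). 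The divisibility claim in (iii) is also finished correctly: $l_j[w_j]=[\mu]=0$ in $K/\langle w_i,w_k\rangle$, a finite group generated by $[w_j]$ once the triple generates $K$, and it surjects onto $\ZZ^2/\langle u_i,u_k\rangle$.

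Two patches make the write-up airtight. First, the smoothness requirement on $\bar X$ at $\bar x$ --- which you flag yourself as the main obstacle --- should simply be dropped: your construction does not guarantee it (the restriction $g|$ may be non-reduced, and nothing forces the transverse partials of $g$ to survive at a chosen zero), but none of it is needed. The isomorphism $\Cl(X,x)\cong K/K_x$, with $\bar x$ the closed-orbit point of the fiber, holds at \emph{every} point solely because $R$ is factorially $K$-graded, which is part of $R$ being an abstract Cox ring in \cref{setting:rho2forCY}; see \cite{ADHL}*{Prop.~3.3.1.5} and the ensuing local factoriality criterion. This also disposes of the pseudo-reflection worry, and of the closed-orbit issue: if your $\bar x$ fails to have closed orbit, the closed orbit in its fiber has support contained in $\{i,j,k\}$, yielding the conclusion a fortiori. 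Second, in case (iii) the ``$\varrho_j$ is not a wall'' phrase should be replaced by the orbit-cone argument it abbreviates: if $g$ had no monomial $T_j^{l_j}$, then $e_j\in\bar X$ and $\varrho_j=\omega(e_j)$ would be an orbit cone of $\bar X$; choosing $u\in\varrho_j\cap\lambda^\circ$ (non-empty since $w_j\in\lambda^\circ$), the GIT cone of $u$ --- the intersection of all orbit cones containing $u$ --- would lie inside the ray $\varrho_j$, contradicting the fact that this cone is the two-dimensional $\lambda$. Hence $g(e_j)\neq 0$, i.e.\ $g$ contains a pure power $T_j^{l_j}$ with $\mu=l_jw_j$, exactly as you use it.
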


\begin{lemma} 
\label{lem:2on1ray}
Assume $u, w_1, w_2$ generate the abelian group $\mathbb{Z}^2$.
If $w_i = a_i w$ holds with a primitive $w \in \mathbb{Z}^2$
and $a_i \in \mathbb{Z}$,
then $(u, w)$ is a basis for $\mathbb{Z}^2$ 
and $u$ is primitive. 
\end{lemma}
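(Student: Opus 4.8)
The plan is to reduce the statement to the single fact that $u$ and $w$ together generate $\ZZ^2$, from which both conclusions follow at once. First I would record how the hypothesis $w_i = a_i w$ constrains the subgroup spanned by $w_1$ and $w_2$: since both lie on the line $\ZZ w$, we have $\langle w_1, w_2 \rangle = \langle a_1 w, a_2 w\rangle = \gcd(a_1, a_2)\,\ZZ w$. Writing $d \coloneqq \gcd(a_1, a_2)$, the generation hypothesis $\langle u, w_1, w_2\rangle = \ZZ^2$ then becomes $\langle u, d w\rangle = \ZZ^2$.

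The key observation is then purely a containment. Because $d w \in \ZZ w \subseteq \langle u, w\rangle$, we get $\ZZ^2 = \langle u, dw\rangle \subseteq \langle u, w\rangle \subseteq \ZZ^2$, forcing $\langle u, w\rangle = \ZZ^2$. I would stress that this step deliberately avoids pinning down the value of $d$ (though in fact $d = 1$ must hold); the inclusion alone does the work, which is what keeps the argument short.

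Finally I would invoke the standard fact that any two elements generating $\ZZ^2$ form a basis: the endomorphism of $\ZZ^2$ sending the standard basis to $(u, w)$ is surjective, and a surjective endomorphism of a finitely generated free $\ZZ$-module is an isomorphism, so $\det(u, w) = \pm 1$ and $(u, w)$ is a $\ZZ$-basis. In particular $u$, being a member of a basis, is primitive.

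I expect no genuine obstacle here; the only things to get right are the bookkeeping $\langle a_1 w, a_2 w\rangle = \gcd(a_1, a_2)\,\ZZ w$ and the recognition that the containment $\langle u, dw\rangle \subseteq \langle u, w\rangle$ already yields everything, rather than first establishing $d = 1$. If one prefers an explicitly computational variant, one could instead extend the primitive $w$ to a basis $(w, v)$, write $u = p w + q v$, and read off from $\det\!\bigl(\begin{smallmatrix} p & d \\ q & 0 \end{smallmatrix}\bigr) = -dq = \pm 1$ that $d = 1$ and $q = \pm 1$; but the containment argument is cleaner and is the route I would take.
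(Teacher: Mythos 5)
Your proof is correct. Note that the paper itself states this lemma without proof, importing it from the toolkit of \cite{HLM}*{Sec. 5}, so there is no in-paper argument to compare against; your containment argument $\ZZ^2 = \langle u, w_1, w_2\rangle = \langle u, \gcd(a_1,a_2)\,w\rangle \subseteq \langle u, w\rangle$ together with the standard fact that two generators of $\ZZ^2$ form a basis is exactly the natural route and is complete. One small observation: your argument never actually uses the primitivity of $w$ --- it comes out as a consequence, since any member of a basis of $\ZZ^2$ is primitive --- so the hypothesis is redundant for the proof, being present in the lemma only because $w$ is in practice chosen as the primitive generator of a ray.
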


Now we present some structural observations
which prove useful at different places
inside the proof of \cref{thm:CY3folds}
when we deal with specific configurations of
generator and relation degrees. 

\begin{lemma} 
\label{lem:raypowerCY}
In Setting~\ref{setting:rho2forCY},
assume that $X = X(\lambda)$ is locally factorial
and $R_g$ a spread hypersurface Cox ring.
If $w_i$ lies on the ray through $\mu$,
then $g$ has a monomial of the form $T_i^{l_i}$
where $l_i \ge 2$.
\end{lemma}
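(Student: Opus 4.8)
The plan is to reduce the statement to the single arithmetic fact that $\mu$ is a positive integer multiple of $w_i$ in $K$, and then to extract the monomial $T_i^{l_i}$ from $g$ via spreadness. Write $\varrho = \cone(w_i) \subseteq K_\QQ$ for the common ray of $w_i$ and $\mu$, let $w \in \ZZ^2$ be its primitive generator and set $\psi \coloneqq \det(w, \cdot) \colon \ZZ^2 \to \ZZ$. For every exponent vector $a = (a_1, \dotsc, a_r)$ of a monomial of degree $\mu$ we have $\sum_k a_k \psi(u_k) = \psi(\alpha) = 0$, and $\psi(u_k) = 0$ holds exactly for the generators lying on $\varrho$; collect their indices in $I_0 = \{k : w_k \in \varrho\} \ni i$ and write $u_k = c_k w$ with $c_k \in \ZZ_{>0}$ for $k \in I_0$. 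Consequently the linear form $\Phi(a) \coloneqq \sum_k a_k \lvert \psi(u_k)\rvert$ is nonnegative on all such $a$ and vanishes precisely on the exponent vectors supported on $I_0$.

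The bound $l_i \ge 2$ is the easy part: since the presentation in \cref{setting:rho2forCY} is irredundant, its defining relation lies in the square of the irrelevant ideal $\mathfrak{m} = \langle T_1, \dotsc, T_r\rangle$, so $g$ has no monomial of total degree $\le 1$; hence any pure power $T_i^{l_i}$ occurring in $g$ automatically satisfies $l_i \ge 2$. It remains to produce such a monomial. Granting for the moment that $l_i w_i = \mu$ holds in $K$ for some $l_i \ge 1$, I would argue as follows that $T_i^{l_i}$ actually appears in $g$. By spreadness the Newton polytope of $g$ equals the convex hull of the exponent vectors of all monomials of degree $\mu$, and every vertex of this polytope is a monomial of $g$. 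Under the grant, $l_i e_i$ is supported on $I_0$, so $\Phi$ attains its minimum value $0$ and $\{\Phi = 0\}$ cuts out a nonempty face $F$, the convex hull of the $I_0$-supported exponent vectors. Writing $\alpha = m w$, the vertices of $F$ are exactly the pure powers $\tfrac{m}{c_k} e_k$ with $c_k \mid m$ and matching torsion; in particular $l_i e_i$ is a vertex of $F$. A vertex of a face is a vertex of the whole polytope, so spreadness forces $T_i^{l_i}$ to occur in $g$.

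The heart of the matter, and the step I expect to be the main obstacle, is to show $l_i w_i = \mu$ in $K$, that is, that the free part $\alpha$ is an integer multiple of $u_i$ and that the torsion parts match, $l_i \zeta_i = \theta$. I would split according to the position of $\varrho$ relative to $\lambda$. If $w_i \in \lambda^\circ$, then choosing generators $w_p \in \lambda^-$ and $w_q \in \lambda^+$, which exist because $\Mov(R)$ is of full dimension, places us in the situation of \cref{lem:threegenerateCY}(iii), whose \emph{moreover} clause yields a monomial $T_i^{l_i}$ in $g$ with $l_i$ a multiple of $\det(u_p, u_q)$. If instead $\varrho$ is a boundary ray of $\lambda$ or lies in $\lambda^- \cup \lambda^+$, I would apply \cref{lem:twofacesCY} to a pair $w_i, w_j$ with $\lambda \subseteq \cone(w_i, w_j)$: in the case that $g$ carries a unique monomial $T_i^{l_i} T_j^{l_j}$, evaluating $\psi$ forces $l_j = 0$ since $\psi(u_j) \ne 0$, so the monomial is the desired pure power; in the complementary case $w_i, w_j$ generate $K$, whence $u_i, u_j$ form a basis of $\ZZ^2$, so $c_i = 1$ and $u_i \mid \alpha$ is automatic.

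The delicate point throughout is the torsion compatibility $l_i \zeta_i = \theta$, and the non-primitive case $c_i \ge 2$, where local factoriality must be invoked to exclude that $\alpha$ fails to be reachable on $\varrho$. Here I expect \cref{lem:2on1ray} together with local factoriality to combine so as to pin down the primitivity of $u_i$ and the finite-part matching; controlling this interplay of the free divisibility and the torsion group $\Gamma$ is where the argument requires the most care.
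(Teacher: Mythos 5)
The paper itself gives no proof of this lemma: it is one of the tools recalled verbatim from \cite{HLM}*{Sec.~5} resp.\ \cite{Ma}, so your proposal has to stand on its own, and structurally it does. The case division ($w_i \in \lambda^\circ$ handled by the moreover clause of \cref{lem:threegenerateCY}(iii) with $w_1 \in \lambda^-$, $w_r \in \lambda^+$; otherwise \cref{lem:twofacesCY} applied to $w_i$ and some $w_j$ on the far side of $\lambda$, with the determinant evaluation killing $l_j$ in the unique-monomial branch) is exhaustive and each branch closes. The one genuine deficiency is the final paragraph: you declare the torsion compatibility $l_i\zeta_i = \theta$ and the non-primitive case to be the main obstacle and merely ``expect'' \cref{lem:2on1ray} plus local factoriality to settle it. In fact this loose end dissolves inside your own framework in one line. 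In the branch where $w_i, w_j$ generate $K$, write $\mu = s w_i + t w_j$ with $s,t \in \ZZ$; comparing free parts in the basis $u_i, u_j$ of $\ZZ^2$ and using $\alpha \in \QQ_{\geq 0}\, u_i$ forces $t = 0$, whence $\mu = s w_i$ holds in $K$ \emph{including} the torsion coordinates — no further input needed. (Alternatively: a group of rank two with nontrivial torsion is never generated by two elements, as one sees by passing to $K \otimes \ZZ/p\ZZ$ for a prime $p$ dividing $t$, so $\Gamma$ is trivial in this branch anyway.) In the branch $w_i \in \lambda^\circ$, the cited moreover clause already hands you the pure power with torsion accounted for. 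So the step you flag as requiring the most care requires none; leaving it at ``I expect'' is the only real hole in the write-up, and it is instantly fillable.

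Separately, one intermediate assertion is false as stated: the vertices of your face $F$ are \emph{not} exactly the pure powers $\tfrac{m}{c_k} e_k$ with matching torsion. With torsion constraints, mixed-support vertices occur: take $I_0 = \{1,2\}$, $c_1 = c_2 = 1$, $m = 3$, $\Gamma = \ZZ/3\ZZ$, $\zeta_1 = \bar{0}$, $\zeta_2 = \bar{1}$, $\theta = \bar{1}$; then $(2,1)$ is the only lattice point of $F$, and it is no pure power. You only use the harmless containment (that $l_i e_i$, once it lies in $F$, is a vertex), but even the $\Phi$/face detour is unnecessary: granted $l_i w_i = \mu$ in $K$, the point $l_i e_i$ is the \emph{unique} degree-$\mu$ exponent vector with $\sum_{k \neq i} a_k = 0$, hence the unique minimizer of a linear form that is nonnegative on the whole degree-$\mu$ polytope, hence a vertex of that polytope; spreadness then puts $T_i^{l_i}$ among the monomials of $g$ directly. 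Your argument for $l_i \geq 2$ via irredundancy (equivalently $\mu \neq w_i$, cf.\ \cref{rem:genCRirredundantforCY}) is fine. In sum: the approach is the natural one and correct, but as submitted the proof is incomplete at the self-acknowledged torsion step and carries one false side claim; both repairs are as above.
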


\begin{lemma} 
\label{lem:combminrayCY}
In \cref{setting:rho2forCY} assume that
$\Mov(R) = \Eff(R)$ and $\mu \in \Eff(R)^\circ$
hold.
Let $\Omega$ denote the set of two-dimensional cones
$\eta \in \Lambda(R)$ with
$\eta^\circ \subseteq \Mov(R)^\circ$.
\begin{enumerate}
\item 
If $X(\eta)$ is locally factorial 
for some $\eta \in \Omega$, 
then $\Eff(R)$ is a regular cone and 
every $u_i$ on the boundary of $\Eff(R)$ 
is primitive.

\item 
If $X(\eta)$ is locally factorial for 
all $\eta \in \Omega$, 
then, for any $w_i \in \Eff(R)^\circ$, 
we have $u_i = u_1 + u_r$ or $g$ has a 
monomial of the form $T_i^{l_i}$.
\end{enumerate}
\end{lemma}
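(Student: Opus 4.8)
The plan is to treat the two assertions separately, in both cases exploiting two consequences of the hypotheses. First, $\Mov(R) = \Eff(R)$ forces each of the extreme rays $\varrho_1 = \cone(w_1)$ and $\varrho_r = \cone(w_r)$ of $\Eff(R)$ to carry at least two generator degrees: if an extreme ray carried only one degree, deleting it would shrink $\cone(w_j;\,j\neq i)$, contradicting $\Mov(R)=\Eff(R)$. Second, $\mu \in \Eff(R)^\circ$ excludes any monomial $T_j^{l_j}$ whenever $w_j$ lies on the boundary of $\Eff(R)$, since such a monomial has degree $l_j w_j$ on an extreme ray rather than in the interior.

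For (i), I fix one $\eta \in \Omega$ with $X(\eta)$ locally factorial and write $u^-, u^+$ for the primitive lattice vectors spanning $\varrho_1, \varrho_r$. Let $w_i$ be any generator on $\varrho_1$. Choosing two generators $w_j, w_k$ on $\varrho_r$ (available by the remark above), I apply \cref{lem:threegenerateCY}~(ii): the memberships $w_i \in \eta^-$ and $w_j, w_k \in \eta^+$ hold by the counter-clockwise ordering, and $g$ has no monomial $T_i^{l_i}$, so $w_i, w_j, w_k$ generate $K$. Passing to free parts and feeding $u = u_i$ together with $w_j, w_k$ (both positive multiples of the primitive $u^+$) into \cref{lem:2on1ray} yields that $(u_i, u^+)$ is a lattice basis and that $u_i$ is primitive; hence $u_i = u^-$ and $\det(u^-, u^+) = \pm 1$. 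Running the symmetric argument on $\varrho_r$ via \cref{lem:threegenerateCY}~(i) shows every boundary generator is primitive, and the determinant identity shows $\Eff(R) = \cone(u^-, u^+)$ is regular.

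For (ii), I assume $g$ has no monomial $T_i^{l_i}$ and aim to prove $u_i = u_1 + u_r$; throughout I use (i), which gives that $u_1 = u^-$ and $u_r = u^+$ are primitive with $\det(u_1,u_r)=1$, so $u_i = \alpha u_1 + \beta u_r$ with $\alpha, \beta \in \ZZ_{\geq 1}$ and it suffices to show $\alpha = \det(u_i, u_r) = 1$ and $\beta = \det(u_1, u_i) = 1$. First I locate $\mu$ relative to $\varrho_i$. If $\mu \in \varrho_i$, then \cref{lem:raypowerCY} produces a monomial $T_i^{l_i}$, contradicting our assumption; so $\mu \notin \varrho_i$, and after possibly interchanging the roles of $\varrho_1$ and $\varrho_r$ I may assume $\mu \in \cone(w_i, w_r)^\circ$. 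A monomial purely in $T_1, T_i$ would then have degree in $\cone(w_1, w_i)$, which does not contain $\mu$; so the monomial alternative of \cref{lem:twofacesCY}, applied to the pair $w_1, w_i$ and a locally factorial cone $\eta \in \Omega$ inside $\cone(w_1,w_i)$, is impossible, forcing $w_1, w_i$ to generate $K$ and hence $\beta = 1$.

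It remains to show $\alpha = 1$, and this is the step I expect to be the main obstacle, since $\mu$ now lies on the same side of $\varrho_i$ as $\varrho_r$ and the monomial alternative of \cref{lem:twofacesCY} for the pair $w_i, w_r$ is no longer excluded. My plan is to produce a generator strictly between $\varrho_i$ and $\varrho_r$: then a cone $\eta \in \Omega$ with $w_i \in \eta^-$ and two $\varrho_r$-generators in $\eta^+$ exists, and \cref{lem:threegenerateCY}~(ii) with \cref{lem:2on1ray} give $\det(u_i, u_r) = 1$ exactly as in (i). To find this separating generator I would invoke \cref{prop:hypmovforCY}: since $\mu \in \cone(w_3, w_{r-2})$ while $\mu$ is counter-clockwise of $\varrho_i$, the degree $w_{r-2}$ cannot lie clockwise of or on $\varrho_i$, and when exactly two generators sit on $\varrho_r$ this already places $w_{r-2}$ strictly between $\varrho_i$ and $\varrho_r$. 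The genuinely delicate case is when three or more generators accumulate on $\varrho_r$, so that no generator separates $\varrho_i$ from $\varrho_r$; here \cref{lem:twofacesCY} applied to each pair $(w_i, w_t)$ with $w_t$ on $\varrho_r$ yields, in the case $\alpha \geq 2$, a family of monomials $T_i^{l}T_t^{k}$ with common exponents together with the torsion relations $k(\zeta_t - \zeta_{t'}) = 0$, and I would combine these with the irreducibility of $g$ (which forces a monomial free of $T_i$) and the almost-freeness of the grading to rule out $\alpha \geq 2$. Making this final bookkeeping airtight is the crux of the argument.
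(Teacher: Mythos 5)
Your part (i) is correct and is the standard argument: $\Mov(R)=\Eff(R)$ forces at least two generator degrees onto each boundary ray of $\Eff(R)$, the hypothesis $\mu\in\Eff(R)^\circ$ excludes pure monomials in boundary variables, and \cref{lem:threegenerateCY}~(i)/(ii) together with \cref{lem:2on1ray} (applied to free parts) yield primitivity and regularity. In part (ii) your derivation of $\det(u_1,u_i)=1$ is also essentially sound, except that you tacitly assume the existence of a chamber $\eta\in\Omega$ contained in $\cone(w_1,w_i)$; this is not automatic, since not every generator ray is a wall of $\Lambda(R)$, and if no wall of $\Lambda(R)$ lies in $\cone(w_1,w_i)\setminus\varrho_1$ there is no such $\eta$.

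The genuine gap is the step $\det(u_i,u_r)=1$, which you yourself leave unfinished in the case of three or more degrees on $\varrho_r$; the monomial-plus-torsion bookkeeping you sketch there is not carried out and is not needed. The missing idea is twofold. First, in \cref{lem:threegenerateCY}~(ii) the set $\lambda^+$ contains the \emph{upper boundary ray} of the chamber, so no generator separating $\varrho_i$ from $\varrho_r$ is required --- your appeal to \cref{prop:hypmovforCY} to manufacture one is superfluous. Second, you never use the ``moreover'' clause of \cref{lem:threegenerateCY}~(iii), which settles the dichotomy: since $w_i\in\Mov(R)^\circ$ and the chambers tile $\Eff(R)$, either $w_i$ lies in the relative interior of some $\eta\in\Omega$, in which case (iii) applied to $(w_1,w_i,w_r)$ produces a monomial $T_i^{l_i}$ and the second alternative of the lemma holds outright; or $\varrho_i$ is a wall of $\Lambda(R)$, and then the chamber $\eta^+$ directly above $\varrho_i$ satisfies $w_i\in(\eta^+)^-$ while any two degrees on $\varrho_r$ lie in $(\eta^+)^+$, so \cref{lem:threegenerateCY}~(ii) and \cref{lem:2on1ray} give $\det(u_i,u_r)=1$, and symmetrically the chamber directly below $\varrho_i$ with \cref{lem:threegenerateCY}~(i) gives $\det(u_1,u_i)=1$; combined with $\det(u_1,u_r)=1$ from part (i) this is exactly $u_i=u_1+u_r$. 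Note that this dichotomy simultaneously repairs the chamber-existence issue in your $\beta$-step, makes the location of $\mu$ irrelevant --- in particular removing your use of \cref{lem:raypowerCY}, whose spreadness hypothesis is not among the assumptions of the present lemma --- and eliminates all torsion considerations, since once a triple generates $K$ only the free parts enter via \cref{lem:2on1ray}.
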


\begin{lemma} \label{lem:w4onrho2CY}
Situation as in \cref{setting:rho2forCY}.
If we have $w_2 = w_3$ and
$\mu \in \varrho_2$, then $w_4 \in \varrho_2$ holds.
\end{lemma}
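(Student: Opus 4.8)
```latex
The plan is to exploit the counter-clockwise ordering from \cref{setting:rho2forCY} together with the location of the relation degree provided by \cref{prop:hypmovforCY}. First I would observe that the hypotheses $w_2 = w_3$ and $\mu \in \varrho_2$ pin down the position of $\mu$ relative to the first few generator degrees. Since $\varrho_2 = \cone(w_2) = \cone(w_3)$, the relation degree lies on the common ray through $w_2$ and $w_3$. By \cref{prop:hypmovforCY} we have $\mu \in \cone(w_3, w_{r-2})$, and the counter-clockwise ordering forces $\mu$ to sit on the boundary ray $\varrho_2$ of this cone, so all the ``action'' is concentrated near the initial generators.

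The key tool will be \cref{lem:raypowerCY}: since $w_2$ (equivalently $w_3$) lies on the ray through $\mu$, the polynomial $g$ must contain a pure-power monomial $T_2^{l_2}$ with $l_2 \ge 2$, and likewise $T_3^{l_3}$ with $l_3 \ge 2$. The decisive step is then a weighted-degree comparison. Writing $w_2 = w_3 = (u_2, \zeta_2)$ with $u_2 \in \ZZ^2$ on the ray $\varrho_2$, the monomials $T_2^{l_2}$ and $T_3^{l_3}$ both have free degree a positive multiple of $u_2$, and since they equal $\mu = (\alpha, \theta)$ we get $\alpha = l_2 u_2 = l_3 u_2$. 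Now I would argue that $w_4$ must also lie on $\varrho_2$: if $w_4$ were strictly counter-clockwise past $\varrho_2$, then by the ordering every $w_i$ with $i \ge 4$ would lie strictly on one side, and one checks that no monomial of $g$ other than products involving $T_2, T_3$ could reach the degree $\mu$ while respecting spreadness or the convex-combination structure forcing $\mu$ onto $\varrho_2$.

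The heart of the argument is showing that the presence of two distinct pure powers $T_2^{l_2}, T_3^{l_3}$ of degree $\mu$ on the extremal ray $\varrho_2$ obstructs $w_4$ from leaving that ray. Concretely, I expect to use that $\mu$ lies in the relative interior of $\Eff(R)$ (by \cref{rem:weaklyCY}, or by the location from \cref{prop:hypmovforCY}) together with the convexity packaged in spreadness: every degree-$\mu$ monomial is a convex combination of the monomials appearing in $g$. If $w_4$ left $\varrho_2$, the cone $\cone(w_1, w_4)$ would properly contain $\varrho_2$ on one side, and the existence of $T_2^{l_2}$ and $T_3^{l_3}$ as the only ``extremal'' degree-$\mu$ monomials along $\varrho_2$ would contradict either minimality of the generating system or the requirement that $\varrho_2$ genuinely be a ray of $\Lambda(R)$ carrying at least two generators.

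The main obstacle I anticipate is ruling out the borderline configuration cleanly, namely excluding the possibility that $w_4$ sits just off $\varrho_2$ while $g$ still admits a degree-$\mu$ monomial supported away from $T_2, T_3$. Handling this will likely require a careful case distinction on whether $u_4$ is a positive rational multiple of $u_2$ and invoking \cref{lem:twofacesCY} or \cref{lem:combminrayCY} to control monomials along the relevant two-face; the torsion coordinates $\zeta_i$ may also need separate bookkeeping, since two generators sharing a free ray need not share the same torsion part.
```
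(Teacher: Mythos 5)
Your outline correctly isolates the easy geometric half of the argument: with $\mu \in \varrho_2$ and $w_4 \notin \varrho_2$, the counter-clockwise ordering gives $\det(u_2,u_i)>0$ for all $i \geq 4$, so every monomial of degree $\mu$ not divisible by $T_1$ is of the form $T_2^{l_2}T_3^{l_3}$. But from there the proposal has a genuine gap: it never produces an actual contradiction. The presence of pure powers $T_2^{l}$, $T_3^{l}$ in $g$ is not in itself an obstruction --- for instance $g = T_2^3+T_3^3+T_1h$ is compatible with everything you list (spreadness, minimality of the generating system, $\varrho_2$ being a GIT-ray carrying two generator degrees), so appealing to those conditions cannot close the argument, and you yourself flag the decisive step as an unresolved ``main obstacle''. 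The mechanism the paper uses, absent from your sketch, is the $K$-primeness of the variable $T_1$, which is a standing assumption of \cref{setting:rho2forCY}: pass to $g_1 := g(0,T_2,\dotsc,T_r)$, which is nonzero since the prime $g$ is not divisible by $T_1$, and which by the first step is a polynomial in $T_2,T_3$ alone. Because $w_2=w_3$, all monomials $T_2^{l_2}T_3^{l_3}$ of degree $\mu$ share the same total degree, so $g_1$ is a classical binary form; over the algebraically closed field $\KK$ it splits as $g_1=\ell_1\dotsm\ell_m$ into linear forms, each $K$-homogeneous precisely because $w_2=w_3$, and $m>1$ by irredundancy of the presentation. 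Hence $g_1$ is not $K$-prime, so $T_1$ is not $K$-prime in $R_g$ --- the contradiction. (Your borderline example is killed exactly this way: $T_2^3+T_3^3$ splits into three homogeneous linear factors.)

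Two further inaccuracies. Your invocation of \cref{lem:raypowerCY} is not available here: that lemma assumes $X(\lambda)$ locally factorial and $R_g$ spread, neither of which is among the hypotheses of \cref{lem:w4onrho2CY} (only \cref{setting:rho2forCY}, which assumes $\QQ$-factoriality); the paper's proof deliberately uses only primeness of $g$ and $K$-primeness of the $T_i$, so the lemma holds without spreadness or local factoriality. And the worry about ``separate bookkeeping'' of torsion parts is moot: the hypothesis $w_2=w_3$ is an equality in $K=\ZZ^2\times\Gamma$, so $\zeta_2=\zeta_3$, which is exactly what makes the linear factors $\ell_i$ above $K$-homogeneous.
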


\begin{proof}
Suppose $w_4 \notin \varrho_2$. Then
every monomial of $g$ not being
divisible by $T_1$ is of the form
$T_2^{l_2} T_3^{l_3}$ where $l_2 + l_3 > 0$.
Since $g$ is prime, thus not divisible by $T_1$,
at least one such monomial occurs with non-zero
coefficient in $g$.
From $w_2 = w_3$ we deduce that $g_1 \coloneqq g(0, T_2, \dotsc, T_r)$
is a classical homogeneous polynomial in $T_2$, $T_3$,
thus admits a presentation $g_1 = \ell_1 \dotsm \ell_m$
where $\ell_1, \dotsc, \ell_m$ are linear forms in~$T_2$ and~$T_3$.
Here $w_2 = w_3$ ensures that $\ell_1, \dotsc, \ell_m$ are homogeneous
w.r.t.\ the $K$-grading.
Observe $m > 1$
as the presentation of $R$ is irredundant.
We conclude that $g_1$ is not $K$-prime,
hence $T_1 \in R$ is not $K$-prime either.
A contradiction.
\end{proof}

We have to bear in mind that the divisor class group $K = \Cl(X)$
of a smooth Calabi-Yau threefold $X$ is not necessarily torsion-free.
The following lemmas show that in the case of a hypersurface Cox ring
the order of the torsion subgroup
is bounded in terms of monomials of the relation degree.
A first important constraint is that the torsion subgroup of $K$ is
cyclic.

\begin{lemma}
\label{lem:torsfree}
Situation as in \cref{setting:rho2forCY}.
If $X = X(\lambda)$ is locally factorial and $\mu \notin \lambda$,
then $K \cong \ZZ^2$ holds.
\end{lemma}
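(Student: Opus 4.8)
The plan is to exhibit two generator degrees whose rays bound the ample cone $\lambda$, to show that they must generate $K$, and then to observe that a two-element generating set forces the torsion part of $K$ to vanish. First I would record that, as part of \cref{setting:rho2forCY}, the cone $\lambda$ is full-dimensional (the $\QQ$-factoriality of $X = X(\lambda)$ gives $\dim(\lambda) = 2$), so its two boundary rays are rays of the GIT-fan $\Lambda(R)$. By the remark in \cref{setting:rho2forCY} every ray of $\Lambda(R)$ is of the form $\varrho_i = \cone(w_i)$, so there are indices $i, j$ with generator degrees $w_i, w_j$ lying on the two boundary rays and $\lambda = \cone(w_i, w_j)$. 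In particular $\lambda \subseteq \cone(w_i, w_j)$, which is exactly the hypothesis needed to apply \cref{lem:twofacesCY} to this pair.

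Next I would invoke the dichotomy of \cref{lem:twofacesCY}: either $w_i, w_j$ generate $K$ as a group, or $g$ has precisely one monomial $T_i^{l_i} T_j^{l_j}$ with $l_i + l_j > 0$. The decisive step, which I expect to carry the whole argument, is that the second alternative is incompatible with the hypothesis $\mu \notin \lambda$. Indeed, every monomial of $g$ has degree $\mu$, so such a monomial would give $\mu = l_i w_i + l_j w_j$ with $l_i, l_j \ge 0$ and $l_i + l_j > 0$; this presents $\mu$ as a nonnegative combination of $w_i$ and $w_j$, whence $\mu \in \cone(w_i, w_j) = \lambda$, contradicting $\mu \notin \lambda$. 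Thus the monomial case is excluded and $w_i, w_j$ must generate $K$.

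It then remains to convert ``$K$ is generated by two elements'' into ``the torsion vanishes''. Writing $K = \ZZ^2 \times \Gamma$, suppose $\Gamma \neq 0$ and choose a prime $p$ dividing the order of $\Gamma$. Then $\Gamma / p\Gamma \neq 0$, so $K / pK \cong (\ZZ/p)^2 \times \Gamma/p\Gamma$ has $\FF_p$-dimension at least three. Any generating set of $K$ surjects onto $K/pK$ and hence has at least three elements, contradicting that $w_i, w_j$ generate $K$. Therefore $\Gamma = 0$ and $K \cong \ZZ^2$. The only genuinely subtle point is the identification of $\lambda$ with $\cone(w_i, w_j)$ for honest generator degrees in the first paragraph; once that and the reduction to \cref{lem:twofacesCY} are in place, the remaining steps are immediate.
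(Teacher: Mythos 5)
Your proposal is correct and follows essentially the same route as the paper: identify $\lambda = \cone(w_i, w_j)$ with $w_i, w_j$ generator degrees on its boundary rays, use $\mu \notin \lambda$ to rule out the monomial alternative in \cref{lem:twofacesCY} so that $w_i, w_j$ generate $K$, and conclude $K \cong \ZZ^2$ from $\rank(K) = 2$. Your extra details — justifying via the GIT-fan that the boundary rays carry generator degrees, and the $K/pK$ dimension count replacing the paper's terse ``since $\rank(K)=2$'' — are just elaborations of steps the paper leaves implicit.
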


\begin{proof}
We have $\lambda = \cone(w_i, w_j)$ for some generator degrees $w_i, w_j$
lying on the boundary of $\lambda$.
Due to $\mu \notin \lambda$, there is no monomial $T_i^{l_i} T_j^{l_j}$
of degree $\mu$.
\Cref{lem:twofacesCY} yields that $K$ is generated by $w_i, w_j$.
Since $\rk(K) = 2$, this implies $K \cong \ZZ^2$.
\end{proof}

\begin{lemma}
\label{lem:rho2tors}
Situation as in \cref{setting:rho2forCY}.
If $X$ is locally factorial, then $K \cong \ZZ^2 \times \ZZ / t \ZZ$
holds.
\end{lemma}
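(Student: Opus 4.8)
The plan is to produce three of the generator degrees that generate $K$ as a group, and then to force the torsion to be cyclic by a short counting argument. For the counting step, recall that for any prime $p$ one has $K \otimes_\ZZ \FF_p = \FF_p^2 \oplus (\Gamma/p\Gamma)$, so if $K$ is generated by three elements then $\dim_{\FF_p}(K/pK)\le 3$, whence $\dim_{\FF_p}(\Gamma/p\Gamma)\le 1$ for every $p$. A finite abelian group with this property has cyclic $p$-primary parts for all $p$, hence is cyclic; thus $\Gamma\cong\ZZ/t\ZZ$ and $K\cong\ZZ^2\times\ZZ/t\ZZ$. So everything reduces to exhibiting three generating degrees, for which I would feed suitable triples into \cref{lem:threegenerateCY}.

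First I would dispose of the case $\mu\notin\lambda$: here \cref{lem:torsfree} already yields $K\cong\ZZ^2$, so $\Gamma$ is trivial and there is nothing to prove. Assume therefore $\mu\in\lambda$. If some generator degree $w_j$ lies in the interior $\lambda^\circ$, I would apply condition~(iii) of \cref{lem:threegenerateCY} to the triple $w_1,w_j,w_r$. Indeed $w_1$ and $w_r$ span the extreme rays of $\Eff(R)$, and since $\lambda^\circ\subseteq\Mov(R)^\circ$ avoids $\partial\,\Eff(R)$, we get $w_1\in\lambda^-$ and $w_r\in\lambda^+$; together with $w_j\in\lambda^\circ$ this puts us exactly in case~(iii), and $w_1,w_j,w_r$ generate $K$.

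It remains to treat the case where no generator degree lies in $\lambda^\circ$. Here the condition $\lambda^\circ\subseteq\Mov(R)^\circ=\bigcap_i\cone(w_j;\,j\neq i)$ forces at least two of the $w_i$ into each of $\lambda^-$ and $\lambda^+$: if only one generator degree lay in $\lambda^-$, deleting it would raise the lower ray of $\Mov(R)$ above $\lambda^\circ$, a contradiction, and symmetrically for $\lambda^+$. After possibly reversing orientation (an admissible coordinate change, which swaps $\lambda^-$ and $\lambda^+$) I may assume $\mu$ does not lie on the upper boundary ray of $\lambda$. Then every $w_k\in\lambda^+$ lies strictly above the ray through $\mu$, so $\mu\notin\varrho_k$ and $g$ has no monomial $T_k^{l_k}$; applying condition~(i) of \cref{lem:threegenerateCY} to two degrees in $\lambda^-$ together with such a $w_k$ produces a generating triple. (Keeping the orientation, the excluded position $\mu\in$ upper ray would instead be handled by condition~(ii), using one degree in $\lambda^-$ and two in $\lambda^+$.)

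The main obstacle is the bookkeeping in this last case: one must be certain that the position of $\mu$ relative to the two boundary rays of $\lambda$ never simultaneously blocks the pure-power hypotheses of both~(i) and~(ii), and that there really are two generator degrees available on the side one wants to use. Both points come down to translating $\lambda^\circ\subseteq\Mov(R)^\circ$ into the statement that each of $\lambda^-,\lambda^+$ carries at least two of the $w_i$, together with the observation that the ray through $\mu$ can contain generator degrees from at most one of $\lambda^-,\lambda^+$; once these are pinned down, the remaining argument is a routine case distinction feeding into \cref{lem:threegenerateCY}.
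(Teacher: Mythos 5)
Your proposal is correct and takes essentially the same route as the paper: its proof likewise notes that each of $\lambda^-$ and $\lambda^+$ contains at least two generator degrees, feeds a suitable triple into \cref{lem:threegenerateCY} to conclude that $K$ is generated by three elements, and deduces cyclicity of the torsion part from $\rank(K) = 2$. Your case distinction on the position of $\mu$ (including the shortcut via \cref{lem:torsfree} when $\mu \notin \lambda$) and the $\dim_{\FF_p}(K/pK)$ count simply make explicit the details the paper leaves implicit.
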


\begin{proof}
Both $\lambda^-$ and $\lambda^-$ contain at least 
two Cox ring generator degrees.
This allows us to choose $w_i, w_j, w_k$ such that \cref{lem:threegenerateCY}
applies.
This ensures that $K$ is generated by three elements.
By \cref{setting:rho2forCY} we have $\rank(K) = 2$,
thus $K$ is as claimed.
\end{proof}

\begin{lemma}
\label{lem:twovarmonomial}
Situation as in \cref{setting:rho2forCY}.
Let $1 \leq i, j \leq n$ with
$\cone(w_i, w_j) \cap~\lambda^\circ \neq~\emptyset$.
If $X = X(\lambda)$ is locally factorial and
$\mu \in \lambda$ holds, then there is a monomial
$T_i^{l_i} T_j^{l_j}$ of degree~$\mu$ where $l_i + l_j > 0$.
\end{lemma}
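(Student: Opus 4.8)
The plan is to read the assertion as a statement about the semigroup generated by $w_i$ and $w_j$: what must be produced are integers $l_i,l_j\ge 0$, not both zero, with $\mu=l_iw_i+l_jw_j$ in $K$. The entire argument rests on first upgrading the weak hypothesis $\cone(w_i,w_j)\cap\lambda^\circ\neq\emptyset$ to the much stronger inclusion $\lambda\subseteq\cone(w_i,w_j)$; once that is available, the toolbox already assembled finishes the job almost mechanically.

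The decisive point I would establish first is that no generator degree can lie in the chamber interior $\lambda^\circ$. Each $w_k$ spans the ray $\varrho_k=\cone(w_k)$, a one–dimensional orbit cone and hence a ray of the GIT-fan $\Lambda(R)$. Since $\lambda$ is full-dimensional by \cref{setting:rho2forCY}, its interior $\lambda^\circ$ is an open maximal cell of $\Lambda(R)$ and is therefore disjoint from the lower-dimensional skeleton, in particular from every $\varrho_k$; thus $w_i,w_j\notin\lambda^\circ$. Now choose $p\in\cone(w_i,w_j)\cap\lambda^\circ$: were $p$ on a bounding ray of $\cone(w_i,w_j)$, that ray and hence one of $w_i,w_j$ would meet $\lambda^\circ$, which is impossible, so $p\in\cone(w_i,w_j)^\circ\cap\lambda^\circ$ and in particular $\varrho_i\neq\varrho_j$, making $\cone(w_i,w_j)$ two-dimensional. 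Working inside the pointed cone $\Eff(R)$ and ordering rays by angle, the open sectors $\cone(w_i,w_j)^\circ$ and $\lambda^\circ$ overlap while $w_i,w_j$ avoid $\lambda^\circ$; an elementary interval argument then forces both bounding rays of $\lambda$ into the sector spanned by $w_i,w_j$, i.e.\ $\lambda\subseteq\cone(w_i,w_j)$.

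With this inclusion in hand I would invoke \cref{lem:twofacesCY}, whose hypothesis is now met, obtaining a dichotomy. If $g$ carries a monomial $T_i^{l_i}T_j^{l_j}$ with $l_i+l_j>0$, then—every monomial of $g$ having degree $\mu$—we read off $\mu=l_iw_i+l_jw_j$ with $l_i,l_j\ge 0$, as required. Otherwise $w_i,w_j$ generate $K$; since $K$ has rank two, a two-element generating set forces $K\cong\ZZ^2$ (a group $\ZZ^2\times\Gamma$ with $\Gamma\neq 0$ needs at least three generators), whence a surjection $\ZZ^2\to K$ is an isomorphism and $(w_i,w_j)$ is a $\ZZ$-basis. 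Then $\mu\in\lambda\subseteq\cone(w_i,w_j)$ yields $\mu\in\cone(w_i,w_j)$, so the unique integral expansion $\mu=l_iw_i+l_jw_j$ has $l_i,l_j\ge 0$; and $l_i+l_j>0$ because $\mu=\deg(g)\neq 0$, the relation $g$ being non-constant.

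The main obstacle is the first step: passing from mere overlap to $\lambda\subseteq\cone(w_i,w_j)$. Its heart is the observation that a generator degree can never sit in a chamber interior, which is precisely what excludes the configurations where $\cone(w_i,w_j)$ would cover only part of $\lambda$. After that, the case distinction is routine; the only delicate bookkeeping concerns torsion in the generating case, and this collapses because a rank-two group generated by two elements is automatically free.
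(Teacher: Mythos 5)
Your overall route coincides with the paper's (reduce to $\lambda \subseteq \cone(w_i,w_j)$, invoke \cref{lem:twofacesCY}, and read off a nonnegative integral combination), but your self-declared ``decisive point'' --- that no generator degree can lie in $\lambda^\circ$ because each $\varrho_k = \cone(w_k)$ is an orbit cone and hence a ray of $\Lambda(R)$ --- is false, and \cref{setting:rho2forCY} warns against exactly this: each ray of $\Lambda(R)$ is of the form $\varrho_i$, \emph{but not vice versa}. The cone $\varrho_k$ is an orbit cone of $\bar X = V(g)$ only if $\bar X$ contains a point whose nonzero coordinates are confined to variables with degree on $\varrho_k$; when $g$ contains a pure power $T_k^{l_k}$ (which is forced, e.g., by \cref{lem:raypowerCY} whenever $w_k$ lies on the ray through $\mu$), this can fail, and then $w_k$ may sit in the interior of a full-dimensional chamber. \Cref{cor:gitconeCY}~(iii) explicitly allows this, and it actually occurs in the classification: in Number~19 one has $\mu = 3w_4$ with $w_4 = (2,1)$ lying in the interior of the chamber $\cone(w_1, w_5) = \cone((1,0),(1,1))$. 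So the step ``hence a ray of the GIT-fan'' is a genuine gap, not a routine observation, and everything you build on it (the interval argument giving $\lambda \subseteq \cone(w_i,w_j)$) is unsupported as written.

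The gap is local and repairable by running your case distinction \emph{first}, which is precisely the paper's order of argument: if $g$ has a monomial $T_i^{l_i}T_j^{l_j}$ with $l_i + l_j > 0$, you are done immediately, since every monomial of $g$ has degree $\mu$ --- and this case absorbs the troublesome configuration above, where a pure power of degree $\mu$ exists. If $g$ has no such monomial, then in particular $e_i, e_j \in \bar X$, so $\varrho_i$ and $\varrho_j$ \emph{are} orbit cones and hence GIT-rays, giving $w_i, w_j \notin \lambda^\circ$; from there your interval argument, the appeal to \cref{lem:twofacesCY} (whose second alternative is now excluded, so $w_i, w_j$ generate $K$), your correct remark that a rank-two group generated by two elements is free of rank two so that $(w_i,w_j)$ is a $\ZZ$-basis, and the nonnegative integral expansion of $0 \neq \mu \in \lambda \subseteq \cone(w_i,w_j)$ all go through verbatim. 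In short: correct skeleton, same endgame as the paper, but the load-bearing first claim needs the hypothesis ``no monomial $T_i^{l_i}T_j^{l_j}$'' that the paper installs before making it.
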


\begin{proof}
Since $g$ is $\mu$-homogeneous, we are done when
$g$ has a monomial of the form~$T_i^{l_i} T_j^{l_j}$
with $l_i + l_j > 0$.

We assume that $g$ has no monomial of the form $T_i^{l_i} T_j^{l_j}$.
Then $\varrho_i$ and $\varrho_j$ both are GIT-rays,
thus none of $w_i, w_j$ lies in $\lambda^\circ$.
This forces $\lambda \subseteq \cone(w_i, w_j)$.
Then \cref{lem:twofacesCY} tells us that $w_i, w_j$ generate $K$ as a group.
Using $\mu \in \lambda \subseteq \cone(w_i, w_j)$ we deduce
that $\mu$ is an positive integral combination over $w_i, w_j$,
i.e., there exists a monomial as desired.
\end{proof}

\begin{lemma}
\label{lem:rho2torsbound}
Situation as in \cref{setting:rho2forCY}.
Let $1 \leq i, j, k \leq r$ such that
$w_i, w_j, w_k$ generate $K$ as a group,
$\det(u_i, u_j) = 1$ and
$\cone(w_i, w_j) \cap \lambda^\circ \neq \emptyset$.
If $X$ is locally factorial,
then $t \mid l_k$ holds for any monomial $T_i^{l_i} T_k^{l_k}$ of degree $\mu$.
\end{lemma}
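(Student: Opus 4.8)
The plan is to push the statement into the cyclic quotient $K/\langle w_i, w_j\rangle$ and read off the divisibility there. If $t = 1$ there is nothing to prove, so I assume $t > 1$ and use \cref{lem:rho2tors} to write $\Gamma \cong \ZZ/t\ZZ$. Since $\det(u_i, u_j) = 1$, the vectors $u_i, u_j$ form a basis of $\ZZ^2$; performing the associated admissible coordinate change I normalise $u_i = (1,0)$, $u_j = (0,1)$ and set $u_k = (a,b)$. I then consider the homomorphism
\[
\psi\colon K = \ZZ^2 \times \Gamma \longrightarrow \Gamma,
\qquad
(x,y,\gamma) \longmapsto \gamma - x\zeta_i - y\zeta_j,
\]
which is surjective and has kernel exactly $\langle w_i, w_j\rangle$, so it induces the isomorphism $K/\langle w_i, w_j\rangle \cong \Gamma$. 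As $w_i, w_j, w_k$ generate $K$ while $\psi(w_i) = \psi(w_j) = 0$, the class $\delta \coloneqq \psi(w_k) = \zeta_k - a\zeta_i - b\zeta_j$ generates $\Gamma$ and hence has order $t$.

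Next I evaluate $\psi$ on the relation degree. The given monomial yields $\mu = l_i w_i + l_k w_k$ in $K$, so
\[
\psi(\mu) = l_i\psi(w_i) + l_k\psi(w_k) = l_k\delta.
\]
Because $\delta$ has order $t$, this delivers the chain of equivalences
\[
t \mid l_k
\;\Longleftrightarrow\;
l_k\delta = 0
\;\Longleftrightarrow\;
\psi(\mu) = 0
\;\Longleftrightarrow\;
\mu \in \langle w_i, w_j\rangle.
\]
Hence the lemma reduces to showing $\mu \in \langle w_i, w_j\rangle$. For this it is enough to exhibit a single monomial $T_i^{m_i}T_j^{m_j}$ occurring in $g$: being a monomial of the $\mu$-homogeneous $g$ it has degree $\mu$, giving $\mu = m_i w_i + m_j w_j \in \langle w_i, w_j\rangle$.

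Producing such a two-variable monomial is the heart of the matter. The hypotheses $\cone(w_i, w_j) \cap \lambda^\circ \neq \emptyset$ and local factoriality are precisely the standing assumptions of \cref{lem:twovarmonomial}; provided $\mu \in \lambda$, that result supplies a monomial $T_i^{m_i}T_j^{m_j}$ of degree $\mu$ and finishes the proof. The main obstacle is therefore to secure the location $\mu \in \lambda$, and I expect to handle it by comparing the two full-dimensional cones $\lambda$ and $\cone(w_i, w_j)$ in $K_\QQ = \QQ^2$. In the case $\lambda \subseteq \cone(w_i, w_j)$ I would bypass \cref{lem:twovarmonomial} entirely and invoke \cref{lem:twofacesCY}: since $\det(u_i, u_j) = 1$ forces $\langle w_i, w_j\rangle \cong \ZZ^2$ to meet the torsion trivially, this subgroup has index $t$ and so, for $t > 1$, the pair $w_i, w_j$ does \emph{not} generate $K$; \cref{lem:twofacesCY} then guarantees a monomial $T_i^{m_i}T_j^{m_j}$ in $g$ outright. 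Verifying that the remaining configurations of $\lambda$ and $\cone(w_i, w_j)$ indeed force $\mu \in \lambda$, so that one of these two routes always applies, is the step I anticipate will demand the most care.
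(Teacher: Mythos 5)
Your reduction is sound and, up to packaging, follows the paper's route: your homomorphism $\psi$ is a coordinate-free version of the paper's admissible coordinate change achieving $\zeta_i = \zeta_j = 0$ (legitimate because $\det(u_i,u_j)=1$ makes $u_i,u_j$ a basis), and both arguments then funnel through \cref{lem:twovarmonomial} to produce a monomial $T_i^{m_i}T_j^{m_j}$ of degree $\mu$, forcing $\mu \in \langle w_i, w_j\rangle$ and hence $t \mid l_k$. But as written your proof has a genuine gap at exactly the point you flag: you never establish $\mu \in \lambda$, which is a hypothesis of \cref{lem:twovarmonomial}, and the strategy you sketch for it --- a case analysis on the mutual position of $\lambda$ and $\cone(w_i,w_j)$ --- cannot deliver it. The location of $\mu$ relative to $\lambda$ is simply not determined by that cone configuration: in general there are several full-dimensional chambers $\eta$ with $\eta^\circ \subseteq \Mov(R)^\circ$, the hypotheses of the lemma can hold for a chamber $\lambda$ not containing $\mu$, and in that situation no two-variable monomial $T_i^{m_i}T_j^{m_j}$ of degree $\mu$ need exist at all. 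The lemma is nevertheless true in that case, but for a different reason.

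That reason is \cref{lem:torsfree}, which you have in hand but never invoke: if $\mu \notin \lambda$ and $X(\lambda)$ is locally factorial, then $K \cong \ZZ^2$, i.e.\ $t = 1$, contradicting your standing assumption $t > 1$ (equivalently, making the divisibility claim vacuous). This single sentence is how the paper secures $\mu \in \lambda$, and it renders both your anticipated configuration analysis and your fallback via \cref{lem:twofacesCY} unnecessary. The fallback is itself correct as far as it goes --- for $t > 1$ the subgroup $\langle w_i, w_j\rangle$ has index $t$ in $K$, so \cref{lem:twofacesCY} produces the two-variable monomial when $\lambda \subseteq \cone(w_i,w_j)$ --- but it only covers one special configuration. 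Insert the appeal to \cref{lem:torsfree} and your proof closes, coinciding in substance with the paper's.
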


\begin{proof}
Using $\det(u_i, u_j) = 1$ enables
us to apply a suitable admissible coordinate change
such that $\zeta_i = \zeta_j = 0$.
Moreover we may assume $\lambda \in \mu$; otherwise \cref{lem:torsfree}
yields $t = 1$ and there is nothing left to show.
This allows us to use \cref{lem:twovarmonomial}.
From this we infer that $\mu = (\alpha, \theta)$ is an integral positive
combination over $w_i$, $w_j$, thus $\theta = 0$.
Since $w_i, w_j, w_k$ generate $K$ as a group,
$\zeta_k$
is a generator for $\Gamma$.
Using $\zeta_i = 0$
we obtain $l_k \zeta_k = \theta = 0$
whenever $T_i^{l_i} T_k^{l_k}$ is of degree $\mu$.
This implies~$t \mid l_k$.
\end{proof}

\begin{lemma}
\label{lem:torsboundpower}
Situation as in \cref{setting:rho2forCY}.
Assume that $X = X(\lambda)$ is locally factorial.
If $\det(u_1, u_r) = 1$ and $\alpha = l_k u_k$ holds, then $t \mid l_k$.
\end{lemma}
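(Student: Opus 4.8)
The plan is to reduce everything to the generation and monomial lemmas already available, the key point being that the extreme pair $w_1,w_r$ realises the full torsion of $K$ as a quotient. First I would dispose of the trivial reductions. If $\mu\notin\lambda$, then \cref{lem:torsfree} gives $K\cong\ZZ^2$, hence $t=1$ and the claim is vacuous; and if $\alpha=0$, then $l_k=0$ (as $u_k\neq 0$) and $t\mid 0$ is automatic. So I may assume $\mu\in\lambda$, $l_k>0$ and $t>1$. The hypothesis $\alpha=l_k u_k$ says precisely that $\mu$ lies on the ray $\varrho_k$; since $\mu\in\lambda$ and $\lambda$ is a cone, the generator $w_k$ then lies in $\lambda$ as well, and it lies in $\lambda^\circ$ exactly when $\mu\in\lambda^\circ$.

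The arithmetic heart of the argument is the observation that $\det(u_1,u_r)=1$ forces $[K:\langle w_1,w_r\rangle]=t$. Indeed, $\det(u_1,u_r)=1$ makes $u_1,u_r$ a $\ZZ$-basis of $\ZZ^2$, so the projection $K=\ZZ^2\times\Gamma\to\ZZ^2$ restricts to an isomorphism $\langle w_1,w_r\rangle\xrightarrow{\ \sim\ }\ZZ^2$; consequently $\langle w_1,w_r\rangle$ is the graph of a homomorphism $\ZZ^2\to\Gamma$ and $K/\langle w_1,w_r\rangle\cong\Gamma$, a group of order $t$. This is the mechanism through which the determinant hypothesis is converted into a divisibility by $t$.

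Now I would split according to the position of $w_k$, recalling that the counter-clockwise ordering of \cref{setting:rho2forCY} puts $w_1\in\lambda^-$ and $w_r\in\lambda^+$ as the two extreme generators. If $\mu\in\lambda^\circ$, then $w_k\in\lambda^\circ$ with $1<k<r$, and \cref{lem:threegenerateCY}(iii) applies to the triple $(w_1,w_k,w_r)$: it produces a monomial $T_k^{s}$ of $g$ with $s$ divisible by the order of $K/\langle w_1,w_r\rangle$, which equals $t$ by the previous paragraph. Comparing free degrees in $\mu=s\,w_k$ with $\alpha=l_k u_k$ gives $s=l_k$, so $t\mid l_k$. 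If instead $\mu\in\partial\lambda$, then $\varrho_k$ is a boundary ray of $\lambda$ and $w_k$ lies in $\lambda^-$ or $\lambda^+$. Here I would produce the pure power separately: applying \cref{lem:twovarmonomial} to the two generators spanning the rays of $\lambda$ and comparing free degrees (the two ray directions being linearly independent) forces a monomial $T_k^{l_k}$ of $g$. For the generation statement I would invoke \cref{lem:threegenerateCY}(i) or~(ii) on $(w_1,w_k,w_r)$; the required absence of a pure power of $T_1$ or of $T_r$ holds because $\mu$ lies on $\varrho_k$ but not on the extreme rays $\varrho_1,\varrho_r$. With $w_1,w_r,w_k$ generating $K$, $\det(u_1,u_r)=1$, $\cone(w_1,w_r)=\Eff(R)\supseteq\lambda$, and the monomial $T_1^0 T_k^{l_k}$ in hand, \cref{lem:rho2torsbound} yields $t\mid l_k$.

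The step I expect to be the main obstacle is not any deep point but the bookkeeping of degenerate configurations: when $w_k$ shares its ray with $w_1$ or $w_r$, or when $\lambda^-$ (resp.\ $\lambda^+$) collapses to a single ray, the indices $1,r$ used above may have to be replaced by other boundary generators lying on the same rays, and one must verify that a pair with determinant $1$ meeting $\lambda^\circ$ is still available before \cref{lem:threegenerateCY} and \cref{lem:rho2torsbound} can be applied. These are routine admissible coordinate changes and relabelings, but they have to be carried out so that all hypotheses are genuinely met. The conceptual content—determinant $1$ on the extreme pair being equivalent to a torsion quotient of order $t$, combined with the forced pure power of $T_k$—is what actually drives the bound.
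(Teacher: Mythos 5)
Your proof is correct, and at its core it runs along the same track as the paper's, which disposes of the lemma in three lines: \cref{lem:threegenerateCY} gives that $w_1,w_k,w_r$ generate $K$, \cref{lem:raypowerCY} gives the pure power $T_k^{l_k}$ of degree $\mu$, and \cref{lem:rho2torsbound} applied to the pair $w_1,w_r$ then yields $t\mid l_k$. The differences are organizational but worth noting. First, you obtain the pure power from \cref{lem:twovarmonomial} (taking $w_k$ itself and a generator on the opposite boundary ray of $\lambda$, then comparing free parts) instead of \cref{lem:raypowerCY}; this is a small but genuine improvement, since \cref{lem:raypowerCY} carries a spreadness hypothesis on $R_g$ that the statement of the lemma does not impose, so your route proves the lemma exactly as stated, whereas the paper's proof implicitly uses spreadness (harmless in all its applications, where spreadness is in force). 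Second, in the interior case you bypass \cref{lem:rho2torsbound} entirely via the ``Moreover'' clause of \cref{lem:threegenerateCY}(iii) together with your computation that $\det(u_1,u_r)=1$ forces $K/\langle w_1,w_r\rangle\cong\Gamma$ of order $t$; this is precisely the mechanism hidden inside the proof of \cref{lem:rho2torsbound}, so you have effectively inlined it, while the paper invokes that lemma uniformly in both cases without splitting on the position of $\mu$. Your preliminary reductions ($\mu\notin\lambda$ via \cref{lem:torsfree}, $\alpha=0$) and your closing caveat about degenerate configurations (where $\varrho_k$ coincides with $\varrho_1$ or $\varrho_r$ and the extreme pair must be relabeled) are sound and more careful than the paper's terse proof, which silently assumes the generic configuration; in every application within the classification those degeneracies indeed do not occur, so nothing is lost on either side.
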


\begin{proof}
\Cref{lem:threegenerateCY} yields that $w_1, w_k, w_r$ generate $K$ as a group.
Besides~$T_k^{l_k}$ is of degree~$\mu$ by \cref{lem:raypowerCY}.
Now \cref{lem:rho2torsbound} tells us $t \mid l_k$.
\end{proof}

\begin{lemma}
\label{lem:rk2threegen}
Let $w_i = (u_i, \zeta_i) \in \ZZ^2 \times \ZZ / t \ZZ$
for $1 \leq i \leq 3$.
If $u_1 = u_2$ holds and $w_1, w_2, w_3$ span $\ZZ^2 \times \ZZ / t \ZZ$ as a group,
then $\zeta_1 - \zeta_2$ is a generator for $\ZZ / t \ZZ$.
\end{lemma}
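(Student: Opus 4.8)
The statement is a purely group-theoretic fact about $K = \ZZ^2 \times \ZZ/t\ZZ$, so I would argue directly with the projection $\pi \colon K \to \ZZ^2$ onto the free part, whose kernel is the torsion subgroup $\{0\} \times \ZZ/t\ZZ \cong \ZZ/t\ZZ$. First I would observe that, since $w_1, w_2, w_3$ generate $K$, their images $u_1, u_2, u_3$ generate $\ZZ^2$; as $u_1 = u_2$, already $u_1, u_3$ generate $\ZZ^2$ and hence form a $\ZZ$-basis.

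Next, set $H \coloneqq \langle w_1, w_3 \rangle \le K$. The restriction $\pi|_H$ is an isomorphism onto $\ZZ^2$: it is surjective because $u_1, u_3$ generate, and injective because $a w_1 + b w_3 \in \ker \pi$ forces $a u_1 + b u_3 = 0$, hence $a = b = 0$ as $u_1, u_3$ are a basis, whence the torsion part vanishes too. Consequently $H \cap \ker \pi = 0$, and, using surjectivity of $\pi|_H$, every $x \in K$ differs from some element of $H$ by an element of $\ker \pi$; this yields the internal direct sum $K = H \oplus \ker \pi$ and the identification $K / H \cong \ker \pi \cong \ZZ / t \ZZ$.

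Finally, since $w_1, w_3 \in H$ while $w_1, w_2, w_3$ generate $K$, the quotient $K / H$ is generated by the class of $w_2$. Because $w_1 = (u_1, \zeta_1) \in H$ and $u_2 = u_1$, we have $w_2 = w_1 + (0, \zeta_2 - \zeta_1)$ with $(0, \zeta_2 - \zeta_1) \in \ker \pi$; thus under $K / H \cong \ker \pi$ the class of $w_2$ corresponds to $\zeta_2 - \zeta_1$. Hence $\zeta_2 - \zeta_1$, equivalently $\zeta_1 - \zeta_2$, generates $\ZZ / t \ZZ$, as claimed.

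There is no serious obstacle here: the whole argument is elementary abelian-group theory, and the only point requiring a little care is the splitting $K = H \oplus \ker \pi$, which is precisely what reduces the generation hypothesis on the triple $w_1, w_2, w_3$ to a statement about the single torsion element $\zeta_1 - \zeta_2$.
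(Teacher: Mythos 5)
Your proof is correct and complete: two generators of $\ZZ^2$ necessarily form a basis, the splitting $K = \langle w_1, w_3 \rangle \oplus \ker\pi$ is properly justified via injectivity and surjectivity of $\pi$ restricted to $\langle w_1, w_3 \rangle$, and the identification of the class of $w_2$ in $K/\langle w_1, w_3\rangle$ with $\zeta_2 - \zeta_1$ follows since $w_2 - w_1 = (0, \zeta_2 - \zeta_1) \in \ker\pi$. The paper states \cref{lem:rk2threegen} without proof, treating it as an elementary group-theoretic fact, and your splitting argument is exactly the natural one underlying its use in the proof of \cref{lem:tors24}.
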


\begin{lemma}
\label{lem:tors24}
Situation as in \cref{setting:rho2forCY}.
If $X$ is locally factorial, $\det(u_1, u_r) = 1$, and 
$u_i = u_j$ holds for some $1 < i < j < r$,
then $\zeta_1 - \zeta_2$ is a generator for $\Gamma$.
In particular~$K$ is torsion-free or $t \neq 2, 4$ holds.
\end{lemma}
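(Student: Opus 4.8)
The plan is to read both assertions off suitable generating triples and feed them into \cref{lem:rk2threegen}. First I would locate the coincident ray $\varrho=\cone(u_i)=\cone(u_j)$. As $R_g$ is spread, \cref{cor:gitconeCY} shows that $\lambda^\circ$ contains at most one generator degree, lying on the ray through $\mu$; it cannot host the two distinct degrees $w_i,w_j$, so $\varrho$ is not interior to $\lambda$ and, the two sides being symmetric under the orientation-reversing admissible coordinate change, I may assume $w_i,w_j\in\lambda^-$. By \cref{prop:hypmovforCY} we have $\mu\in\cone(w_3,w_{r-2})$, so the ray through $\mu$ is neither $\varrho_1$ nor $\varrho_r$; since a pure power $T_r^{l_r}$ has degree $l_rw_r$ and would force $w_r$ onto that ray, $g$ has no monomial $T_r^{l_r}$. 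Hence \cref{lem:threegenerateCY}~(i) applies to $w_i,w_j\in\lambda^-$ and $w_r\in\lambda^+$, so $w_i,w_j,w_r$ generate $K$, and \cref{lem:rk2threegen} (with the coincident pair) yields that $\zeta_i-\zeta_j$ generates $\Gamma$.

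For $t\neq 2,4$ I would compare two spanning triples differing only by the swap $w_i\leftrightarrow w_j$. Both $\{w_1,w_i,w_r\}$ and $\{w_1,w_j,w_r\}$ satisfy the hypotheses of \cref{lem:threegenerateCY}~(i)---in each, two degrees lie in $\lambda^-$ and the anchor $w_r\in\lambda^+$ carries no pure power---so both generate $K$. For any triple $w_a,w_b,w_c$ generating $\ZZ^2\times\Gamma$, the torsion is generated by the Cramer combination $\det(u_b,u_c)\zeta_a+\det(u_c,u_a)\zeta_b+\det(u_a,u_b)\zeta_c$, which is then a generator of $\Gamma$. Using $u_i=u_j$, the two resulting generators $A_i,A_j\in\Gamma$ satisfy $A_i-A_j=\det(u_r,u_1)(\zeta_i-\zeta_j)=-(\zeta_i-\zeta_j)$, the last equality being exactly $\det(u_1,u_r)=1$; by the first part $A_i-A_j$ is again a generator.

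It then suffices to note that $\Gamma=\ZZ/t\ZZ$ admits two generators whose difference is again a generator only when $t\notin\{2,4\}$: for $t=2$ the lone generator $1$ gives $1-1=0$, and for $t=4$ the generators $1,3$ have differences $0,\pm2$, none a unit. Thus $t\notin\{2,4\}$. The step I expect to be the crux is guaranteeing that the coefficient of $\zeta_i-\zeta_j$ in $A_i-A_j$ is a unit rather than a zero divisor, since otherwise the comparison is vacuous; this is exactly what $\det(u_1,u_r)=1$ secures, and it is why the two anchors are taken to be the extreme degrees $w_1,w_r$. In the boundary situation where the ray through $\mu$ equals $\varrho_1$ or $\varrho_r$, one instead finds three collinear generator degrees on that ray whose pairwise torsion differences all generate $\Gamma$ by \cref{lem:rk2threegen}, and the same arithmetic rules out $t=2,4$.
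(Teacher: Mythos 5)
Your overall skeleton coincides with the paper's proof: you rule out $w_i, w_j \in \lambda^\circ$ via \cref{cor:gitconeCY}, reduce to $w_i, w_j \in \lambda^-$ by an orientation-reversing admissible coordinate change, produce the spanning triples $(w_i, w_j, w_r)$, $(w_1, w_i, w_r)$, $(w_1, w_j, w_r)$ with \cref{lem:threegenerateCY}~(i), and conclude with \cref{lem:rk2threegen} plus the arithmetic in $\ZZ/2\ZZ$ and $\ZZ/4\ZZ$. Your ``Cramer combination'' is a coordinate-free substitute for the paper's normalization $\zeta_1 = \zeta_r = \overline{0}$, which the paper obtains from $\det(u_1,u_r)=1$ by an admissible coordinate change; the identity you use is true (it follows from the gcd-of-minors criterion for when lifts of $w_a,w_b,w_c$ together with $t e_3$ generate $\ZZ^3$), but you assert it without proof and it is not among the paper's lemmas, whereas the normalized computation $A_i = -\zeta_i$ would need no new fact. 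You also nowhere justify that $\Gamma$ is cyclic (the paper cites \cref{lem:rho2tors}), which both \cref{lem:rk2threegen} and your closing arithmetic presuppose; this is minor, since cyclicity follows from any one of your spanning triples.

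The genuine gap is your justification that $g$ has no monomial $T_r^{l_r}$. From $\mu \in \cone(w_3, w_{r-2})$ you infer that the ray through $\mu$ is neither $\varrho_1$ nor $\varrho_r$; this inference is false in general, because $w_3$ may lie on $\varrho_1$ and $w_{r-2}$ on $\varrho_r$ --- exactly what happens when three generator degrees share a boundary ray, as in constellations I, III and V of the classification (for instance in Number~1 one has $u_4 = u_5 = u_6$, so $\cone(w_3, w_4) = \Eff(R)$ contains both boundary rays, and \cref{prop:hypmovforCY} alone does not keep $\mu$ off $\varrho_r$). The paper instead derives the no-pure-power statement from the standing weakly Calabi-Yau hypothesis: by \cref{rem:weaklyCY}, $\mu = w_1 + \dotsb + w_r$ lies in $\Eff(R)^\circ$, hence not on the boundary ray $\varrho_r$, so no monomial of degree $l_r w_r$ can have degree $\mu$. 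Your closing fallback for the ``boundary situation'' does not repair the defect: \cref{lem:rk2threegen} requires equal (not merely collinear) free parts and a spanning triple, and the spanning statements from \cref{lem:threegenerateCY} are precisely what is blocked when a pure power of $T_r$ may occur, so neither the existence of your three collinear degrees nor the generation of $\Gamma$ by their pairwise differences is established there. Replacing your appeal to \cref{prop:hypmovforCY} by the citation of \cref{rem:weaklyCY} closes the gap and makes the rest of your argument go through.
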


\begin{proof}
First note that $w_i, w_j$ share a common ray
in $K_\QQ$, thus do not lie
in the relative interior of the GIT-cone $\lambda$;
see \cref{cor:gitconeCY}. 
So we have $w_i, w_j \in \lambda^-$ or $w_i, w_j \in \lambda^+$. 
By applying an orientation reversing coordinate change
if necessary we achieve $w_i, w_j \in \lambda^-$.

We have $K = \ZZ^2 \times \ZZ / t \ZZ$; see \cref{lem:rho2tors}.
Using $\det(u_1, u_r) = 1$ enables
us to apply a suitable admissible coordinate change
such that $\zeta_1 = \zeta_r = \overline{0}$.
\Cref{rem:weaklyCY} ensures that $g$ has no monomial of the form $T_r^{l_r}$.
Hence \cref{lem:threegenerateCY} yields that both triples
$w_1, w_i, w_r$ and $w_1, w_j, w_r$ generate $K$ as a group.
In particular $\zeta_i$, $\zeta_j$ both are generators
for $\ZZ / t \ZZ$.
Moreover \cref{lem:threegenerateCY} tells us
that $w_i, w_j, w_r$ form a generating set for $K$.
\Cref{lem:rk2threegen} yields that $\zeta_i - \zeta_j$
is a generator for $\ZZ / t \ZZ$.
The proof is finished by the fact that the difference of
two generators for $\ZZ / 2 \ZZ$ resp.\ $\ZZ / 4 \ZZ$ is never
a generator for the respective group.
\end{proof}

\section{Proof of Theorem~\ref{thm:CY3folds}: Collecting Candidates}
The first and major task in the proof of \cref{thm:CY3folds} is to
show that we find specifying data 
for any given smooth Calabi-Yau threefold~$X$
with spread hypersurface Cox ring 
among the items displayed in \cref{thm:CY3folds}.
This is done by a case-by-case analysis of the geometric
constellation of the Cox ring generator degrees.

Now the ground field is $\KK = \CC$.
The sole reason for this is the reference 
involved in the proof of the following proposition.

\begin{proposition}
\label{CYSQMsmooth}
Consider the situation of \cref{setting:rho2forCY}.
If $X(\lambda)$ is a smooth weakly Calabi-Yau threefold,
then any variety $X(\eta)$ arising from 
a full-dimensional GIT-cone $\eta$
satisfying $\eta^\circ \subseteq \Mov(R)^\circ$
is smooth.
\end{proposition}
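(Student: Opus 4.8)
The plan is to join $X(\lambda)$ to $X(\eta)$ by a chain of flops and then to carry smoothness along this chain, the crucial input being that a flop between threefolds cannot create singularities.

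First I would apply \cref{floplemma}. Since $X(\lambda)$ is weakly Calabi-Yau its canonical class is trivial, so \cref{floplemma} yields a sequence of flops
\[
    X(\lambda) \dashrightarrow X_1 \dashrightarrow \dotsb \dashrightarrow X_k \dashrightarrow X(\eta).
\]
Inspecting its proof, every intermediate variety is $X_i = X(\eta_i)$ for a full-dimensional GIT-cone $\eta_i$ with $\eta_i^\circ \subseteq \Mov(R)^\circ$, so each $X_i$ is $\QQ$-factorial by \cref{locprops}~(i). Moreover, being weakly Calabi-Yau depends only on the Cox ring through the relation $\mu = w_1 + \dotsb + w_r$ (see \cref{rem:weaklyCY}~(i)); hence all the $X_i$ are $\QQ$-factorial weakly Calabi-Yau threefolds and each arrow is a genuine flop.

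Next I would isolate the key local assertion: if $\psi \colon X^- \dashrightarrow X^+$ is a flop of $\QQ$-factorial weakly Calabi-Yau threefolds and $X^-$ is smooth, then so is $X^+$. Granting this, smoothness propagates along the chain by induction starting from $X(\lambda)$, which gives smoothness of $X(\eta)$. For the local assertion I would use the defining diagram $X^- \xrightarrow{\varphi^-} Y \xleftarrow{\varphi^+} X^+$ with $\varphi^\pm$ small: then $\psi$ is an isomorphism in codimension one and the exceptional loci of $\varphi^\pm$ have codimension at least two, so any singularity of $X^+$ must lie on the flopped curves. Here the hypothesis $\KK = \CC$ enters, through the structure theory of three-dimensional flops \cite{Ko} (see also \cite{KoMo}): a flop preserves the type of the singularities of the total space, and in particular the flop of a smooth threefold is again smooth. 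This settles the assertion, and with it the proposition.

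The whole difficulty is concentrated in this last step. That a three-dimensional flop introduces no singularities is a genuinely complex-analytic fact about flopping contractions, not a formal one, and it is the only ingredient forcing the ground field to be $\CC$. The construction of the flop chain, the $\QQ$-factoriality of the intermediate models and the weakly Calabi-Yau property are all combinatorial and hold over any algebraically closed field of characteristic zero.
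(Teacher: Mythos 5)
Your proposal is correct and follows essentially the same route as the paper: invoke \cref{floplemma} to connect $X(\lambda)$ and $X(\eta)$ by a chain of flops, then propagate smoothness along the chain using the fact that flops of threefolds preserve smoothness (the paper cites \cite{KoMo}*{Thm.\ 6.15}, see also \cite{Ko}, which is exactly the structure theory you appeal to). Your additional checks --- that the intermediate models are $\QQ$-factorial weakly Calabi-Yau so each arrow is a genuine flop, and that the ground field $\CC$ is needed only for this one input --- are sound elaborations of details the paper leaves implicit.
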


\begin{proof}
\Cref{floplemma} provides us with a sequence of flops
\[
    X(\lambda) = X_1
    \dashrightarrow \dotsb \dashrightarrow
    X_k = X(\eta).
\]
According to \cite{KoMo}*{Thm. 6.15}, see also~\cite{Ko},
flops of threefolds preserve smoothness.
So we successively obtain
smoothness for all varieties in the above sequence,
especially for~$X(\eta)$.
\end{proof}

\medskip
Given a positive integer $n$,
a sum of the form
$n_1 + \dotsb + n_k = n$
where $n_1, \dotsc, n_k \in~\ZZ_{\geq 1}$
is called an \emph{integer partition} of $n$.
If one wants to emphasize the order of the summands,
one calls such a sum an \emph{integer composition} of $n$.
For instance, $1 + 1 + 2 = 4$ and $1 + 2 + 1 = 4$
are two different integer compositions of $4$ but
they are equal as integer partitions.

\begin{remark}
\label{rem:intcompsparts}
In \cref{setting:rho2forCY} the geometric constellation of $w_1, \dotsc, w_r$
is described by an integer composition of $r$
in the following sense:
First, we take into account that some of the rays $\varrho_i = \cone(w_i)$
may coincide and label the actual rays properly.
Let $1 \leq j_1 < \dotsb < j_s \leq r$ such that
$\varrho_{j_k} \neq \varrho_{j_l}$ holds for $j_k \neq j_l$ 
and each $\varrho_i$ equals some $\varrho_{j_k}$.
Set $\sigma_k \coloneqq \varrho_{j_k}$.
We denote $N_k$ for the number of
Cox ring generator degrees $w_i$ lying on $\sigma_k$.
Then the distribution of the degrees $w_i$ on the rays $\sigma_k$
is encoded by the composition
\[
	N_1 + \dotsb + N_s = r.
\]
For example, when $r = 4$ holds, the integer compositions
$1 + 1 + 2 = 4$ and $1 + 2 + 1 = 4$
correspond to the constellations
of $w_1, \dotsc, w_4$ illustrated below.
\begin{figure}[H]
\subcaptionbox*{$1 + 1 + 2 = 4$}[0.45\linewidth]
{
\begin{tikzpicture}[scale=0.6]
\coordinate (u1) at (5,1);
\coordinate (u2) at (1,2);
\coordinate (u3) at (-2,4);

\foreach \i in {1,...,3}
{
\path let \p1=(u\i), \n1={veclen(\x1,\y1)} in
	coordinate (v\i) at (${1/\n1}*(u\i)$);
}

\foreach \v[remember=\v as \lastv (initially v1)] in {v1,v2,v3}
	\path[top color=gray!25!, bottom color=gray!60!] 
		(0,0)--($3cm*(\lastv)$)--($3cm*(\v)$)--cycle;

\foreach \v/\dmax in {v1/1,v2/1,v3/2}
{
	\draw[thick,color=black] (0,0) -- ($3cm*(\v)$);
	\foreach \d in {1,...,\dmax}
		\fill (${1cm*((1+(\d-1)*0.5)}*(\v)$) circle (0.5ex);
}

\node[right] at ($3cm*(v1)$) {$\sigma_1 = \varrho_1$};
\node[above right] at ($3cm*(v2)$) {$\sigma_2 = \varrho_2$};
\node[above] at ($3cm*(v3)$) {$\sigma_3 = \varrho_3 = \varrho_4$};

\end{tikzpicture}   
} %
\subcaptionbox*{$1 + 2 + 1 = 4$}[0.45\linewidth]
{
\begin{tikzpicture}[scale=0.6]
\coordinate (u1) at (5,1);
\coordinate (u2) at (1,2);
\coordinate (u3) at (-2,4);

\foreach \i in {1,...,3}
{
\path let \p1=(u\i), \n1={veclen(\x1,\y1)} in
	coordinate (v\i) at (${1/\n1}*(u\i)$);
}

\foreach \v[remember=\v as \lastv (initially v1)] in {v1,v2,v3}
	\path[top color=gray!25!, bottom color=gray!60!] 
		(0,0)--($3cm*(\lastv)$)--($3cm*(\v)$)--cycle;

\foreach \v/\dmax in {v1/1,v2/2,v3/1}
{
	\draw[thick,color=black] (0,0) -- ($3cm*(\v)$);
	\foreach \d in {1,...,\dmax}
		\fill (${1cm*((1+(\d-1)*0.5)}*(\v)$) circle (0.5ex);
}

\node[right] at ($3cm*(v1)$) {$\sigma_1 = \varrho_1$};
\node[above right] at ($3cm*(v2)$) {$\sigma_2 = \varrho_2 = \varrho_3$};
\node[above] at ($3cm*(v3)$) {$\sigma_3 = \varrho_4$};
\end{tikzpicture}   
} %
\end{figure}
\end{remark}

\begin{proposition}
\label{prop:constellations}
Situation as in \cref{setting:rho2forCY}.
If $X$ is a weakly Calabi-Yau threefold,
then $r = 6$ holds and the constellation of $w_1, \dotsc, w_6$
corresponds to one of the following integer partitions
$N_1 + \dotsc + N_s = 6$
in the sense of \cref{rem:intcompsparts}.
{ \small
\[ \begin{array}{cccccccc}
\toprule
& s & N_1 & N_2 & N_3 & N_4 & N_5 & N_6 \\ \midrule
\text{I} & 2 & 3 & 3 & \text{---} & \text{---} & \text{---} & \text{---} \\
\text{II} & 3 & 2 & 2 & 2 & \text{---} & \text{---} & \text{---} \\
\text{III} & 3 & 1 & 2 & 3 & \text{---} & \text{---} & \text{---} \\
\text{IV} & 4 & 1 & 1 & 2 & 2 & \text{---} & \text{---} \\
\text{V} & 4 & 1 & 1 & 1 & 3 & \text{---} & \text{---} \\
\text{VI} & 5 & 1 & 1 & 1 & 1 & 2 & \text{---} \\
\text{VII} & 6 & 1 & 1 & 1 & 1 & 1 & 1 \\
\bottomrule
\end{array} \]
}
\end{proposition}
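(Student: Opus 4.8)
The plan is to fix $r$ by a dimension count, record the constellation as a partition of $6$, and then show that no ray can carry four or more generator degrees; the partitions of $6$ into at least two parts, each at most $3$, are exactly the seven items I--VII. To get $r = 6$, I would use that $R = R_g$ is a hypersurface, so $\dim \Spec R = r-1$, and that \cref{constr:mdsCY} gives $\dim X = \dim \Spec R - \rank(K) = r - 3$, because $K = \ZZ^2 \times \Gamma$ has rank two; then $\dim X = 3$ forces $r = 6$. Since the abstract Cox ring axioms make $\Mov(R)$ full-dimensional in $K_\QQ$ and $\Mov(R) \subseteq \Eff(R) = \cone(w_1,\dotsc,w_6)$, the effective cone is two-dimensional; in particular not all degrees lie on one ray, so $s \ge 2$. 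After these reductions the constellation is recorded by a partition $N_1 + \dotsb + N_s = 6$, and it remains to prove $N_k \le 3$ for all $k$.

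The crucial step is excluding four or more degrees on a single ray $\sigma$. By the counter-clockwise ordering of \cref{setting:rho2forCY} the degrees on $\sigma$ form a block of consecutive indices, and a block of length $\ge 4$ inside $\{1,\dotsc,6\}$ necessarily contains the indices $3$ and $4$. Hence $w_3, w_4 \in \sigma$ and $\cone(w_3,w_4) = \sigma$, so \cref{prop:hypmovforCY} places $\mu \in \cone(w_3, w_{r-2}) = \sigma$. As $X$ is weakly Calabi-Yau, \cref{rem:weaklyCY} gives $\mu \in \relint(\Eff(R))$; since a boundary ray of the two-dimensional cone $\Eff(R)$ meets its relative interior only in the origin, $\sigma$ must be an interior ray. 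Thus some degree lies strictly on each side of $\sigma$, and together with the four on $\sigma$ this exhausts all six degrees, forcing the configuration in which $w_1$ and $w_6$ sit alone on the two boundary rays while $w_2, w_3, w_4, w_5$ lie on $\sigma$.

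To close the argument I would compute the moving cone directly from $\Mov(R) = \bigcap_{i=1}^{6} \cone(w_j;\, j \ne i)$: the term with $w_1$ removed has its lower edge at the ray of $w_2$, the term with $w_6$ removed has its upper edge at the ray of $w_5$, and these are the binding constraints, so $\Mov(R) = \cone(w_2, w_5)$. But $w_2, w_5 \in \sigma$, whence $\Mov(R) = \sigma$ is one-dimensional, contradicting its full-dimensionality. Therefore every ray carries at most three degrees, and the constellation is a partition of $6$ with $s \ge 2$ parts each at most $3$, i.e.\ one of I--VII.

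The main obstacle is the bookkeeping that pins the constellation down to this single configuration: one has to use the counter-clockwise ordering to force $w_3, w_4 \in \sigma$, to invoke weak Calabi-Yau-ness solely through $\mu \in \relint(\Eff(R))$, and to observe that it is precisely the presence of a unique degree on each boundary ray that collapses $\Mov(R)$. It is worth stressing that this constellation constraint needs no smoothness hypothesis whatsoever; it rests only on locating $\mu$ via \cref{prop:hypmovforCY}, on weak Calabi-Yau-ness, and on the degeneration of the moving cone, rather than on any analysis of the hypersurface equation $g$ itself.
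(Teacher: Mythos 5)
Your proposal is correct and follows essentially the same route as the paper: $r = 6$ by the dimension count, and the bad partitions are excluded using exactly the paper's two mechanisms, namely one-dimensionality of $\Mov(R)$ and the conflict between $\mu \in \cone(w_3,w_4)$ from \cref{prop:hypmovforCY} and $\mu \in \Eff(R)^\circ$ from \cref{rem:weaklyCY}. Your only departure is organizational: the uniform lemma ``no ray carries four or more degrees,'' triggered by the observation that a consecutive block of length $\geq 4$ in $\{1,\dotsc,6\}$ must contain the indices $3$ and $4$, consolidates the paper's partition-by-partition exclusion of $1+5$, $2+4$, $1+4+1$ and $1+1+4$ into a single argument.
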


\begin{proof}
Observe $r = \dim(X) + \dim(K_\QQ) + 1 = 6$.
The subsequent table shows all integer partitions $N_1 + \dotsb + N_s = 6$.
{ \small
\[ \begin{array}{cccccccc}
\toprule
& s & N_1 & N_2 & N_3 & N_4 & N_5 & N_6 \\ \midrule
& 1 & 6 & \text{---} & \text{---} & \text{---} & \text{---} & \text{---} \\
& 2 & 1 & 5 & \text{---} & \text{---} & \text{---} & \text{---} \\
& 2 & 2 & 4 & \text{---} & \text{---} & \text{---} & \text{---} \\
\text{I} & 2 & 3 & 3 & \text{---} & \text{---} & \text{---} & \text{---} \\
\text{II} & 3 & 2 & 2 & 2 & \text{---} & \text{---} & \text{---} \\
\text{III} & 3 & 1 & 2 & 3 & \text{---} & \text{---} & \text{---} \\
& 3 & 1 & 1 & 4 & \text{---} & \text{---} & \text{---} \\
\text{IV} & 4 & 1 & 1 & 2 & 2 & \text{---} & \text{---} \\
\text{V} & 4 & 1 & 1 & 1 & 3 & \text{---} & \text{---} \\
\text{VI} & 5 & 1 & 1 & 1 & 1 & 2 & \text{---} \\
\text{VII} & 6 & 1 & 1 & 1 & 1 & 1 & 1 \\
\bottomrule
\end{array} \] }
Our task is to show that in the situation of
\cref{setting:rho2forCY} those
partitions without roman label do not admit a composition
corresponding to the constellation of $w_1, \dotsc, w_6$
in $K_\QQ$.

Observe that in the cases $s = 1$ and $s = 2$ where $N_1 = 1$, $N_2 = 5$
the moving cone $\Mov(R)$ of $R$ must be one-dimensional; a contradiction.
From \Cref{prop:hypmovforCY} we deduce that any constellation
given by $N_1 + N_2 = 2 + 4 = 6$
forces $\mu$ to live in the boundary of $\Eff(R)$.
This contradicts \cref{rem:weaklyCY}.
Furthermore, the partition $N_1 + N_2 + N_3 = 1 + 1 + 4$
comprises precisely two compositions,
that is to say
\[
	N_1 + N_2 + N_3 = 1 + 4 + 1
	\quad \text{and} \quad
	N_1 + N_2 + N_3 = 1 + 1 +4.
\]
The first of them implies that $\Mov(R)$ is one-dimensional; a contradiction.
Considering the latter, \cref{prop:hypmovforCY} shows that
$\mu$ lies on the boundary of $\Eff(R)$;
a contradiction to \cref{rem:weaklyCY}.
\end{proof}

Throughout the proof of \cref{thm:CY3folds}
we will often encounter inequations of the following type.

\begin{remark}
\label{rem:sumprodforCY}
The following table describes the solutions of the inequation
\[
	x_1 \dotsm x_n \leq x_1 + \dotsb + x_n, \quad
	x_1, \dotsc, x_n \in \ZZ_{\geq 1}
\]
for $n = 3,4,5$ where $x_1, \dotsc, x_n$ are in ascending order.
Here, $\ast$ stands for an arbitrary positive integer.

\centerline{
\small
\begin{minipage}[b]{0.33\textwidth}
\[
\begin{array}{c|ccccc}
\toprule 
n & x_1 & x_2 & x_3 & x_4 \\ \midrule
\multirow{3}{*}{3} &   1 &   1 &   \ast & \text{---} \\
  &   1 &   2 &   2 & \text{---}  \\
  &   1 &   2 &   3 & \text{---}  \\ \midrule
\multirow{3}{*}{4} & 1 & 1 & 1 & \ast \\
 & 1 & 1 & 2 & 3 \\
 & 1 & 1 & 2 & 4 \\
\bottomrule
\end{array}
\]
\end{minipage}
\begin{minipage}[b]{0.33\textwidth}
\[
\begin{array}{c|cccccc}
\toprule 
n & x_1 & x_2 & x_3 & x_4 & x_5 \\ \midrule
\multirow{6}{*}{5} & 1 & 1 & 1 & 1 & \ast \\
 & 1 & 1 & 1 & 2 & 2 \\
 & 1 & 1 & 1 & 2 & 3 \\
 & 1 & 1 & 1 & 2 & 4 \\
 & 1 & 1 & 1 & 2 & 5 \\
 & 1 & 1 & 1 & 3 & 3\\
\bottomrule
\end{array}
\]
\end{minipage}
}
\end{remark}

\medskip
\noindent
We work in \cref{setting:rho2forCY} for the proof of \cref{thm:CY3folds}.
According to \Cref{rem:weaklyCY}~(i) it suffices to 
determine the degree matrix $Q = [w_1, \dotsc, w_6]$
in order to figure out candidates
for specifying data of~$X$
since the relation degree $\mu$ is given by
\[
	\mu = w_1 + \dotsb + w_6.
\]
When $Q$ and $\mu$ are fixed,
we cover all possibilities (up to isomorphism)
by picking an interior point
$u$ of each full-dimensional GIT-chamber $\lambda$
with $\lambda^\circ \subseteq \Mov(R)^\circ$.

Our proof of \cref{thm:CY3folds} will be split into
Parts I, \dots, VII discussing the
constellations of $w_1, \dotsc, w_6$ in the
sense of \cref{rem:intcompsparts} given
by the accordingly labeled integer partition of six from 
\cref{prop:constellations}.
In the present article we elaborate Parts I--IV and~VII.
The remaining parts are treated with similar arguments
and can be found in \cite{Ma}.

\subsection*{Part I}
We consider $3 + 3 = 6$ i.e.\
the generator degrees $w_i$
are evenly distributed on two rays $\sigma_1$, $\sigma_2$.
So $w_1, \dotsc, w_6$ lie all in the boundary of $\Eff(R)$. 
\begin{center}
\begin{tikzpicture}[scale=0.6]
\coordinate (u1) at (5,1);
\coordinate (u2) at (-2,4);

\foreach \i in {1,...,2}
{
\path let \p1=(u\i), \n1={veclen(\x1,\y1)} in
	coordinate (v\i) at (${1/\n1}*(u\i)$);
}

\foreach \v[remember=\v as \lastv (initially v1)] in {v1,v2}
	\path[top color=gray!25!, bottom color=gray!60!] 
		(0,0)--($3cm*(\lastv)$)--($3cm*(\v)$)--cycle;

\foreach \v/\dmax in {v1/3,v2/3}
{
	\draw[thick,color=black] (0,0) -- ($3cm*(\v)$);
	\foreach \d in {1,...,\dmax}
		\fill (${1cm*((1+(\d-1)*0.5)}*(\v)$) circle (0.5ex);
}

\end{tikzpicture}   
\end{center}
\Cref{lem:combminrayCY}~(i) tells us that 
each $w_i$ is primitive and $\Eff(R)$ is regular.
In particular $u_1 = u_2 = u_3$ and $u_4 = u_5 = u_6$.
A suitable admissible coordinate change leads to
\[
	Q^0 = \begin{bmatrix}
	1 & 1 & 1 & 0 & 0 & 0 \\
	0 & 0 & 0 & 1 & 1 & 1
	\end{bmatrix}.
\]
If $K$ is torsion-free, this leads to specifying data
as in Number~\ref{CY3fold:I} from \cref{thm:CY3folds}.

We assume that $K$ admits torsion.
\Cref{rem:weaklyCY}~(i) implies $\alpha = u_1 + \dotsb + u_6 = (3,3)$.
\Cref{lem:twovarmonomial} guarantees that $T_2^3 T_4^3$
is of degree $\mu$. 
\Cref{lem:threegenerateCY} tells us that
$w_1, w_2, w_4$ generate $K$ as a group
and we have $\det(u_1, u_4) = 1$.
Thus we may apply \cref{lem:rho2torsbound}.
From this we infer $t \mid 3$, 
hence $t = 3$ i.e.\
$K = \ZZ^2 \times \ZZ / 3\ZZ$; see also \cref{lem:rho2tors}.
Furthermore \cref{lem:threegenerateCY} yields
that $K$ is generated by each of the triples
\[
	(w_1, w_2, w_4), \quad
	(w_1, w_3, w_4), \quad
	(w_2, w_3, w_4).
\]
Since $u_1 = u_2 = u_3$, we conclude that
$\eta_1, \eta_2, \eta_3$ are pairwise different.
Otherwise two of $w_1, w_2, w_3$ coincide,
hence $K$ is generated by two elements; a contradiction.
In the same manner we obtain that $\eta_4, \eta_5, \eta_6$
are pairwise different.
After suitably reordering $T_1, \dotsc, T_6$ we arrive
at speciyfing data as in Number~\ref{CY3fold:I-tors1}
from \cref{thm:CY3folds}.

\subsection*{Part II}
We discuss the degree constellation
determined by $2 + 2 + 2 = 6$.
Here the generator degrees $w_i$
are evenly distributed on three rays $\sigma_1, \sigma_2, \sigma_3$.
\begin{center}
\begin{tikzpicture}[scale=0.6]
\coordinate (u1) at (5,1);
\coordinate (u2) at (1,2);
\coordinate (u3) at (-2,4);

\foreach \i in {1,...,3}
{
\path let \p1=(u\i), \n1={veclen(\x1,\y1)} in
	coordinate (v\i) at (${1/\n1}*(u\i)$);
}

\foreach \v[remember=\v as \lastv (initially v1)] in {v1,v2,v3}
	\path[top color=gray!25!, bottom color=gray!60!] 
		(0,0)--($3cm*(\lastv)$)--($3cm*(\v)$)--cycle;

\foreach \v/\dmax in {v1/2,v2/2,v3/2}
{
	\draw[thick,color=black] (0,0) -- ($3cm*(\v)$);
	\foreach \d in {1,...,\dmax}
		\fill (${1cm*((1+(\d-1)*0.5)}*(\v)$) circle (0.5ex);
}

\end{tikzpicture}   
\end{center}
We have $\mu \in \sigma_2$ by \cref{prop:hypmovforCY}.
\Cref{cor:gitconeCY} provides us with
two GIT-cones
\[
	\eta_1 = \cone(w_1, w_3), \qquad
	\eta_2 = \cone(w_3, w_5).
\]
According to \cref{CYSQMsmooth} the
associated varieties $X(\eta_1)$, $X(\eta_2)$
both are smooth.
\Cref{lem:combminrayCY}~(i) yields $u_1 = u_2$, $u_5 = u_6$
and $\det(u_1, u_5) = 1$. 
After applying a suitable admissible coordinate change
the degree matrix is of the form
\[
	Q^0 = \begin{bmatrix}
	1 & 1 & a_3 & a_4 & 0 & 0 \\
	0 & 0 & b_3 & b_4 & 1 & 1
	\end{bmatrix}, \qquad
	a_3, a_4 \in \ZZ_{\geq 1}.
\]
We may assume $a_3 \leq a_4$.
Let $v = (v_1, v_2) \in \ZZ^2$ be the primitive vector
lying on~$\sigma_2$.
Applying \cref{lem:threegenerateCY} to $X(\eta_2)$
and the triple $w_3, w_4, w_5$ shows $\gcd(a_3, a_4) = 1$.
In addition, we obtain $v_1 = 1$ from \cref{lem:2on1ray}.
\Cref{lem:threegenerateCY} again, this time applied to
$X(\eta_1)$ and $w_1, w_2, w_3$, gives $v_2 = 1$.
From $v_1 = v_2$ we deduce $a_3 = b_3$ and $a_4 = b_4$.
\Cref{lem:raypowerCY} ensures that~$\mu_1$ is divisble
by both $a_3$ and $a_4$,
thus $a_3 a_4 \mid \mu_1$.
\Cref{rem:weaklyCY}~(i) says $\mu = w_1 + \dotsc + w_6$.
We conclude
\[
	a_3 a_4 \mid \mu_1 = a_3 + a_4 + 2.
\]
First we deduce $a_4 \mid a_3 + 2$.
Moreover we obtain $a_3 \leq 4$ due to $a_3 \leq a_4$.
Altogether the integers $a_3$, $a_4$ are bounded,
so we just have to examine
the possible configurations.

\begin{itemize}[itemsep=1ex]
\item $a_3 = 1$:
From $a_4 \mid a_3 + 2 = 3$ we infer $a_4 = 1, 3$.
Now we show that $K$ is torsion-free.
For $a_4 = 1$ we have
\[
	\free{Q} = \begin{bmatrix}
	1 & 1 & 1 & 1 & 0 & 0 \\
	0 & 0 & 1 & 1 & 1 & 1
	\end{bmatrix}, \qquad
	\alpha = (4, 4).
\]
Observe $\free{\mu} = 4 u_3$.
\Cref{lem:torsfree} tells us $t \mid 4$,
thus $K$ is torsion-free according to \cref{lem:tors24}.
Similarly, for $a_4 = 3$ we have
\[
	\free{Q} = \begin{bmatrix}
	1 & 1 & 1 & 3 & 0 & 0 \\
	0 & 0 & 1 & 3 & 1 & 1
	\end{bmatrix}, \qquad
	\alpha = (6, 6).
\]
Observe $\alpha = 2 u_4$.
\Cref{lem:torsfree} tells us $t \mid 2$,
thus $K$ is torsion-free according to \cref{lem:tors24}.
We arrive at specifying data as in Numbers~\ref{CY3fold:II-1} and~\ref{CY3fold:II-2} from
\cref{thm:CY3folds}.
Observe $X(\eta_1) \cong X(\eta_2)$
in both cases due to the
symmetry of the geometric constellation of
$w_1, \dotsc, w_6, \mu$.
Thus it suffices to list an ample class
for~$X(\eta_1)$ only.

\item $a_3 = 2$:
From $a_4 \mid a_3 + 2 = 4$ and $a_3 \leq a_4$
we infer $a_4 = 2,\,4$.
This contradicts $\gcd(a_3, a_4) = 1$.

\item $a_3 = 3$:
From $a_4 \mid a_3 + 2 = 5$ and $a_3 \leq a_4$
we infer $a_4 = 5$.
This leads to $\mu_1 = a_3 + a_4 + 2 = 10$.
A contradiction to $a_3 \mid \mu_1$.

\item $a_3 = 4$:
From $a_4 \mid a_3 + 2 = 6$ and $a_3 \leq a_4$
we infer $a_4 = 6$.
This contradicts $\gcd(a_3, a_4) = 1$.
\end{itemize}

\subsection*{Part III}
In this part we consider the arrangements
of $w_1, \dotsc, w_6$ associated
with the integer partition $1 + 2 + 3 = 6$.
Here we have precisely three rays $\sigma_1, \sigma_2, \sigma_3$
each of which contains a different number of Cox ring generator degrees.
A suitable admissible coordinate change 
turns the setting into one of the following:

\begin{figure}[h]
\centering
\renewcommand\thesubfigure{\roman{subfigure}}
\subcaptionbox*{III-i}[0.3\linewidth]
{
\begin{tikzpicture}[scale=0.6]
\coordinate (u1) at (5,1);
\coordinate (u2) at (1,2);
\coordinate (u3) at (-2,4);

\foreach \i in {1,...,3}
{
\path let \p1=(u\i), \n1={veclen(\x1,\y1)} in
	coordinate (v\i) at (${1/\n1}*(u\i)$);
}

\foreach \v[remember=\v as \lastv (initially v1)] in {v1,v2,v3}
	\path[top color=gray!25!, bottom color=gray!60!] 
		(0,0)--($3cm*(\lastv)$)--($3cm*(\v)$)--cycle;

\foreach \v/\dmax in {v1/3,v2/2,v3/1}
{
	\draw[thick,color=black] (0,0) -- ($3cm*(\v)$);
	\foreach \d in {1,...,\dmax}
		\fill (${1cm*((1+(\d-1)*0.5)}*(\v)$) circle (0.5ex);
}

\end{tikzpicture}   
} %
\subcaptionbox*{III-ii}[0.3\linewidth]
{
\begin{tikzpicture}[scale=0.6]
\coordinate (u1) at (5,1);
\coordinate (u2) at (1,2);
\coordinate (u3) at (-2,4);

\foreach \i in {1,...,3}
{
\path let \p1=(u\i), \n1={veclen(\x1,\y1)} in
	coordinate (v\i) at (${1/\n1}*(u\i)$);
}

\foreach \v[remember=\v as \lastv (initially v1)] in {v1,v2,v3}
	\path[top color=gray!25!, bottom color=gray!60!] 
		(0,0)--($3cm*(\lastv)$)--($3cm*(\v)$)--cycle;

\foreach \v/\dmax in {v1/1,v2/3,v3/2}
{
	\draw[thick,color=black] (0,0) -- ($3cm*(\v)$);
	\foreach \d in {1,...,\dmax}
		\fill (${1cm*((1+(\d-1)*0.5)}*(\v)$) circle (0.5ex);
}

\end{tikzpicture}   
} %
\subcaptionbox*{III-iii}[0.3\linewidth]
{
\begin{tikzpicture}[scale=0.6]
\coordinate (u1) at (5,1);
\coordinate (u2) at (1,2);
\coordinate (u3) at (-2,4);

\foreach \i in {1,...,3}
{
\path let \p1=(u\i), \n1={veclen(\x1,\y1)} in
	coordinate (v\i) at (${1/\n1}*(u\i)$);
}

\foreach \v[remember=\v as \lastv (initially v1)] in {v1,v2,v3}
	\path[top color=gray!25!, bottom color=gray!60!] 
		(0,0)--($3cm*(\lastv)$)--($3cm*(\v)$)--cycle;

\foreach \v/\dmax in {v1/3,v2/1,v3/2}
{
	\draw[thick,color=black] (0,0) -- ($3cm*(\v)$);
	\foreach \d in {1,...,\dmax}
		\fill (${1cm*((1+(\d-1)*0.5)}*(\v)$) circle (0.5ex);
}

\end{tikzpicture}   
}
\end{figure}

\noindent
\emph{Case III-i}.
Here we have $\lambda = \cone(w_1, w_4)$.
Let $v \in \ZZ^2$ be a primitive vector on~$\sigma_2$.
\Cref{prop:hypmovforCY} and \cref{rem:weaklyCY}~(i)
tell us $\mu \in \lambda^\circ \cup \sigma_2$.
This allows us to apply \cref{lem:threegenerateCY} to $w_i, w_4, w_5$
for $i = 1,2,3$.
From this we infer $\det(u_i, v) = 1$ for $i = 1,2,3$.
In particular $u_1, u_2, u_3$ are primitive,
thus $u_1 = u_2 = u_3$.
Applying \cref{lem:threegenerateCY} to the triple~$w_1, w_2, w_6$
shows $\det(u_1, u_6) = 1$.
A suitable admissible coordinate change amounts to
$v = (0,1)$ and
\[
	\free{Q} = \begin{bmatrix*}[r]
	1 & 1 & 1 &   0 &   0 & -a_6 \\
	0 & 0 & 0 & b_4 & b_5 &    1
	\end{bmatrix*}, \qquad
	a_6, b_4, b_5 \in \ZZ_{\geq 1}
\]
We may assume $b_4 \leq b_5$.
To proceed we have to take the position of $\mu$ into account.

\medskip
\noindent
Assume $\mu \in \lambda^\circ$.
Then we may apply \cref{lem:threegenerateCY,lem:2on1ray} to
the two triples~$w_1, w_2, w_4$ and~$w_1, w_2, w_5$.
We obtain that $u_4$ and $u_5$
both are primitive, hence 
\[
	u_4 = u_5 = v = (0, 1).
\]
From \cref{rem:weaklyCY}~(i) we infer
$\alpha = (3-a_6, 3)$.
Since $\mu$ lives in the relative interior
of~$\lambda$, which is the positive orthant,
we end up with $a_6 = 1, 2$.
We show that $K$ is torsion free in both cases.

\begin{itemize}
\item $a_6 = 1$.
The free parts of the specifying data are given as
\[
	\free{Q} = \begin{bmatrix*}[r]
	1 & 1 & 1 & 0 & 0 & -1 \\
	0 & 0 & 0 & 1 & 1 &  1
	\end{bmatrix*}, \qquad
	\alpha = (2, 3).
\]
\Cref{lem:twovarmonomial} ensures that $T_1^2 T_4^3$ is of degree $\mu$.
Moreover \cref{lem:threegenerateCY}
shows that both triples $w_1, w_4, w_5$ and $w_1, w_2, w_4$ generate $K$ as a group.
Applying \cref{lem:rho2torsbound}
to $w_1, w_4, w_5$ and $T_1^2 T_4^3$ yields
$t \mid 3$. Again \cref{lem:rho2torsbound},
this time applied to $w_1, w_2, w_4$ and $T_1^2 T_3^3$
shows $t \mid 2$. Altogether $t = 1$, thus $K$ is torsion-free.

\item $a_6 = 2$.
The free parts of the specifying data are given as
\[
	\free{Q} = \begin{bmatrix*}[r]
	1 & 1 & 1 & 0 & 0 & -2 \\
	0 & 0 & 0 & 1 & 1 &  1
	\end{bmatrix*}, \qquad
	\alpha = (1, 3).
\]
\Cref{lem:twovarmonomial} ensures that $T_1^1 T_4^3$ of degree $\mu$.
Moreover \cref{lem:threegenerateCY}
shows that $w_1, w_2, w_4$ generate $K$ as a group.
Applying \cref{lem:rho2torsbound}
to $w_1, w_2, w_4$ and $T_1 T_4^3$
shows $t = 1$ i.e.\ $K$ is torsion-free.
\end{itemize}
Eventually this leads to specyifing data as in 
Numbers~\ref{CY3fold:III-i-1} and~\ref{CY3fold:III-i-2}
from \cref{thm:CY3folds}.

\medskip
\noindent
Assume $\mu \in \sigma_2$.
Recall that $v = (0,1)$ spans the ray $\sigma_2$.
So here we have $\alpha_1 = 0$.
From \cref{rem:weaklyCY}~(i)
we obtain $a_6 = 3$ and $\alpha_2 = b_4 + b_5 + 1$.
\Cref{lem:raypowerCY} yields $b_4, b_5 \mid \alpha_2$.
Applying \cref{lem:threegenerateCY} to $w_1, w_4, w_5$ shows
$\gcd(b_4, b_5) = 1$.
We conclude
\[
	b_4 b_5 \mid \alpha_2 = b_4 + b_5 + 1.
\]
This implies $b_5 \mid b_4 + 1$. Moreover we 
deduce $b_4 \leq 3$.
We discuss the resulting cases:
\begin{itemize}[itemsep=1ex]
\item
$b_4 = 1$:
From $b_5 \mid b_4 + 1 = 2$ we deduce $b_5 = 1,2$.
For $b_5 = 1$ we have
\[
	Q^0 = \begin{bmatrix*}[r]
	1 & 1 & 1 & 0 & 0 & -3 \\
	0 & 0 & 0 & 1 & 1 &  1
	\end{bmatrix*}, \quad
	\alpha = (0, 3).
\]
Suppose that $K$ is torsion-free.
Then $w_4 = w_5$ holds.
Reversing the order of the variables by applying a suitable
admissible coordinate change enables us to use \cref{lem:w4onrho2CY}.
This forces two of the rays $\sigma_i$ to coincide; a contradiction.
So $K$ has torsion.
From $\alpha= 3 u_4$ and \cref{lem:torsboundpower} we obtain $t \mid 3$,
hence $t = 3$. So we have $K = \ZZ^2 \times \ZZ / 3 \ZZ$. 
Using $\det(u_1, u_6) = 1$ enables us to apply
a suitable admissible coordinate change
such that $\zeta_1 = \zeta_6 = 0$.
Now \cref{lem:threegenerateCY} shows that 
both triples $w_1, w_2, w_6$ and $w_1, w_3, w_6$
generate $K$ as a group.
From this we infer that $\zeta_2$ and $\zeta_3$
both are generators for $\ZZ \times \ZZ / 3 \ZZ$.
\Cref{lem:threegenerateCY} yields that $w_2, w_3, w_6$
form a generating set for $K$ as well.
This forces $\zeta_2 \neq \zeta_3$. 
Otherwise $w_2 = w_3$ holds, thus $K$ is spanned by two elements;
a contradiction.
Similarly, \cref{lem:threegenerateCY} applied to $w_1, w_4, w_6$
and $w_1, w_5, w_6$ yields that $\zeta_4$ and $\zeta_5$
both are generators for $\ZZ / 3 \ZZ$. 
Moreover applying \cref{lem:threegenerateCY} to $w_1, w_4, w_5$
ensures $\zeta_4 \neq \zeta_5$.
After suitably reordering $T_2$, $T_3$ and $T_4, T_5$
we arrive at Number~\ref{CY3fold:III-i-3-tors} from \cref{thm:CY3folds}.
	
We turn to $b_5 = 2$.
Here the free parts of degree matrix and relation degree
are given by
\[
	Q^0 = \begin{bmatrix*}[r]
	1 & 1 & 1 & 0 & 0 & -3 \\
	0 & 0 & 0 & 1 & 2 &  1
	\end{bmatrix*}, \quad
	\alpha = (0, 4).
\]
Note $\alpha = 2 u_5$. From \cref{lem:torsboundpower} we infer $t \mid 2$,
hence $K$ is torsion-free according to \cref{lem:tors24}.
Moreover every $\mu$-homogeneous polynomial not depending on $T_6$
is a linear combination over the monomials $T_4^4$, $T_4^2 T_5$, $T_5^2$,
thus reducible.
This implies that $T_6 \in R$ is not prime.
A contradiction.

\item
$b_4 = 2$:
From $b_5 \mid b_4 + 1 = 3$ and $b_4 \leq b_5$ follows $b_5 = 3$.
This leads to
\[
	Q^0 = \begin{bmatrix*}[r]
	1 & 1 & 1 & 0 & 0 & -3 \\
	0 & 0 & 0 & 2 & 3 &  1
	\end{bmatrix*}, \quad
	\alpha = (0, 6).
\]
Observe $\alpha = 3 u_2 = 2 u_3$.
\Cref{lem:torsboundpower} yields $t \mid 2$ and $t \mid 3$,
hence $t = 1$. So~$K$ is torsion-free.
We end up with Number~\ref{CY3fold:III-i-5} from \cref{thm:CY3folds}.

\item
$b_4 = 3$:
From $b_5 \mid b_4 + 1 = 4$ and $b_4 \leq b_5$ we infer $b_5 = 4$.
This implies $\alpha_2 = 8$; a contradiction to $b_4 \mid \alpha_2$.
\end{itemize}

\bigskip
\noindent
\emph{Case III-ii}. 
Here, we have $\lambda = \cone(w_2, w_5)$.
\Cref{prop:hypmovforCY} says $\mu \in \sigma_2$.
Let $v \in \ZZ^2$ be
a primitive vector on $\sigma_2$.
Applying \cref{lem:threegenerateCY} to $w_2, w_3, w_5$
as well as $w_2, w_3, w_6$ shows
$\det(v, u_5) = 1$ and $\det(v, u_6) = 1$.
In particular $u_5, u_6$ are primitive and lie on the same ray hence coincide.
Again by \cref{lem:threegenerateCY}, now applied 
to $w_1, w_5, w_6$, we obtain $\det(u_1, u_5) = 1$.
A suitable admissible coordinate change amounts to
$v = (1,0)$ and~$u_5 = (0,1)$.
As a result the free part $Q^0$ of the degree matrix $Q$ is of the form
\[
	Q^0 = \begin{bmatrix*}[r]
		 1 & a_2 & a_3 & a_4 & 0 & 0 \\
		-2 &   0 &   0 &   0 & 1 & 1
	\end{bmatrix*}, \quad
	a_2, a_3, a_4 \in \ZZ_{\geq 1}.
\]
Note that the second coordinate of $u_1$ is determined by
$\alpha_2 = 0$ and \cref{rem:weaklyCY}~(i).
Furthermore, we may assume $a_2 \leq a_3 \leq a_4$.
\Cref{lem:raypowerCY} shows that $\alpha_1$ is divisible by each
of $a_2, a_3, a_4$.
From applying \cref{lem:threegenerateCY} to all triples $w_i, w_j, w_6$
where $2 \leq i < j \leq 4$ we infer that $a_2, a_3, a_4$
are pairwise coprime.
This leads to
\[
	a_2 a_3 a_4 \mid a_2 + a_3 + a_4 + 1.
\]
According to \cref{rem:sumprodforCY} we have $a_2 = 1$ and
one of the following two configurations
\[
	a_3 = 1, \quad a_3 = 2 \text{ and } a_4 = 3.
\]
Note that $a_3 = 2$ and $a_4 = 3$
amounts to $\alpha_1 = 7$;
a contradiction to $a_3 \mid \alpha_1$.
So we have $a_3 = 1$.
Then $a_4 \mid \alpha_1 = 3 + a_4$ holds.
We conclude $a_4 \mid 3$ i.e.\ $a_4 = 1, 3$.
We show that $K$ is torsion-free in both cases:
\begin{itemize}
\item $a_4 = 1$. Here we have
\[
	Q^0 = \begin{bmatrix*}[r]
		 1 & 1 & 1 & 1 & 0 & 0 \\
		-2 & 0 & 0 & 0 & 1 & 1
	\end{bmatrix*}, \quad
	\alpha = (4, 0).
\]
Note $\alpha = 4 u_2$, thus $t \mid 4$ by \cref{lem:torsboundpower}.
Now \cref{lem:tors24} ensures that $K$ is torsion-free.

\item $a_4 = 2$. Here we have
\[
	Q^0 = \begin{bmatrix*}[r]
		 1 & 1 & 1 & 3 & 0 & 0 \\
		-2 & 0 & 0 & 0 & 1 & 1
	\end{bmatrix*}, \quad
	\alpha = (6, 0).
\]
Note $\alpha = 2 u_4$, thus $t \mid 2$ by \cref{lem:torsboundpower}.
Now \cref{lem:tors24} ensures that $K$ is torsion-free.
\end{itemize}

We have arrived at
Numbers~\ref{CY3fold:III-ii-1} and~\ref{CY3fold:III-ii-2}
from \cref{thm:CY3folds}.

\bigskip
\noindent
\emph{Case III-iii}.
From \cref{lem:combminrayCY}~(i) we obtain
\[
	u_1 = u_2 = u_3, \qquad
	u_5 = u_6, \qquad
	\det(u_1, u_6) = 1.
\]
A suitable admissible coordinate change brings the degree matrix
into the following form
\[
	Q^0 = \begin{bmatrix}
	1 & 1 & 1 & a_4 & 0 & 0 \\
	0 & 0 & 0 & b_4 & 1 & 1
	\end{bmatrix}, \qquad
	a_4, b_4 \in \ZZ_{\geq 1}.
\]
Moreover, \cref{prop:hypmovforCY} tells us
$\mu \in \cone(w_1, w_4)^\circ$ or $\mu \in \varrho_4$.
Let us first assume $\mu \in \cone(w_1, w_4)^\circ$.
According to \cref{cor:gitconeCY} we have GIT-cones
\[
    \eta_1 = \cone(w_1, w_4), \qquad
    \eta_2 = \cone(w_4, w_5),
\]
both of them giving rise to a smooth variety $X(\eta_i)$;
see \cref{CYSQMsmooth}.
We obtain that $K$ is torsion-free
by applying \cref{lem:torsfree} to $X(\eta_2)$.
Applying \cref{lem:combminrayCY}~(ii) gives $u_4 = u_1 + u_6 = (1,1)$.
We have arrived at Numbers~\ref{CY3fold:III-iii-1a}
and~\ref{CY3fold:III-iii-1b} from \cref{thm:CY3folds}.

The next step is to consider $\mu \in \varrho_4$.
\Cref{lem:raypowerCY} provides us with some $k \in \ZZ_{\geq 2}$
such that $\mu = k w_4$ holds.
Using \cref{rem:weaklyCY}~(i) gives $k b_4 = \alpha_2 = b_4 + 2$.
We conclude $b_4 \mid 2$.
This leads to one of the following two configurations
\[
	k = 3 \text{ and } b_4 = 1, \qquad
	k = 2 \text{ and } b_4 = 2.
\]
Suppose $k = 3$.
Using \cref{rem:weaklyCY}~(i) again shows
$3 a_4 = 3 + a_4$, equivalently $2 a_4 = 3$. A contradiction.
We must have $k = 2$ and $b_4 = 2$.
Here \cref{rem:weaklyCY}~(i) implies $2 a_4 = 3 + a_4$,
thus $a_4 = 3$.
We have
\[
	Q^0 = \begin{bmatrix}
	1 & 1 & 1 & 3 & 0 & 0 \\
	0 & 0 & 0 & 2 & 1 & 1
	\end{bmatrix}, \qquad
	\alpha = (6, 4).
\]
From $\alpha = 2 u_4$ we infer $t \mid 2$ by \cref{lem:torsboundpower}.
Thus \cref{lem:tors24} yields that $K$ is torsion-free.
This amounts to Number~\ref{CY3fold:III-iii-2}
from \cref{thm:CY3folds}.

\subsection*{Part IV}
This parts deals with the case of
$w_1, \dotsc, w_6$ being disposed
on four rays according to
the integer partition $1 + 1 + 2 + 2 = 6$.
A suitable admissible coordinate change
leads to one of the subsequent constellations:
\begin{figure}[H]
\centering
\renewcommand\thesubfigure{\roman{subfigure}}
\subcaptionbox*{IV-i}[0.225\linewidth]
{
\begin{tikzpicture}[scale=0.6]
\coordinate (u1) at (5,1);
\coordinate (u2) at (3,2);
\coordinate (u3) at (1,4);
\coordinate (u4) at (-2,4);

\foreach \i in {1,...,4}
{
\path let \p1=(u\i), \n1={veclen(\x1,\y1)} in
	coordinate (v\i) at (${1/\n1}*(u\i)$);
}

\foreach \v[remember=\v as \lastv (initially v1)] in {v1,v2,v3,v4}
	\path[top color=gray!25!, bottom color=gray!60!] 
		(0,0)--($3cm*(\lastv)$)--($3cm*(\v)$)--cycle;

\foreach \v/\dmax in {v1/1,v2/1,v3/2,v4/2}
{
	\draw[thick,color=black] (0,0) -- ($3cm*(\v)$);
	\foreach \d in {1,...,\dmax}
		\fill (${1cm*((1+(\d-1)*0.5)}*(\v)$) circle (0.5ex);
}

\end{tikzpicture}   
} %
\subcaptionbox*{IV-ii}[0.225\linewidth]
{
\begin{tikzpicture}[scale=0.6]
\coordinate (u1) at (5,1);
\coordinate (u2) at (3,2);
\coordinate (u3) at (1,4);
\coordinate (u4) at (-2,4);

\foreach \i in {1,...,4}
{
\path let \p1=(u\i), \n1={veclen(\x1,\y1)} in
	coordinate (v\i) at (${1/\n1}*(u\i)$);
}

\foreach \v[remember=\v as \lastv (initially v1)] in {v1,v2,v3,v4}
	\path[top color=gray!25!, bottom color=gray!60!] 
		(0,0)--($3cm*(\lastv)$)--($3cm*(\v)$)--cycle;

\foreach \v/\dmax in {v1/1,v2/2,v3/1,v4/2}
{
	\draw[thick,color=black] (0,0) -- ($3cm*(\v)$);
	\foreach \d in {1,...,\dmax}
		\fill (${1cm*((1+(\d-1)*0.5)}*(\v)$) circle (0.5ex);
}

\end{tikzpicture}   
} %
\subcaptionbox*{IV-iii}[0.225\linewidth]
{
\begin{tikzpicture}[scale=0.6]
\coordinate (u1) at (5,1);
\coordinate (u2) at (3,2);
\coordinate (u3) at (1,4);
\coordinate (u4) at (-2,4);

\foreach \i in {1,...,4}
{
\path let \p1=(u\i), \n1={veclen(\x1,\y1)} in
	coordinate (v\i) at (${1/\n1}*(u\i)$);
}

\foreach \v[remember=\v as \lastv (initially v1)] in {v1,v2,v3,v4}
	\path[top color=gray!25!, bottom color=gray!60!] 
		(0,0)--($3cm*(\lastv)$)--($3cm*(\v)$)--cycle;

\foreach \v/\dmax in {v1/1,v2/2,v3/2,v4/1}
{
	\draw[thick,color=black] (0,0) -- ($3cm*(\v)$);
	\foreach \d in {1,...,\dmax}
		\fill (${1cm*((1+(\d-1)*0.5)}*(\v)$) circle (0.5ex);
}

\end{tikzpicture}   
} %
\subcaptionbox*{IV-iv}[0.225\linewidth]
{
\begin{tikzpicture}[scale=0.6]
\coordinate (u1) at (5,1);
\coordinate (u2) at (3,2);
\coordinate (u3) at (1,4);
\coordinate (u4) at (-2,4);

\foreach \i in {1,...,4}
{
\path let \p1=(u\i), \n1={veclen(\x1,\y1)} in
	coordinate (v\i) at (${1/\n1}*(u\i)$);
}

\foreach \v[remember=\v as \lastv (initially v1)] in {v1,v2,v3,v4}
	\path[top color=gray!25!, bottom color=gray!60!] 
		(0,0)--($3cm*(\lastv)$)--($3cm*(\v)$)--cycle;

\foreach \v/\dmax in {v1/2,v2/1,v3/1,v4/2}
{
	\draw[thick,color=black] (0,0) -- ($3cm*(\v)$);
	\foreach \d in {1,...,\dmax}
		\fill (${1cm*((1+(\d-1)*0.5)}*(\v)$) circle (0.5ex);
}

\end{tikzpicture}   
} %
\end{figure}

\noindent
\emph{Case IV-i}.
Here, \cref{prop:hypmovforCY} tells us $\mu \in \sigma_3$.
As a result, \cref{cor:gitconeCY} provides us with two GIT-cones
\[
	\eta_1 = \cone(w_2, w_3), \qquad
	\eta_2 = \cone(w_3, w_5).
\]
\Cref{CYSQMsmooth} ensures that the
associated varieties $X(\eta_1)$ and $X(\eta_2)$
both are smooth.
Let $v \in \ZZ^2$ denote the primitive
lattice vector lying on $\sigma_3$.
Consider $X(\eta_2)$. Applying \cref{lem:threegenerateCY,lem:2on1ray}
to the triples $w_3, w_4, w_5$ and $w_3, w_4, w_6$ yields
$u_5 = u_6$ and $\det(v, u_5) = 1$.
Thus we may apply a suitable admissible coordinate change such that
$v = (1,0)$ and $u_5 = u_6 = (0,1)$.
We apply \cref{lem:threegenerateCY} again, this time
to $w_1, w_5, w_6$ and $w_2, w_5, w_6$.
This shows that the first coordinate of both $u_1$ and $u_2$
equals one.
Now, consider $X(\eta_1)$.
We apply \cref{lem:threegenerateCY} to $w_1, w_3, w_4$,
hence obtain $u_1 = (1, -1)$.
Analogously, we obtain $u_2 = (1, -1)$, thus $u_1 = u_2$.
This contradicts $\sigma_1 \neq \sigma_2$.

\bigskip
\noindent
\emph{Case IV-ii}.
\Cref{prop:hypmovforCY} says $\mu \in \cone(w_2, w_4)$.
First, we assume $\mu \in \varrho_4 = \sigma_3$.
Then \cref{cor:gitconeCY} ensures $\lambda = \cone(w_2, w_5)$.
Let $v \in \ZZ^2$ be the primitive lattice vector on~$\sigma_2$.
Applying \cref{lem:threegenerateCY,lem:2on1ray} to
all four triples
\[
(w_2, w_3, w_5), \quad (w_2, w_3, w_6), \quad
(w_2, w_5, w_6), \quad (w_3, w_5, w_6)
\]
shows that $u_2, u_3, u_5, u_6$ are primitive,
thus $u_2 = u_3$ and $u_5 = u_6$.
Additionally we obtain $\det(u_2, u_5) = 1$.
\Cref{lem:threegenerateCY} again, 
this time applied to $w_1, w_5, w_6$,
tells us $\det(u_1, u_5) = 1$.
A suitable admissible coordinate change eventually amounts to
\[
	Q^0 = \begin{bmatrix*}[r]
	   1 & 1 & 1 & a_4 & 0 & 0 \\
	-b_1 & 0 & 0 & b_4 & 1 & 1
	\end{bmatrix*},
	\qquad
	a_4, b_1, b_4 \in \ZZ_{\geq 1}.
\]
From \cref{rem:weaklyCY}~(i) we infer
$\alpha = (a_4 + 3,\, b_4 - b_1 + 2)$.
\Cref{lem:raypowerCY} provides us with some $k \in \ZZ_{\geq 2}$ such that 
$\mu = k w_4$. In particular $a_4 \mid \alpha_1 = a_4 + 3$.
This implies $a_4 = 1, 3$.
Suppose $a_4 = 1$.
Then $k = 4$ holds.
This leads to
$4b_4 = \alpha_2 = b_4 - b_1 + 2$, hence
$3 b_4 = 2 - b_1$.
A contradiction to $b_1, b_4 \geq 1$.
We are left with $a_4 = 3$ and $k = 2$.
Inserting into $\alpha = k u_3$ gives
$2b_4 = b_4 - b_1 + 2$, thus
$b_4 = 2 - b_1$.
This forces $b_1 = 1$ and $b_4 = 1$
due to $b_1, b_4 \geq 1$.
Moreover $k = 2$ implies $t \mid 2$ by \cref{lem:torsboundpower}.
Thus $K$ is torsion-free according to \cref{lem:tors24}.
We have arrived at Number~\ref{CY3fold:IV-ii-1}
from \cref{thm:CY3folds}.

We turn to the case $\mu \notin \sigma_3$.
Here \cref{cor:gitconeCY} provides us with two GIT-cones
\[
	\eta_1 = \cone(w_2, w_4), \qquad
	\eta_2 = \cone(w_4, w_5).
\]
According to \cref{CYSQMsmooth} the according varieties $X(\eta_1)$
and $X(\eta_2)$ both are smooth.
Consider $X(\eta_2)$.
\Cref{lem:twofacesCY} applied to $w_4, w_5$ and $w_4, w_6$
yields $\det(u_4, u_5) = 1$ as well as $\det(u_4, u_6) = 1$.
Besides, \cref{lem:threegenerateCY,lem:2on1ray} applied to $w_1, w_5, w_6$
give us $\det(u_1, u_5) = 1$.
Now consider $X(\eta_1)$.
Let $v \in \ZZ^2$ be the primitive vector contained in~$\sigma_2$.
Applying \cref{lem:threegenerateCY,lem:2on1ray} to
$w_2, w_3, w_4$ and $w_2, w_3, w_5$ shows $\det(v, u_4) = 1$
and $\det(v, u_5) = 1$.
Now we apply an admissible coordinate change such that $v = (1,0)$
and $u_5 = (0,1)$ holds.
Taking the determintal equations from above into account
amounts to the following degree matrix
\[
	Q^0 = \begin{bmatrix*}[r]
	   1 & a_2 & a_3 & 1 & 0 & 0 \\
	-b_1 &   0 &   0 & 1 & 1 & 1
	\end{bmatrix*},
	\qquad
	a_2, a_3, b_1 \in \ZZ_{\geq 1}.
\]
We may assume $a_2 \leq a_3$.
From \cref{rem:weaklyCY}~(i) follows $\alpha_2 = 3 - b_1$.
\Cref{prop:hypmovforCY} guarantees that $\mu$ lives in the positive orthant,
hence $b_1 \leq 3$.
Furthermore, \cref{lem:twofacesCY} applied w.r.t\ $X(\eta_1)$ and
the pairs $w_2, w_5$ and $w_3, w_5$
shows $a_2, a_3 \mid \alpha_1$.
Applying \cref{lem:threegenerateCY} to $w_2, w_3, w_5$
shows $\gcd(a_2, a_3) = 1$.
Consequently $a_2 a_3 \mid \alpha_1 = a_2 + a_3 + 2$ holds.
We end up with $a_2 = 1$ and $a_3 = 1, 3$.

Let us discuss the case $a_3 = 1$.
Here specifying data looks as follows
\[
	Q^0 = \begin{bmatrix*}[r]
	   1 & 1 & 1 & 1 & 0 & 0 \\
	-b_1 & 0 & 0 & 1 & 1 & 1
	\end{bmatrix*},
	\quad
	\alpha = (4,\, 3 - b_1), \qquad
	b_1 \in \{1, 2, 3\}.
\]
Suppose $b_1 = 3$.
This implies $\alpha = (4, 0) = 4 u_2$.
\Cref{lem:torsboundpower,lem:tors24} yield
that~$K$ is torsion-free.
So $w_2 = w_3$ holds.
Note that $\alpha_2 = 0$ means $\mu \in \varrho_2$.
In this situation \cref{lem:w4onrho2CY} says $w_4 \in \varrho_2$.
A contradiction to $\sigma_2 \neq \sigma_3$.
So we have $b_1 = 1, 2$.
Observe $\mu \in \eta_1^\circ$.
Appyling \cref{lem:torsfree} to $X(\eta_2)$
guarantees that $K$ is torsion-free.
We end up with Numbers~\ref{CY3fold:IV-ii-2a}
to~\ref{CY3fold:IV-ii-3b} from \cref{thm:CY3folds}.

We turn to $a_3 = 3$.
Here we have $\alpha_1 = 6$.
According to \cref{lem:twofacesCY} applied to $X(\eta_1)$ and $w_3, w_4$,
there must be some monomial
$T_3^{l_3} T_4^{l_4}$ of degree $\mu$ because of $\det(u_3, u_4) = 3$.
As the second coordinate of $u_3$ vanishes,
$l_4 = \alpha_2 = 3 - b_1$ holds.
Inserting this into the equation $\alpha_1 = l_3 a_3 + l_4 a_4$
yields $3l_3 + 3 - b_1 = \alpha_1 = 6$.
This forces $b_1$ to be divisible by~$3$, hence $b_1 = 3$.
We arrive at the following data
\[
	Q^0 = \begin{bmatrix*}[r]
	 1 &  1 &  3 & 1 & 0 & 0 \\
	-3 &  0 &  0 & 1 & 1 & 1
	\end{bmatrix*}, \qquad
	\alpha = (6, 0).
\]
Observe that this grading does not admit any
monomial of the form $T_1^{l_1} T_4^{l_4}$
of degree $\mu$. 
Thus $\det(u_1, u_4) = 1$ by \cref{lem:twofacesCY}
applied to $X(\eta_1)$ and $w_1, w_4$.
A contradiction.

\bigskip
\noindent
\emph{Case IV-iii}.
\Cref{cor:gitconeCY} ensures $\lambda = \cone(w_3, w_5)$.
Let $v, v' \in \ZZ^2$ be the primitive ray generators
of $\sigma_2$, $\sigma_3$.
We may apply \cref{lem:threegenerateCY,lem:2on1ray} to at least one
of the triples $w_2, w_3, w_4$ and $w_2, w_4, w_5$.
From this we infer $\det(v, v') = 1$.
A suitable admissible coordinate change leads to
$v = (1,0)$ and $v' = (0,1)$.
Applying \cref{lem:threegenerateCY} to $w_2, w_3, w_6$
yields $w_6 = (-a_6, 1)$ for some $a_6 \in \ZZ_{\geq 1}$.
Similarly, one obtains $w_1 = (1, -b_1)$ with $b_1 \in \ZZ_{\geq 1}$.
Counter-clockwise orientation yields $\det(w_1, w_6) = 1 - a_6 b_1 \leq 0$.
We conclude $b_1 = a_6 = 1$, hence $w_1 = - w_6$.
This contradicts $\Eff(R)$ being pointed.

\bigskip
\noindent
\emph{Case IV-iv}.
We have $\mu \in \cone(w_3, w_4)$ by \cref{prop:hypmovforCY}.
Suppose $\mu \in \cone(w_3, w_4)^\circ$.
\Cref{CYSQMsmooth} allows us to apply \cref{lem:combminrayCY}~(ii).
From this we infer $u_3 = u_4$,
thus $\sigma_2 = \sigma_3$; a contradiction.
So we have $\mu \in \sigma_2 \cup \sigma_3$.
Taking the symmetry in the geometric constellation
of $w_1, \dotsc, w_6$ into account
a suitable admissible coordinate change
amounts to $\mu \in \sigma_2$.
\Cref{lem:combminrayCY} yields $u_1 = u_2$, $u_5 = u_6$
and $u_4 = u_1 + u_6$. Furthermore we obtain $\det(u_1, u_6) = 1$.
After applying another suitable admissible coordinate change 
the degree matrix is of the following form
\[
	Q^0 = \begin{bmatrix*}[r]
	1 & 1 & a_3 & 1 & 0 & 0 \\
	0 & 0 & b_3 & 1 & 1 & 1
	\end{bmatrix*}, \quad
	a_3, b_3 \in \ZZ_{\geq 1}.
\]
From $X$ being Calabi-Yau we infer $\alpha = (a_3 + 3,\, b_3 + 3)$;
see \cref{rem:weaklyCY}.
Besides \cref{lem:raypowerCY} provides us with some $k \in \ZZ_{\geq 2}$
such that $\mu = k w_3$ holds.
Altogether we obtain $(k-1) a_3 = 3 = (k-1) b_3$, hence $a_3 = b_3$.
This contradicts $\sigma_2 \neq \sigma_3$.

\subsection*{Part VII}
We work out the the constellation where the Cox ring
generator degrees $w_1, \dotsc, w_6$ lie on pairwise
different rays i.e.\ we have $\sigma_i = \varrho_i$ for all $i = 1, \dotsc, 6$.
\Cref{prop:hypmovforCY} says $\mu \in \cone(w_3, w_4)$.
After a applying a suitable admissible coordinate change
we have either $\mu \in \cone(w_3, w_4)^\circ$ 
or $\mu \in \varrho_3$.
\begin{figure}[H]
\centering
\renewcommand\thesubfigure{\roman{subfigure}}
\subcaptionbox*{VII-a: $\mu \in \cone(w_3, w_4)^\circ$}[0.3\linewidth]
{
\begin{tikzpicture}[scale=0.6]
\coordinate (u1) at (5,1);
\coordinate (u2) at (2,1);
\coordinate (u3) at (1,1);
\coordinate (u4) at (1,3);
\coordinate (u5) at (-1,10);
\coordinate (u6) at (-1,2);

\foreach \i in {1,...,6}
{
\path let \p1=(u\i), \n1={veclen(\x1,\y1)} in
	coordinate (v\i) at (${1/\n1}*(u\i)$);
}

\foreach \v[remember=\v as \lastv (initially v1)] in {v1,v2,v3,v4,v5,v6}
	\path[top color=gray!25!, bottom color=gray!60!] 
		(0,0)--($3cm*(\lastv)$)--($3cm*(\v)$)--cycle;

\foreach \v/\dmax in {v1/1,v2/1,v3/1,v4/1,v5/1,v6/1}
{
	\draw[thick,color=black] (0,0) -- ($3cm*(\v)$);
	\foreach \d in {1,...,\dmax}
		\fill (${1cm*((1+(\d-1)*0.5)}*(\v)$) circle (0.5ex);
}

\filldraw[fill=white, draw=black] ($2.5cm*0.5*(v3) + 2.5cm*0.5*(v4)$) circle (0.5ex);
\end{tikzpicture}   
} %
\subcaptionbox*{VII-b: $\mu \in \varrho_3$}[0.3\linewidth]
{
\begin{tikzpicture}[scale=0.6]
\coordinate (u1) at (5,1);
\coordinate (u2) at (2,1);
\coordinate (u3) at (1,1);
\coordinate (u4) at (1,3);
\coordinate (u5) at (-1,10);
\coordinate (u6) at (-1,2);

\foreach \i in {1,...,6}
{
\path let \p1=(u\i), \n1={veclen(\x1,\y1)} in
	coordinate (v\i) at (${1/\n1}*(u\i)$);
}

\foreach \v[remember=\v as \lastv (initially v1)] in {v1,v2,v3,v4,v5,v6}
	\path[top color=gray!25!, bottom color=gray!60!] 
		(0,0)--($3cm*(\lastv)$)--($3cm*(\v)$)--cycle;

\foreach \v/\dmax in {v1/1,v2/1,v3/1,v4/1,v5/1,v6/1}
{
	\draw[thick,color=black] (0,0) -- ($3cm*(\v)$);
	\foreach \d in {1,...,\dmax}
		\fill (${1cm*((1+(\d-1)*0.5)}*(\v)$) circle (0.5ex);
}

\filldraw[fill=white, draw=black] ($2.5cm*0.5*(v3) + 2.5cm*0.5*(v3)$) circle (0.5ex);
\end{tikzpicture}   
} %
\end{figure}

\noindent
\emph{Case VII-a}.
Here, we assume $\mu \in \cone(w_3, w_4)^\circ$.
According to \cref{cor:gitconeCY} 
the cones
\[
	\eta_1 = \cone(w_2, w_3), \qquad
	\eta_2 = \cone(w_3, w_4), \qquad
	\eta_3 = \cone(w_4, w_5)
\]
are GIT-cones leading to smooth varieties $X(\eta_i)$;
see also \cref{CYSQMsmooth}.
Let us consider $X(\eta_1)$.
\Cref{lem:twofacesCY} applied to $w_1, w_3$ and $w_2, w_3$
yields $\det(u_1, u_3) = 1$ and $\det(u_2, u_3) = 1$.
Thus a suitable admissible coordinate change leads to
\[
	Q^0 = \begin{bmatrix*}[r]
		   1 & 1 & 0 & - a_4 & -a_5 & -a_6 \\
		-b_1 & 0 & 1 &   b_4 &  b_5 &  b_6
	\end{bmatrix*}, \quad
	a_i, b_i \in \ZZ_{\geq 1}.
\]
Consider $X(\eta_3)$.
Applying \cref{lem:twofacesCY} to $w_4, w_5$ and $w_4, w_6$
gives $\det(u_4, u_5) = 1$ and $\det(u_4, u_6) = 1$.
Since $a_4 \neq 0$, this is equivalent to
\begin{equation}
\label{eq:CY:VII-a:w5w6}
	b_5 = \dfrac{a_5 b_4 - 1}{a_4}, \qquad
	b_6 = \dfrac{a_6 b_4 - 1}{a_4}.
\end{equation}
Now consider $X(\eta_2)$.
Applying \cref{lem:twofacesCY} to the pair $w_3, w_i$
for $i = 4, 5, 6$ shows that $\alpha_1$ is divisible
by each of $a_4, a_5, a_6$.
Moreover, \cref{lem:threegenerateCY} applied to
$w_2, w_i, w_j$ where $4 \leq i < j \leq 6$
ensures that $a_4, a_5, a_6$ are pairwise coprime.
Together with \cref{rem:weaklyCY}~(i) we obtain
\[
	a_4 a_5 a_6 \mid \alpha_1 = a_4 + a_5 + a_6 - 2.
\]
One quickly checks that this forces two of $a_4, a_5, a_6$
to equal one.
Suppose $a_5 = a_6 = 1$.
Then \cref{eq:CY:VII-a:w5w6} implies $b_5 = b_6$,
thus $u_5 = u_6$. A contradiction.
So we must have $a_4 = 1$, in particular
\begin{equation}
\label{eq:CY:VII-a:w5w6-2}
	b_5 = a_5b_4 - 1, \qquad b_6 = a_6 b_4 - 1.
\end{equation}
Furthermore, \cref{lem:twofacesCY} applied
to $w_2, w_j$ gives $b_j \mid \alpha_2$ for $j = 4, 5, 6$.
In addition, applying \cref{lem:threegenerateCY} 
to all triples $w_2, w_i, w_j$ where $4 \leq i < j \leq 6$
shows that $b_4, b_5, b_6$ are pairwise coprime.
Once again by \cref{rem:weaklyCY}~(i) we obtain
\begin{equation}
\label{eq:CY:VII-a:CYmu2}
	b_4 b_5 b_6 \mid \alpha_2 = b_4 + b_5 + b_6 + 1 - b_1.
\end{equation}
Note that the right-hand side is positive due to the position of $\mu$.
From this we deduce $b_4 b_5 b_6 \leq b_4 + b_5 + b_6$.
According to \cref{rem:sumprodforCY} this inequation implies
that either two of $b_4, b_5, b_6$ equal one
or $\{b_4, b_5, b_6\} = \{1,2,3\}$.

We exclude the first option. 
Here we have $b_5 \neq b_6$ by \cref{eq:CY:VII-a:w5w6-2},
thus $b_4 = 1$. 
However, we also have $a_i = 1$ for some $i \in \{5,6\}$.
Then again \cref{eq:CY:VII-a:w5w6-2} implies
$b_i = a_i - 1 = 0$. A contradiction.
So we have $\{b_4, b_5, b_6\} = \{1,2,3\}$.

Inserting into \cref{eq:CY:VII-a:CYmu2} amounts to $b_1 = 1$.
Currently the degree matrix has the form
\[
	Q^0 = \begin{bmatrix*}[r]
		 1 & 1 & 0 &  -1 & -a_5 & -a_6 \\
		-1 & 0 & 1 & b_4 &  b_5 &  b_6
	\end{bmatrix*}.
\]
Recall that $a_5 = 1$ or $a_6 = 1$ holds.
So we have $b_4 > b_5$ or $b_4 > b_6$
due to the counter-clockwise orientation of $w_4, w_5, w_6$.
From this we infer $b_4 \neq 1$,
hence $b_i = 1$ for some $i \in \{5, 6\}$.
We are left with the cases $b_4 = 2,3$.
With $b_4 = 3$, inserting into \cref{eq:CY:VII-a:w5w6-2} gives
$3 a_i - 1 = b_i = 1$. A contradiction to $a_i \in \ZZ_{\geq 1}$.
With $b_4 = 2$ we deduce $a_i = 1$ from \cref{eq:CY:VII-a:w5w6-2}.
In particular $w_i = (-1, 1) = - w_1$ holds.
A contradiction to $\Eff(R)$ being pointed.

\bigskip
\noindent
\emph{Case VII-b}.
Here, we assume $\mu \in \varrho_3$.
\Cref{cor:gitconeCY} provides us with two GIT-cones
\[
	\eta_1 = \cone(w_2, w_4), \qquad
	\eta_2 = \cone(w_4, w_5).
\]
Both of them give rise to a smooth variety $X(\eta_i)$;
see \cref{CYSQMsmooth}.
Consider~$X(\eta_2)$.
Applying \cref{lem:twofacesCY} to both pairs $w_4, w_5$ and $w_4, w_6$
yields $\det(u_4, u_5) = 1$ and $\det(u_4, u_6) = 1$.
A suitable admissible coordinate change leads to
\[
	Q^0 = \begin{bmatrix*}
	 a_1 &  a_2 &  a_3 & 1 & a_5 & 0 \\
	-b_1 & -b_2 & -b_3 & 0 &   1 & 1
	\end{bmatrix*}, \quad
	a_i, b_i \in \ZZ_{\geq 1}.
\]
\Cref{lem:raypowerCY} provides us with some $k \in \ZZ_{\geq 2}$
such that $\mu = k w_3$ holds.
In particular, we have $\alpha_1 = k a_3$.
Now consider $X(\eta_1)$.
\Cref{lem:threegenerateCY} applied to the triples $w_1, w_3, w_5$
and $w_2, w_3, w_5$ shows that $a_1 + b_1a_5$ and $a_2 + b_2 a_5$
both divide $k$.
Moreover, applying \cref{lem:threegenerateCY} to $w_1, w_2, w_5$
yields $\gcd(a_1 + b_1 a_5, a_2 + b_2 a_5) = 1$.
Together with \cref{rem:weaklyCY}~(i) we obtain
\[
	(a_1 + b_1 a_5) (a_2 + b_2 a_5) a_3 \mid \alpha_1
	= a_1 + a_2 + a_3 + a_5 + 1.
\]
We expand the left-hand side and give a rough estimation:
\begin{align*}
	(a_1 + b_1 a_5) (a_2 + b_2 a_5) a_3
	&= a_1 a_2 a_3 + a_1 a_3 a_5 b_2 + a_2 a_3 a_5 b_1 + a_3 a_5^2 b_1 b_2 \\
	&\geq a_1 + a_2 + a_3 + a_5
\end{align*}
Since $\gcd(n,\, n+1) = 1$ is true for every integer $n$,
this inequation shows that equality holds in  the above divisibility condition.
From this we infer
\[
	a_2 (a_1 a_3 - 1)  + a_1(a_3 a_5 b_2 - 1) + a_3(a_2 a_5 b_1 - 1)  +  a_5 (a_3 a_5 b_1 b_2 - 1) = 1.
\]
Observe that every summand on the left-hand side is non-negative,
hence precisely one of them equals one while the other vanish.
Since $a_1, a_2, a_3, a_5$ are non-zero,
the factor in the parenthesis vanishes whenever the whole summand vanishes.
There are two summands where $b_1$ shows up in the second factor.
At least one of those parenthesis must vanish, hence $b_1 = 1$.
Repeating this argument yields $b_2 = 1$ as well as $a_3 = 1$.
Similarly, we obtain $a_1 = 1$ or $a_2 = 1$.
Altogether we have $u_3 = (1, -b_3)$ and $u_i = (1, -1)$ where $i \in \{1,2\}$.
This implies $\det(u_i, u_3) = 1 - b_3 \leq 0$;
a contradiction to our assumption that $w_1, \dotsc, w_6$ are in
counter-clockwise order.

\section{Constructing general hypersurface Cox rings}
\label{sec:veritools}
This section is devoted to the construction
of general hypersurface Cox rings with 
prescribed specifying data.
First, we describe the toolbox 
for producing general hypersurface
Cox rings from \cite{HLM}*{Sec. 4} in outline.
Then we extend it with new explicit factoriality criterions for
general hypersurface rings;
see \cref{factfromSimplexforCY,prop:ufdcritblowup}.

\begin{construction}
\label{constr:hypersurfforCY}
Consider a linear, pointed, almost free
$K$-grading on the polynomial ring 
$S := \KK[T_1, \ldots, T_r]$ and the 
quasitorus action 
$H \times \bar Z \to \bar Z$,
where 
\[
	H 
	\ := \ 
	\Spec \, \KK[K],
	\qquad\qquad
	\bar Z 
	\ := \ 
	\Spec \, S 
	\ = \ 
	\KK^r.
\]
We write $Q \colon \ZZ^r \to K$,
$e_i \mapsto w_i := \deg(T_i)$ for the degree
map.
Assume that $\Mov(S) \subseteq K_\QQ$ is of 
full dimension and fix $\tau \in \Lambda(S)$ 
with $\tau^\circ \subseteq \Mov(S)^\circ$.
Set
\[
\hat {Z}
\ := \ 
\bar Z^{ss}(\tau),
\qquad \qquad 
Z
\ := \ 
\hat {Z} \quot H.
\]
Then $Z$ is a projective toric variety with 
divisor class group $\Cl(Z) = K$ 
and Cox ring $\mathcal{R}(Z) = S$.
Moreover, fix $0 \ne \mu \in K$, and 
for $g \in S_\mu$ set
$$
R_g  := S / \langle g \rangle,
\qquad
\bar X_g  :=  V(g)  \subseteq  \bar Z,
\qquad
\hat X_g  :=  \bar X_g \cap \hat Z,
\qquad
X_g  :=  \hat X_g \quot H  \subseteq  Z.
$$
Then the factor algebra $R_g$ inherits a 
$K$-grading from $S$ and the quotient  
$X_g \subseteq Z$ is a closed subvariety.
Moreover, we have
$$ 
X_g
\ \subseteq \
Z_g
\ \subseteq \ 
Z
$$
where $Z_g \subseteq Z$ is the 
minimal ambient toric variety
of $X_g$, that means the (unique) minimal 
open toric subvariety containing $X_g$. 
\end{construction}

The spread $\mu$-homogeneous polynomials form
an open subset $U_\mu \subseteq S_\mu$.
Moreover all polynomials $g \in U_\mu$ share
the same minimal ambient toric variety $Z_g$.
We call $Z_\mu := Z_g$, where $g \in U_\mu$,
the \emph{$\mu$-minimal ambient toric variety}.
The following propositions enable us to verify smoothness of
$Z_\mu$ and the general $X_g$ in a purely combinatiorial manner.

\begin{proposition}
\label{prop:muambsmoothforCY}
In the situation of Construction~\ref{constr:hypersurfforCY}
the following are equivalent.
\begin{enumerate}
\item The $\mu$-minimal ambient toric variety $Z_\mu$ is smooth.
\item For each $\gamma_0 \preceq \gamma$ with
	$\tau^\circ \in Q(\gamma_0)^\circ$ and
	$|Q^{-1}(\mu) \cap \gamma_0| \neq 1$
	the group $K$ is generated by $Q(\gamma_0 \cap \ZZ^r)$.
\end{enumerate}
\end{proposition}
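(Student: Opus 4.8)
The plan is to read smoothness of $Z_\mu$ off its describing fan, which I denote $\Sigma_\mu$, cone by cone. Since $Z_\mu = Z_g$ for any spread $g \in U_\mu$, and the minimal ambient toric variety $Z_g \subseteq Z$ is the union of those affine charts $Z_{\gamma_0}$, $\gamma_0 \in \rlv(\tau)$, that $X_g$ actually meets, the first task is to decide, for $\gamma_0 = \cone(e_i;\, i \in I) \in \rlv(\tau)$, whether the orbit $O_{P(\gamma_0^*)}$ of $Z$ meets the general spread hypersurface. (Here I read the condition $\tau^\circ \in Q(\gamma_0)^\circ$ in (ii) as $\tau^\circ \subseteq Q(\gamma_0)^\circ$, i.e.\ $\gamma_0 \in \rlv(\tau)$.) This orbit lifts under $\hat Z \to Z$ to $\{z;\, z_i \neq 0 \Leftrightarrow i \in I\}$, so testing whether $X_g$ meets it amounts to setting $T_j = 0$ for $j \notin I$ and asking whether the result vanishes somewhere with all $T_i \neq 0$, $i \in I$. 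The surviving monomials are precisely the $T^a$ with $a \in Q^{-1}(\mu) \cap \gamma_0$, and since $g$ is spread they all occur with general nonzero coefficients. Over the algebraically closed field $\KK$ a Laurent polynomial on the torus $(\KK^*)^I$ has empty zero set exactly when it is a single monomial; hence $O_{P(\gamma_0^*)} \cap X_g \neq \emptyset$ if and only if $|Q^{-1}(\mu) \cap \gamma_0| \neq 1$, the value $0$ putting the whole orbit into $X_g$ and any value $\geq 2$ giving a nonempty proper zero set.

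Next I would use the local toric smoothness criterion for the Cox construction: the chart $Z_{\gamma_0}$, equivalently the cone $P(\gamma_0^*)$, is smooth if and only if $Q(\gamma_0 \cap \ZZ^r) = \langle w_i;\, i \in I \rangle$ generates $K$. This is standard and, if desired, can be checked directly, since a point in the closed orbit of $Z_{\gamma_0}$ has $H$-isotropy $\bigcap_{i \in I} \ker \chi^{w_i}$, which is trivial exactly when the $w_i$, $i \in I$, generate $K$, and triviality of the isotropy along the orbit is what makes the quotient chart smooth. Combining the two ingredients is then pure bookkeeping on $\Sigma_\mu$, whose cones are the orbit-meeting $P(\gamma_0^*)$ together with all their faces. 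If $Z_\mu$ is smooth, every cone of $\Sigma_\mu$ is regular; in particular every orbit-meeting $P(\gamma_0^*)$ is regular, so $\langle w_i;\, i\in I\rangle = K$ for each $\gamma_0 \in \rlv(\tau)$ with $|Q^{-1}(\mu) \cap \gamma_0| \neq 1$, which is exactly (ii). Conversely, (ii) together with the criterion makes every orbit-meeting cone regular, and since every cone of $\Sigma_\mu$ is a face of such a cone and faces of regular cones are regular, all of $\Sigma_\mu$ is regular and $Z_\mu$ is smooth.

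I expect the genuine work to sit in the first step, the identification of the orbits met by the general spread hypersurface. One must exploit the spread hypothesis to guarantee that every degree-$\mu$ monomial supported on $I$ truly appears, so that the combinatorial count $|Q^{-1}(\mu) \cap \gamma_0|$ is the correct invariant and is independent of the chosen $g \in U_\mu$; and one must phrase the vanishing statement on the torus $(\KK^*)^I$, where a non-monomial Laurent polynomial always has zeros but a single monomial never does. A secondary, milder point is the face bookkeeping: condition (ii) constrains only the orbit-meeting cones, and one has to observe that the remaining cones of $\Sigma_\mu$ are faces of these and hence inherit regularity, so that no further generation hypotheses are needed for them.
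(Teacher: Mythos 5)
The paper itself contains no proof of this proposition --- it is recalled from \cite{HLM}*{Sec.~4} as part of the toolbox --- and your argument is essentially the one given in that source: first determine which toric strata a general spread hypersurface meets, showing the correct invariant is the count $|Q^{-1}(\mu)\cap\gamma_0|$ (zero puts the whole orbit in $X_g$, one gives a nonvanishing monomial on the stratum, at least two gives a nonempty proper zero set on the torus), then apply the standard Cox-coordinate smoothness criterion that the chart of $\gamma_0$ is smooth exactly when $Q(\gamma_0\cap\ZZ^r)$ generates $K$, and finish by observing that the remaining cones of the fan of $Z_\mu$ are faces of orbit-meeting cones and inherit regularity. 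One small imprecision: spread does not mean that every degree-$\mu$ monomial supported on $\gamma_0$ occurs in $g$, only that each such monomial lies in the convex hull of the occurring ones; however, the vertices of $\conv\bigl(Q^{-1}(\mu)\cap\gamma\bigr)\cap\gamma_0$, being a face of the fiber polytope, are vertices of the whole polytope and hence necessarily occur with nonzero coefficient, which is all that your zero/one/at-least-two dichotomy actually requires.
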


\begin{proposition} 
\label{cor:rk2bertiniforCY}
In the setting of Construction~\ref{constr:hypersurfforCY},
assume $\rk(K) = 2$ and that  $Z_{\mu} \subseteq Z$ is smooth.
If $\mu \in \tau$ holds, then $\mu$ is base point free.
Moreover, then there is a non-empty open subset of
polynomials $g \in S_\mu$ such that
$X_g$ is smooth.
\end{proposition}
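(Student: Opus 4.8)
The plan is to establish base point freeness of $\mu$ by a combinatorial analysis on the toric variety $Z$, and then to obtain smoothness of the general $X_g$ from the classical Bertini theorem applied on the smooth ambient space $Z_\mu$.

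First I would translate base point freeness into a condition on the degree map $Q$. Recall that $Z$ is covered by the affine charts $Z_{\gamma_0}$ indexed by the minimal cones $\gamma_0 \in \rlv(\tau)$, and that on $Z_{\gamma_0}$ precisely those Cox coordinates $T_i$ with $e_i \in \gamma_0$ are invertible. A monomial $T^a$ with $Q(a)=\mu$ defines a global section of $\OOO_Z(\mu)$, and this section is nowhere vanishing on $Z_{\gamma_0}$ exactly when $\mathrm{supp}(a) \subseteq \{i;\, e_i \in \gamma_0\}$, i.e.\ when $a \in Q^{-1}(\mu) \cap \gamma_0$. Hence $\mu$ is base point free if and only if $Q^{-1}(\mu) \cap \gamma_0 \neq \emptyset$ for every minimal $\gamma_0 \in \rlv(\tau)$. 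Since $Q^{-1}(\mu) \cap \gamma_0' \subseteq Q^{-1}(\mu) \cap \gamma_0$ whenever $\gamma_0' \preceq \gamma_0$, and every relevant face contains a minimal one, it suffices to check this for the minimal relevant faces.

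Next comes the key point, where $\rk(K)=2$, smoothness of $Z_\mu$, and $\mu \in \tau$ enter. As $\tau$ is full-dimensional, a minimal relevant $\gamma_0$ satisfies $\tau^\circ \subseteq Q(\gamma_0)^\circ$ with $Q(\gamma_0)$ two-dimensional; minimality then forces $\gamma_0 = \cone(e_a,e_b)$ with exactly one generator $w_a,w_b$ on each boundary ray of $Q(\gamma_0) = \cone(w_a,w_b)$, since dropping any further or any redundant $e_i$ would leave the cone $Q(\gamma_0)$ unchanged and still relevant. Now fix such a $\gamma_0$ and suppose for contradiction that $Q^{-1}(\mu) \cap \gamma_0 = \emptyset$. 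Then $|Q^{-1}(\mu) \cap \gamma_0| = 0 \neq 1$, so \cref{prop:muambsmoothforCY} applied to the smooth $Z_\mu$ yields that $w_a, w_b$ generate $K$; in particular their free parts $u_a, u_b$ form a $\ZZ$-basis of $\ZZ^2$. Because $\mu \in \tau \subseteq \cone(w_a,w_b)$, the free part $\alpha$ of $\mu$ lies in $\cone(u_a,u_b)$, so $\alpha = c_a u_a + c_b u_b$ with unique $c_a, c_b \in \ZZ_{\geq 0}$. As $w_a, w_b$ generate $K$, the element $\mu$ is some integral combination of $w_a, w_b$, and its free part pins this combination down uniquely to $c_a w_a + c_b w_b$; thus $\mu = c_a w_a + c_b w_b$ with $c_a, c_b \geq 0$. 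This gives a monomial in $Q^{-1}(\mu) \cap \gamma_0$, a contradiction. Hence $Q^{-1}(\mu) \cap \gamma_0 \neq \emptyset$ for every minimal relevant face, so $\mu$ is base point free.

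Finally, for the smoothness of the general $X_g$: since $\mu$ is base point free on $Z$, its restriction to the open subvariety $Z_\mu \subseteq Z$ is base point free as well, and $Z_\mu$ is smooth by hypothesis. The global sections of $\OOO_Z(\mu)$ are exactly $S_\mu$, so $\{X_g = V(g) \cap Z_\mu;\, g \in S_\mu\}$ is a base point free linear system on the smooth variety $Z_\mu$. As $\KK = \CC$ has characteristic zero, the classical Bertini theorem shows that $X_g$ is smooth for all $g$ in a non-empty open subset of $S_\mu$; intersecting with the open set $U_\mu$ of spread polynomials (which ensures $Z_g = Z_\mu$) yields the assertion. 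I expect the main obstacle to be the middle paragraph: one must recognize that the minimal relevant faces carry exactly two generators, so that the generation hypothesis supplied by \cref{prop:muambsmoothforCY} forces a $\ZZ$-basis, and then handle the torsion correctly — the non-negative integral lift of $\mu$ exists because its free part already determines the unique integral combination, which therefore automatically matches in the torsion component.
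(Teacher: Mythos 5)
Your overall route is the intended one: the paper itself does not prove this proposition but imports it from \cite{HLM}*{Sec.~4}, and the argument there runs exactly along your lines --- reduce base point freeness to the existence of a degree-$\mu$ lattice point in each minimal relevant face, use the smoothness criterion of \cref{prop:muambsmoothforCY} (with $|Q^{-1}(\mu)\cap\gamma_0|=0\neq 1$) to force $w_a,w_b$ to generate $K$ and then exhibit the monomial, and finally apply the classical characteristic-zero Bertini theorem to the base point free system on the smooth $Z_\mu$, shrinking to the spread polynomials so that $X_g\subseteq Z_g=Z_\mu$. Your handling of torsion is also correct: since $w_a,w_b$ generate the rank-two group $K$, the free parts form a basis of $\ZZ^2$, the coefficients of $\mu$ are pinned down by its free part, and nonnegativity follows from $\mu\in\tau\subseteq\cone(w_a,w_b)$.

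There is, however, one unjustified step: the assertion ``as $\tau$ is full-dimensional.'' Nothing in \cref{constr:hypersurfforCY} or in the statement forces $\dim(\tau)=2$; the construction admits any $\tau\in\Lambda(S)$ with $\tau^\circ\subseteq\Mov(S)^\circ$, and the paper itself instantiates it with a ray (e.g.\ $\tau'=\cone(w_3)$ in the verification of Numbers~3, 4 and~30). If $\tau$ is a ray, your classification of the minimal relevant faces breaks down: a single $\gamma_0=\cone(e_i)$ with $w_i$ spanning $\tau$, or a pair of weights lying on the ray $\tau$, can be a minimal element of $\rlv(\tau)$, and then $Q(\gamma_0)$ is one-dimensional, so your dichotomy never applies to it. The repair is cheap and worth recording: for such a face the group generated by $Q(\gamma_0\cap\ZZ^r)$ has rank at most one, hence cannot generate $K$ with $\rk(K)=2$, so \cref{prop:muambsmoothforCY} forces $|Q^{-1}(\mu)\cap\gamma_0|=1$, which is nonempty anyway; the remaining minimal faces are pairs with $\tau\subseteq\cone(w_a,w_b)$ two-dimensional, where your argument goes through verbatim (note $\tau=\overline{\tau^\circ}\subseteq\cone(w_a,w_b)$ holds in any dimension). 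With this supplementary case your proof is complete in the stated generality.
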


\begin{remark}
\label{rem:embcox}
\label{rem:genCRirredundantforCY}
In the situation of Construction~\ref{constr:hypersurfforCY}
asumme that $R_g$ is normal, factorially graded
and $T_1, \ldots, T_r$ define pairwise
non-associated $K$-primes in $R_g$. 
Then $R_g$ is an abstract Cox ring
and we find a GIT-cone $\lambda \in \Lambda(R_g)$
with $\tau^\circ \subseteq \lambda^\circ$
and $\hat X_g = \bar X^\sstable (\lambda)$.
This brings us into
the situation of \cref{constr:mdsCY,ambtv},
so we have
\[
\Cl(X_g) \ = \ K,
\qquad\qquad
\RRR(X_g) \ = \ R_g, \qquad \qquad
\tau^\circ \subseteq \Ample(X_g).
\]
Moreover, for any $g \in U_\mu$ the variables $T_1, \ldots, T_r$ form a minimal 
system of generators for all $R_g$ if and only if
we have $\mu \ne w_i$ for $i = 1, \ldots, r$.
\end{remark}

Constructing a general hypersurface Cox ring
with prescribed specifying data essentially means to
to find a suitable open subset $U \subseteq S_\mu$
such that $R_g$, where $g \in U$, satisfies
the conditions from the above remark.
In the subsequent text we present several criterions to check
these conditions.

\begin{proposition} 
\label{prop:varclassprimeforCY}
Consider the setting of Construction~\ref{constr:hypersurfforCY}.
For $1 \le i \le r$ denote by $U_i \subseteq S_\mu$ 
the set of all $g \in S_\mu$ such that $g$ is prime 
in $S$ and $T_i$ is prime in~$R_g$.
Then $U_i \subseteq S_\mu$ is open.
Moreover, $U_i$ is non-empty if and only if 
there is a $\mu$-homogeneous prime polynomial 
not depending on $T_i$.
\end{proposition}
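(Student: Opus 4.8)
The plan is to reduce both defining conditions of $U_i$ to ordinary primality of polynomials in polynomial rings, where openness and the existence statement become transparent. Write $S' := \KK[T_j;\ j \neq i]$ and, for $g \in S_\mu$, let $\overline{g} := g(T_1, \ldots, T_{i-1}, 0, T_{i+1}, \ldots, T_r) \in S'$ be its restriction to $T_i = 0$; note $\overline{g}$ is again $\mu$-homogeneous. The starting observation is the isomorphism
\[
  R_g / \langle T_i \rangle \ = \ S / \langle g, T_i\rangle \ \cong \ S' / \langle \overline{g}\rangle,
\]
coming from $S/\langle T_i\rangle \cong S'$ under $T_i \mapsto 0$. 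Since the grading of $S$ is pointed, $T_i$ has nonzero degree $w_i$ and is never a unit. I would then verify the ring-theoretic equivalence: for $g$ prime in $S$, the element $T_i$ is prime in $R_g$ if and only if $\overline{g}$ is prime in $S'$. Indeed, if $\overline g$ is prime then $\overline g \neq 0$, so $T_i \nmid g$, whence $T_i \neq 0$ in $R_g$, while $S'/\langle\overline g\rangle$ is a nonzero domain, so $\langle T_i\rangle$ is prime; conversely primality of $T_i$ forces $T_i \neq 0$, hence $\overline g \neq 0$, and the displayed quotient being a domain makes the nonzero non-unit $\overline g$ prime. This identifies
\[
  U_i \ = \ A \cap B, \qquad
  A := \{ g \in S_\mu;\ g \text{ prime in } S\}, \qquad
  B := \{ g \in S_\mu;\ \overline{g} \text{ prime in } S'\}.
\]

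For openness I would invoke that the locus of prime (equivalently, since $S$ is a UFD, irreducible) elements inside a finite-dimensional space of polynomials is open: its complement, the reducible-or-zero locus, is closed, being a finite union of images of multiplication maps $S_{\mu_1} \times S_{\mu_2} \to S_\mu$ over the splittings $\mu_1 + \mu_2 = \mu$ with $\mu_1, \mu_2 \neq 0$ in the weight monoid. Here pointedness guarantees that a homogeneous reducible element factors into homogeneous pieces and that only finitely many such splittings of $\mu$ occur, and each image is closed by properness of the projectivized multiplication $\PP(S_{\mu_1}) \times \PP(S_{\mu_2}) \to \PP(S_\mu)$. This makes $A$ open. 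Since $g \mapsto \overline{g}$ is a linear, hence continuous, map $S_\mu \to S'_\mu$ and $B$ is the preimage of the open prime locus in $S'_\mu$, the set $B$ is open as well, so $U_i = A \cap B$ is open.

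Finally, the existence statement is witnessed by essentially the same polynomial. If $U_i \neq \emptyset$, pick $g \in U_i$; then $\overline{g}$ is $\mu$-homogeneous, involves no $T_i$, and is prime in $S'$, hence prime in $S = S'[T_i]$, so $\overline g$ is the desired $\mu$-homogeneous prime polynomial not depending on $T_i$. Conversely, given a $\mu$-homogeneous prime polynomial $p$ with $p \in S'$, one has $\overline{p} = p$; since $p$ is prime in $S$ and lies in $S'$, the domain $S/\langle p\rangle \cong (S'/\langle p\rangle)[T_i]$ forces $S'/\langle p\rangle$ to be a domain, so $p$ is prime in $S'$ too. Thus $p \in A \cap B = U_i$, and $U_i \neq \emptyset$.

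The genuinely geometric input, and the step I expect to require the most care, is the openness of the prime locus in $S_\mu$ — concretely, the closedness of the reducible locus. The subtle point is justifying that reducible homogeneous elements split into homogeneous factors for the given finitely generated, possibly torsion-carrying, grading group $K$; this is exactly where the pointedness hypothesis enters, both to enable homogeneous factorizations and to keep the set of degree-splittings of $\mu$ finite, so that the reducible locus is a finite union of closed images.
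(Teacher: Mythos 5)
Your overall reduction is correct, and for what it is worth the paper itself contains no proof of this proposition --- it is recalled verbatim from the toolbox of \cite{HLM}*{Sec.~4} --- so your argument must stand on its own. Most of it does: the isomorphism $R_g/\langle T_i\rangle \cong S'/\langle \overline{g}\rangle$, the resulting identification $U_i = A\cap B$, and both implications of the non-emptiness statement are sound, including the two small ring-theoretic points they rest on (units of the pointed graded domain $R_g$ are constants, so $T_i$ is a non-unit --- and note that ``$T_i\nmid g$, whence $T_i\neq 0$ in $R_g$'' needs the one-line supplement that $g\mid T_i$ would force $g$ to be associated to $T_i$ and hence $\overline{g}=0$ --- and a prime of $S'$ stays prime in $S = S'[T_i]$).

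The genuine gap sits exactly where you placed your bets. You assert that pointedness makes every reducible $K$-homogeneous polynomial split into $K$-homogeneous factors, so that the reducible-or-zero locus is the finite union of images of the multiplication maps $S_{\mu_1}\times S_{\mu_2}\to S_\mu$ over splittings $\mu_1+\mu_2=\mu$ in $K$. This is false as soon as $K$ has torsion --- which Construction~\ref{constr:hypersurfforCY} expressly allows, and which occurs precisely where the proposition is applied in the paper: Numbers~2 and~7 have $K=\ZZ^2\times\ZZ/3\ZZ$. For a counterexample, take $K=\ZZ\times\ZZ/2\ZZ$ and $S=\KK[T_1,\dotsc,T_4]$ with $\deg T_1=\deg T_2=(1,\bar{0})$ and $\deg T_3=\deg T_4=(1,\bar{1})$; this grading is pointed and almost free. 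Then $g=T_1^2+T_3^2$ is homogeneous of degree $\mu=(2,\bar{0})$ and reducible, namely $g=(T_1+\sqrt{-1}\,T_3)(T_1-\sqrt{-1}\,T_3)$, but neither factor is $K$-homogeneous, and $g$ lies in the image of no multiplication map over a $K$-splitting of $\mu$: the only nonzero components in question are $S_{(1,\bar{0})}$ and $S_{(1,\bar{1})}$, whose products are quadratics in $T_1,T_2$ alone, respectively in $T_3,T_4$ alone. So the set you prove closed is strictly smaller than the reducible-or-zero locus, and openness of $A$ (and of $B$, which rests on the same claim inside $S'$) remains unproven in exactly the torsion cases that matter. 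The repair is routine and preserves your structure: run the factorization argument with respect to the coarsened grading by the torsion-free quotient of $K$ (equivalently, by a $\ZZ$-grading obtained from a linear form positive on the pointed weight cone). Pointedness of the $K$-grading forces all free parts of the $w_i$ to be nonzero, so this coarse grading is again pointed; being torsion-free it is orderable, so factors of coarsely homogeneous elements are coarsely homogeneous; there are finitely many splittings $\alpha=\alpha_1+\alpha_2$ of the free part $\alpha$ of $\mu$ into nonzero elements of the weight monoid; and the reducible-or-zero locus of $S_\mu$ is the intersection of the linear subspace $S_\mu$ with the finite union of the closed images of the multiplication maps between the coarse components. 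With that substitution your proof goes through.
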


By a \emph{Dolgachev polytope}
we mean a convex polytope 
$\Delta \subseteq \QQ_{\geq 0}^r$ 
of dimension at least four
such that 
each coordinate hyperplane of $\QQ^r$ 
intersects $\Delta$ non-trivially
and
the dual cone of $\cone(\Delta - u; \ u \in \Delta_0)$ 
is regular for each one-dimensional face 
$\Delta_0 \preceq \Delta$.

\begin{proposition} 
\label{prop:ufdcritforCY}
In the situation of Construction~\ref{constr:hypersurfforCY},
there is a non-empty open subset of polynomials 
$g \in S_\mu$ such that the ring $R_g$ is factorial
provided that one of the following conditions 
is fulfilled:
\begin{enumerate}
\item 
$K$ is of rank at most $r-4$ and torsion free,
there is a $g \in S_\mu$ such that
$T_1, \ldots, T_r$ define primes
in $R_g$, we have $\mu \in \tau^\circ$ 
and $\mu$ is base point free on~$Z$.
\item
The set 
$\conv(\nu \in \ZZ_{\geq 0}^r; \ Q(\nu) = \mu)$
is a Dolgachev polytope.
\end{enumerate}
\end{proposition}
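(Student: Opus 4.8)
The plan is to handle the two conditions by different mechanisms, in each case producing the open set $U \subseteq S_\mu$ as a finite intersection of open loci on which the separate requirements for an abstract Cox ring hold, and only afterwards upgrading factorial grading to ordinary factoriality. The common preliminary step is to invoke \cref{prop:varclassprimeforCY}: the locus $U_i \subseteq S_\mu$ where $g$ is prime and $T_i$ is prime in $R_g$ is open, and is non-empty precisely when there is a $\mu$-homogeneous prime polynomial independent of $T_i$. Under (i) this non-emptiness is granted by the hypothesis that some $g$ makes all $T_1, \dotsc, T_r$ prime, while under (ii) it follows from the assumption that every coordinate hyperplane of $\QQ^r$ meets $\Delta = \conv(\nu \in \ZZ_{\geq 0}^r;\ Q(\nu) = \mu)$, which yields a $\mu$-monomial not divisible by $T_i$. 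Intersecting the $U_i$ with the smoothness locus furnished by \cref{prop:muambsmoothforCY} and \cref{cor:rk2bertiniforCY} (or its higher-rank variant), I obtain an open $U$ on which $R_g$ is normal, has only constant homogeneous units, and the $T_i$ are pairwise non-associated $K$-primes. By \cref{rem:embcox} it then remains only to verify that the $K$-grading of $R_g$ is factorial; ordinary factoriality follows from this together with torsion-freeness of $K$, since a Cox ring is a UFD exactly when its divisor class group is torsion free.

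For condition (i) the heart of the matter is to show $\Cl(X_g) \cong K$ for general $g$. Here I would use a Grothendieck--Lefschetz comparison of class groups: since $\rank(K) \le r-4$ forces $\dim X_g = r-1-\rank(K) \ge 3$, and $\mu \in \tau^\circ$ is ample and base point free, a general $X_g$ is smooth and the restriction $\Cl(Z_\mu) \to \Cl(X_g)$ is an isomorphism. As $\Cl(Z_\mu) = K$, the natural map $K \to \Cl(X_g)$ sending $w_i$ to the class of the restricted toric divisor is an isomorphism, which is exactly the assertion that the $K$-grading on $R_g$ is factorial. Torsion-freeness of $K$ then promotes this to $R_g$ being a UFD, finishing (i). The main obstacle in this half is supplying the Grothendieck--Lefschetz input: one must certify that the ambient $Z_\mu$ and the general member $X_g$ have singular loci of codimension large enough for the class-group comparison, which is where smoothness of $Z_\mu$ from \cref{prop:muambsmoothforCY} and base point freeness from \cref{cor:rk2bertiniforCY} enter.

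For condition (ii) the Newton polytope $\Delta$ itself governs factoriality, and I expect this to be the genuinely harder half. The idea is that a general $g \in S_\mu$ has Newton polytope $\Delta$, so $X_g$ appears as a general hypersurface in the toric variety attached to $\Delta$; the hypothesis $\dim \Delta \ge 4$ again secures $\dim X_g \ge 3$, because the fibre $Q^{-1}(\mu) \cap \ZZ_{\geq 0}^r$ has dimension at most $r - \rank(K)$, whence $\dim X_g = r-1-\rank(K) \ge \dim\Delta - 1 \ge 3$. Meanwhile the requirement that the dual cone of $\cone(\Delta - u;\ u \in \Delta_0)$ be regular for every edge $\Delta_0 \preceq \Delta$ forces the toric strata that a general $X_g$ actually meets to consist of smooth points, so that $R_g$ is locally factorial along them. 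I would then localise the factoriality question to these strata and conclude by a patching argument, reducing the global statement to the one-dimensional face data. The principal difficulty is precisely this reduction: one must show that controlling only the edges of a Dolgachev polytope propagates to global factoriality, i.e.\ that the edge-regularity conditions suffice to make every prime divisor on $X_g$ principal and hence to kill all of $\Cl(X_g)$.

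Throughout, I would keep the logic non-circular by deriving $K$-factoriality directly from the class-group statement $\Cl(X_g) \cong K$ rather than from \cref{rem:embcox}, using the latter only to record the resulting identification $\mathcal{R}(X_g) = R_g$ once factorial grading is in hand. The two conditions are thus unified by the same endgame, and differ only in how the isomorphism $K \cong \Cl(X_g)$ is produced: via ample base point free positivity in (i), and via the combinatorics of the Newton polytope in (ii).
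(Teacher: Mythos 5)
The first thing to note is that the paper contains no proof of this proposition at all: it is recalled verbatim as part of the toolkit imported from \cite{HLM}*{Sec.~4}, where condition~(ii) is precisely the hypothesis of Dolgachev's theorem on Newton polyhedra and factorial rings (whence the name ``Dolgachev polytope''), and condition~(i) rests on the Grothendieck--Lefschetz theorem for divisor class groups of \emph{normal} projective varieties due to Ravindra and Srinivas. Measured against that intended argument, your proposal has genuine gaps in both halves. For~(ii) you do not actually give a proof: you yourself flag the reduction from the edge data of $\Delta$ to global factoriality as ``the principal difficulty'' and leave it open, and the mechanism you sketch cannot close it. Regularity of the dual edge cones gives at best \emph{local} factoriality of $\bar X_g = \Spec R_g$ along the relevant strata, and local factoriality is far weaker than $R_g$ being a UFD: the latter means $\Cl(\bar X_g) = 0$, a global condition (a smooth affine hypersurface is locally factorial yet usually has nontrivial class group), so no patching over strata can conclude without a global Lefschetz-type induction --- which is exactly the content of Dolgachev's theorem, cited rather than reproved in \cite{HLM}. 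Moreover your ``common endgame'' (factorially graded plus $K$ torsion free implies factorial) is unavailable in~(ii), where $K$ is allowed to have torsion: the paper applies~(ii) for instance to Number~2, with $K = \ZZ^2 \times \ZZ/3\ZZ$. Dolgachev's theorem yields factoriality of the ungraded ring directly, with no detour through $\Cl(X_g)$.

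For~(i) the Lefschetz input is applied outside its range. The hypotheses grant neither smoothness of $Z_\mu$ nor of $X_g$: \cref{prop:muambsmoothforCY} is an equivalence whose condition can simply fail, and \cref{cor:rk2bertiniforCY} presupposes $\rk(K) = 2$ and $Z_\mu$ smooth, neither of which is assumed in~(i); indeed the paper later invokes~(i) with a one-dimensional chamber $\tau' = \cone(w_3)$, where the ambient toric variety and the hypersurface are genuinely singular. For merely normal $Z$ one must use the class-group version of Grothendieck--Lefschetz, and that theorem produces the isomorphism $\Cl(Z) \cong \Cl(X_g)$ only for \emph{very general} $g$, i.e.\ outside countably many proper closed subsets, whereas the proposition asserts a nonempty Zariski-open subset; bridging ``very general'' to ``open'' is the real work in~(i), and your proposal is silent on it. Finally, the inference ``$\Cl(X_g) \cong K$, hence the $K$-grading of $R_g$ is factorial'' presupposes that $R_g$ is the Cox ring of $X_g$ --- essentially the statement to be proved. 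Avoiding \cref{rem:embcox} here, as you announce, requires an actual mechanism (for example the characterization of Cox rings of neatly embedded subvarieties, using that $\imath^*$ is an isomorphism and that the $T_i$ restrict to pairwise nonassociated primes), not the class-group computation alone. The sound parts of your plan --- openness and nonemptiness of the primality loci via \cref{prop:varclassprimeforCY}, the dimension count $\dim X_g = r - 1 - \rank(K) \geq 3$, and the implication ``$K$-factorial with $K$ torsion free $\Rightarrow$ factorial'' --- are correct, but they form only the easy perimeter of the argument.
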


We combine the concepts of
algebraic modifications \cite[Sec. 4.1.2]{ADHL}
and $\Sigma$-homogenizations~\cite{HaMaWr}
to provide further factoriality criterions
for graded hypersurface rings.
These will apply to several cases where
the relation degree lies on the boundary of the
moving cone.
Let us first briefly recall the notion of
polynomials arising from Laurent polynomials 
by homogenization with respect to a lattice fan
from~\cite{HaMaWr}.

\begin{remark}
\label{reminder:Sigmadegree}
Let $\Sigma$ be a complete lattice fan in $\ZZ^n$
and $v_1, \dotsc, v_r$ the primitive lattice vectors
generating the rays of $\Sigma$.
Consider the mutually dual exact sequences
\[ \xymatrix@C=4em{
0 \ar[r] & L \ar[r] & \ZZ^r \ar[r]^{P}_{e_i \mapsto v_i} & \ZZ^n \\
0 & K \ar[l] & \ar[l]_Q \ZZ^r  & \ar[l]_{P^*} \ZZ^n & \ar[l] 0
}
\]
This induces a pointed $K$-grading on the polynomial algebra
$S = \KK[T_1, \dotsc, T_r]$ via $\deg(T_i) \coloneqq Q(e_i) \in K$.
For any $w \in K$ we denote $S_w \subseteq S$ for the finite-dimensional vector
space of homogeneous polynomials of degree $w$.

Moreover, fix a lattice polytope $B \subseteq \QQ^n$ and set
\[
	a(\Sigma) \coloneqq (a_1, \dotsc, a_r) \in \ZZ^r, \qquad
	a_i \coloneqq - \min_{u \in B} \langle u, v_i \rangle.
\]
We call $\mu = Q(a(\Sigma)) \in K$ the \emph{$\Sigma$-degree} of $B$.
\index{$\Sigma$-degree}
Besides $\mu \in K = \Cl(Z)$ regarded as a divisor class is base point free if
$\Sigma$ refines the normal fan of $B$.
The \emph{$\Sigma$-homogenization} of a
Laurent polynomial $f \in \KK[T_1^{\pm 1}, \dotsc, T_n^{\pm 1}]$
with Newton polytope $B(f)$ equal to~$B$ 
is the $\mu$-homogeneous polynomial $g = T^{a(\Sigma)} p^* f \in S$
where $p: \TT^r \rightarrow \TT^n$ is
the homomorphism of tori associated with~$P$.
Each spread polynomial $g \in S_{\mu}$ arises
as $\Sigma$-homogenization of a Laurent polynomial
\index{$\Sigma$-homogenization}
$f$ with $B(f) = B$.

Let $\Sigma_1$, $\Sigma_2$ be lattice fans
refining the normal fan $\Sigma(B)$ of $B$.
The vector space $V(B)$
of all Laurent polynomials of the form $\sum_{\nu \in B \cap \ZZ^r} a_\nu T^\nu$
fits into the following commutative diagram of vector space isomorphisms
\[
\xymatrix{
S_{\mu_1} \ar[rr]^\varphi & & S_{\mu_2} \\
& V(B) \ar[lu]^{f \mapsto T^{a(\Sigma_1)} p_1^* f} \ar[ru]_{f \mapsto T^{a(\Sigma_2)} p_2^* f} &
}
\]
Moreover, if $g \in S_{\mu_1}$ is spread,
then $\varphi(g) \in S_{\mu_2}$ is spread as well
and $g$, $\varphi(g)$ are homogenizations of a common Laurent polynomial
with respect to different fans $\Sigma_i$.
\end{remark}

We state an adapted version of \cite{ADHL}*{Thm. 4.1.2.2};
see also \cite{ADHL}*{Prop. 4.1.2.4}.

\index{$K$-factorial}

\begin{theorem}
\label{algmod}
Let $f \in \LP(n)$ be a Laurent polynomial and
$\Sigma_2 \preceq \Sigma_1 \preceq \Sigma(B(f))$
a refinement of fans in $\ZZ^n$.
Moreover,  let $g_i \in \KK[T_1, \dotsc, T_{r_i}]$ 
be the respective $\Sigma_i$-homogenization of~$f$ and
consider the $K_i$-graded algebra
\[
	R_{g_i} = \KK[T_1, \dotsc, T_{r_i}] / \langle g_i \rangle.
\]
Assume that $g_1, g_2$ are prime polynomials,
$T_1, \dotsc, T_{r_1}$ define $K_1$-primes in
$R_{g_1}$ and $T_1, \dotsc, T_{r_2}$ define $K_2$-primes in $R_{g_2}$.
Then the following statements are equivalent.

\begin{enumerate}
\item
The algebra $R_{g_1}$ is factorially $K_1$-graded.

\item 
The algebra $R_{g_2}$ is factorially $K_2$-graded.
\end{enumerate}
\end{theorem}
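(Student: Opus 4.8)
The plan is to read the passage from $g_1$ to $g_2$ as an \emph{algebraic modification} in the sense of \cite{ADHL}*{Sec. 4.1.2} and to transport factoriality of the grading across it by the cited statements. Write $v_1, \dotsc, v_{r_1}$ for the primitive generators of the rays of $\Sigma_1$. Since $\Sigma_2 \preceq \Sigma_1$, all these rays survive in $\Sigma_2$, which in addition carries new primitive generators $v_{r_1+1}, \dotsc, v_{r_2}$. By the commutative diagram in \cref{reminder:Sigmadegree}, both $g_1$ and $g_2$ are homogenizations of the one Laurent polynomial $f$; concretely $g_2$ is obtained from $g_1$ by adjoining the variables $T_{r_1+1}, \dotsc, T_{r_2}$ attached to the new rays, the exponents being fixed by $a(\Sigma_2)$. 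This is precisely the combinatorial data of a toric ambient modification $Z_2 \to Z_1$ restricting to a modification $X_{g_2} \to X_{g_1}$ of the two hypersurfaces, and it is this morphism to which the modification calculus of \cite{ADHL}*{Sec. 4.1.2} applies.

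The next step is to reduce to the elementary case $r_2 = r_1 + 1$ of a single new ray. I would factor the refinement into a chain
\[
	\Sigma_2 = \Theta_m \preceq \dotsb \preceq \Theta_0 = \Sigma_1,
\]
each step $\Theta_{j+1} \preceq \Theta_j$ adjoining exactly one ray and all terms refining $\Sigma(B(f))$. Let $h_j$ denote the $\Theta_j$-homogenization of $f$ and $L_j$ its grading group, so that $h_0 = g_1$ and $h_m = g_2$. Each single step is then exactly the situation of \cite{ADHL}*{Thm. 4.1.2.2}, whose conclusion is that $R_{h_j}$ is factorially $L_j$-graded if and only if $R_{h_{j+1}}$ is factorially $L_{j+1}$-graded. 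Concatenating these equivalences along the chain, and invoking \cite{ADHL}*{Prop. 4.1.2.4} to organise the composition, yields the desired equivalence between the endpoints $g_1$ and $g_2$.

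The step that needs the most care is securing the hypotheses of \cite{ADHL}*{Thm. 4.1.2.2} at every intermediate fan $\Theta_j$, since primality of $h_j$ and $L_j$-primality of the variables are assumed only at the two endpoints. For primality I would argue that all the $h_j$ are $\Theta_j$-homogenizations of the same $f$, so that irreducibility of the Laurent polynomial $f$, a fan-independent property forced by primality of $g_1$, propagates to primality of every $h_j$. The genuinely delicate point is the $L_j$-primality of the variables at the intermediate stages: this is the combinatorial primality condition encoded by the covering data of $\Theta_j$, and I would verify it stepwise, using \cite{ADHL}*{Prop. 4.1.2.4} to carry the property forward along the chain rather than re-deriving it for each $\Theta_j$ from scratch. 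Once these hypotheses are in place at each stage, the proof is a direct concatenation of the single-step equivalences.
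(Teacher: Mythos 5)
Your overall strategy---transporting factoriality of the grading along the toric ambient modification determined by $\Sigma_2 \preceq \Sigma_1$---is the right one, and it is in fact all the paper does: Theorem~\ref{algmod} is stated as a direct adaptation of \cite{ADHL}*{Thm. 4.1.2.2}, which applies to an \emph{arbitrary} refinement of fans in one stroke, with hypotheses required precisely at the two endpoints. The only substantive point an adaptation has to address is that the principal ideal $\langle g_2 \rangle$ is the shift of $\langle g_1 \rangle$ in the sense of \cite{ADHL}*{Constr. 4.1.2.1}, which holds because $g_1$ and $g_2$ are homogenizations of the same Laurent polynomial $f$; you do gesture at exactly this in your first paragraph. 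Had you stopped there and applied the cited theorem once, to the full refinement, your argument would be complete.

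Instead you factor the refinement into a chain of single-ray steps, and this is where a genuine gap opens. The theorem's hypotheses---primality of the homogenization and $K$-primality of the variables---are assumed only for $\Sigma_1$ and $\Sigma_2$, so your chain argument needs them at every intermediate fan $\Theta_j$, and your proposed derivation does not supply them. Irreducibility of $f$ does \emph{not} imply primality of the intermediate homogenizations $h_j$: since $h_j$ is divisible by no variable, primality of $h_j$ amounts to irreducibility of $p_j^{-1}(V(f))$ in the torus $\TT^{r_j}$, and when the intermediate grading group $L_j$ has torsion the kernel of $p_j$ is disconnected, so this preimage can decompose even though $V(f)$ is irreducible. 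Likewise, $L_j$-primality of the variables in $R_{h_j}$ is an additional geometric condition (irreducibility of the locus cut out by each $T_i$ on the hypersurface) that is neither implied by the endpoint assumptions nor ``carried forward'' by \cite{ADHL}*{Prop. 4.1.2.4}, which is not a propagation device of this kind. So as written, the single-step equivalences cannot legitimately be concatenated. The repair is simply to delete the decomposition and invoke \cite{ADHL}*{Thm. 4.1.2.2} directly for $\Sigma_2 \preceq \Sigma_1$, which is the paper's route.
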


Now let us bring this theorem in the context of general hypersurface rings.
We observe that factoriality is inherited between general hypersurface rings
with relation degrees stemming from a common lattice polytope.

\begin{proposition}
\label{algmod-general}
Let $B \subseteq \QQ^n$ be a lattice polytope,
$\Sigma_2 \preceq \Sigma_1 \preceq \Sigma(B)$ a refinement of fans in $\ZZ^n$,
and $\mu_i \in K_i$ the respective $\Sigma_i$-degree.
Assume that for $i = 1, 2$ there is a $\mu_i$-homogeneous prime polynomial $g_i$ 
and a non-empty open subset
$U_i \subseteq S_{\mu_i}$ such that
for all $g_i \in U_i$ the variables $T_1, \dotsc, T_{r_i}$
define $K_i$-primes in the $K_i$-graded algebra
\[
	 R_{g_i} = \KK[T_1, \dotsc, T_{r_i}] / \langle g_i \rangle.
\]
Then the following statements are equivalent.

\begin{enumerate}
\item
There is a non-empty open subset of polynomials $g_1 \in S_{\mu_1}$
such that $R_{g_1}$ is $K_1$-factorial.

\item
There is a non-empty open subset of polynomials $g_2 \in S_{\mu_2}$
such that $R_{g_2}$ is $K_2$-factorial.
\end{enumerate}
\end{proposition}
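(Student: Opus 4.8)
The plan is to reduce the statement to its pointwise counterpart \cref{algmod} by exploiting the linear isomorphism relating $S_{\mu_1}$ and $S_{\mu_2}$. By symmetry it suffices to prove the implication ``(i) $\Rightarrow$ (ii)''. Throughout I work with the vector space isomorphism $\varphi \colon S_{\mu_1} \to S_{\mu_2}$ from \cref{reminder:Sigmadegree}, which is assembled from the common Laurent polynomial space $V(B)$ via the two $\Sigma_i$-homogenization maps. Its two crucial features are: first, $\varphi$ carries spread polynomials to spread polynomials; second, a spread $g_1$ and its image $g_2 = \varphi(g_1)$ are the $\Sigma_1$- and $\Sigma_2$-homogenizations of one and the same Laurent polynomial $f$ with $B(f) = B$. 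Since $B(f) = B$ we have $\Sigma_2 \preceq \Sigma_1 \preceq \Sigma(B) = \Sigma(B(f))$, so such a pair $(g_1, g_2)$ is precisely the input required by \cref{algmod}.

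First I assemble the relevant open subsets of $S_{\mu_1}$. Assumption (i) provides a non-empty open $W_1 \subseteq S_{\mu_1}$ on which $R_{g_1}$ is $K_1$-factorial. The hypothesis, together with \cref{prop:varclassprimeforCY}, provides a non-empty open $U_1 \subseteq S_{\mu_1}$ on which $g_1$ is prime and all $T_1, \dotsc, T_{r_1}$ define $K_1$-primes in $R_{g_1}$, while the spread polynomials form a further non-empty open set $U_{\mu_1} \subseteq S_{\mu_1}$ (non-empty because $\mu_1$ is the $\Sigma_1$-degree of $B$, so generic homogenizations of Laurent polynomials with Newton polytope $B$ are spread). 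Because $S_{\mu_1}$ is a vector space, hence irreducible, the intersection
\[
	V_1 \coloneqq W_1 \cap U_1 \cap U_{\mu_1}
\]
is again non-empty and open. The analogous non-empty open locus $U_2 \subseteq S_{\mu_2}$, on which $g_2$ is prime and all $T_j$ define $K_2$-primes, is supplied by the hypothesis for the index $2$.

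Now I set $V_2 \coloneqq \varphi(V_1) \cap U_2$. Since $\varphi$ is a linear isomorphism, $\varphi(V_1)$ is non-empty and open, and intersecting it with the non-empty open $U_2$ inside the irreducible space $S_{\mu_2}$ keeps $V_2$ non-empty and open. I claim that $R_{g_2}$ is $K_2$-factorial for every $g_2 \in V_2$. Indeed, write $g_2 = \varphi(g_1)$ with $g_1 \in V_1$. Then $g_1$ is spread, prime, has all $T_j$ defining $K_1$-primes, and $R_{g_1}$ is $K_1$-factorial; moreover $g_2$ is spread (as $\varphi$ preserves spreadness), prime, and has all $T_j$ defining $K_2$-primes (as $g_2 \in U_2$); and $g_1, g_2$ are homogenizations of a common Laurent polynomial. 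Hence \cref{algmod} applies to $(g_1, g_2)$ and transports $K_1$-factoriality of $R_{g_1}$ to $K_2$-factoriality of $R_{g_2}$. This exhibits $V_2$ as the desired non-empty open subset, proving (ii); the reverse implication follows verbatim with $\varphi^{-1}$ in place of $\varphi$.

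The main point to watch is non-emptiness after the repeated intersections: the factoriality locus supplied by (i) is a priori unrelated to the spread and primality loci, and its image under $\varphi$ need not land inside $U_2$. Both gaps are bridged only because each of these loci is open and dense in the irreducible vector space $S_{\mu_i}$, so that all intersections stay non-empty, and because $\varphi$ simultaneously preserves spreadness and realizes $g_2$ as the homogenization of the very Laurent polynomial giving $g_1$, which is exactly what makes the pointwise transfer in \cref{algmod} applicable on the nose.
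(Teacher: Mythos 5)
Your proposal is correct and takes essentially the same route as the paper's own proof: transport along the isomorphism $\varphi$ of \cref{reminder:Sigmadegree}, shrink to a non-empty open locus of spread, prime polynomials whose variables define $K_i$-primes (using irreducibility of $S_{\mu_i}$ to keep intersections non-empty), and apply \cref{algmod} pointwise to the pair $(g_1,\varphi(g_1))$. The only cosmetic differences are that the paper cites \cite{HLM}*{Lem. 4.9} for openness of the prime locus where you invoke \cref{prop:varclassprimeforCY} (whose openness statement suffices, though its non-emptiness criterion is not what you need), and that the paper arranges $\varphi(U_1)=U_2$ by shrinking instead of intersecting $\varphi(V_1)$ with $U_2$.
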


\begin{proof}
We know that the subset $U_{\mu_i} \subseteq S_{\mu_i}$
of spread $\mu_i$-homogeneous polynomials
is open and non-empty.
According to \cref{reminder:Sigmadegree} there is
an isomorphism $\varphi: S_{\mu_1} \rightarrow~S_{\mu_2}$
of vector spaces
such that~$g$ and~$\varphi(g)$ arise as $\Sigma_i$-homogenization
of the same Laurent polynomial whenever $g \in U_{\mu_1}$.
Besides, by \cite{HLM}*{Lem. 4.9} the $\mu_i$-homogeneous
prime polynomials form an open subset of $S_{\mu_i}$,
which is non-empty by assumption.
Therefore, by suitably shrinking~$U_1$ and~$U_2$ we achieve
\begin{itemize}
\item
$\varphi(U_1) = U_2$, 

\item
$g_1$ and $g_2 \coloneqq \varphi(g_1)$ are respective
$\Sigma_i$-homogenizations of a common
Laurent polynomial whenever $g_1 \in U_1$,

\item
for every $g_1 \in U_1$
the ring $R_{g_1}$ is integral and
$T_1, \dotsc, T_{r_1} \in R_{g_1}$ are $K_1$-prime,

\item
for every $g_2 \in U_2$
the ring $R_{g_2}$ is integral and
$T_1, \dotsc, T_{r_2} \in R_{g_2}$ are $K_2$-prime.
\end{itemize}
In this situation \cref{algmod} tells us that
for any $g_1 \in U_1$ and $g_2 \coloneqq \varphi(g_1)$ we have
\[
	R_{g_1} \text{ is } K_1\text{-factorial}
	\; \Longleftrightarrow \;
	R_{g_2} \text{ is } K_2\text{-factorial}.
\]
Now let $V_1 \subseteq S_{\mu_1}$ be a non-empty open subset such that
$R_{g_1}$ is factorially graded for each $g_1 \in V_1$.
Then $V_2 \coloneqq \varphi(U_1 \cap V_1)$
is a non-empty open subset of $S_{\mu_2}$ and
$R_{g_2}$ is $K_2$-factorial for all $g_2 \in V_2$.
This proves ``(i) $\Rightarrow$ (ii)''.
The inverse implication is shown analogously.
\end{proof}

\begin{remark}
In the situation of Construction~\ref{constr:hypersurfforCY},
assume that $Z$ is a fake weighted projective space,
i.e., $Z$ is $\QQ$-factorial and $\Cl(Z) = K$ is of rank one.
Then $\mu \in \Cl(Z)$ ist base point free if and only if
there is an $l_i \in \ZZ_{\geq 1}$
with $\mu = l_i w_i$ for all $1 \leq i \leq 6$.
\end{remark}

According to \cite{HaMaWr}*{Rem. 5.8} general base point free hypersurfaces
in fake weighted projective spaces of dimension at least four
 always stem from Cox ring embeddings.

\begin{proposition}
\label{prop:factcritrk1}
In the situation of Construction~\ref{constr:hypersurfforCY},
suppose that $K$ is of rank one, $r \geq 5$ holds and
that for any $i = 1, \dotsc, r$ there is an $l_i \in \ZZ_{\geq 1}$
with $\mu = l_i w_i$.
Then there is a non-empty open subset of polynomials 
$g \in S_\mu$ such that the ring $R_g$ is normal and $K$-factorial,
and $T_1, \dotsc, T_r \in R_g$ are prime.
\end{proposition}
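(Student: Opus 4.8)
The plan is to recognise the ambient toric variety $Z$ as a fake weighted projective space of dimension at least four and then to quote the cited result that a general base point free hypersurface therein stems from a Cox ring embedding, reading off the asserted ring-theoretic properties from that identification.

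First I would pin down $Z$. Since $\rk(K) = 1$, the quotient $Z = \hat Z \quot H$ has dimension $r - \rk(K) = r - 1$, so its describing fan lives in $\ZZ^{r-1}$, and the number of its rays equals $\rk \Cl(Z) + \dim Z = 1 + (r-1) = r$. Hence $Z$ is complete of dimension $r - 1 \geq 4$ with exactly $\dim Z + 1$ rays; as the grading is pointed, any $r-1$ of the primitive ray generators are linearly independent and span a simplicial maximal cone, so $Z$ is $\QQ$-factorial. Together with $\rk \Cl(Z) = 1$ this is precisely the definition of a fake weighted projective space used in the preceding remark. Next, the hypothesis that for every $i$ there is some $l_i \in \ZZ_{\geq 1}$ with $\mu = l_i w_i$ is exactly the criterion from that remark under which $\mu \in \Cl(Z)$ is base point free.

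With these two observations I would invoke \cite{HaMaWr}*{Rem. 5.8}: a general base point free hypersurface in a fake weighted projective space of dimension at least four stems from a Cox ring embedding. This provides a non-empty open subset $U \subseteq S_\mu$ such that for each $g \in U$ the embedding $X_g \subseteq Z$ realises $R_g$ as the Cox ring of $X_g$, that is $\RRR(X_g) = R_g$. Since the Cox ring of a variety is normal, integral, finitely generated and factorially $K$-graded, and its chosen generators are pairwise non-associated $K$-primes, reading these properties off the identification $\RRR(X_g) = R_g$ yields exactly the asserted conclusions: $R_g$ is normal, $K$-factorial, and $T_1, \dotsc, T_r$ are prime in $R_g$. \Cref{rem:embcox} is the device that packages these properties back into the Cox ring embedding picture.

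To make the invocation airtight I would first ensure, from within the paper's toolkit, that the open conditions entering the reference are simultaneously met. Since $\mu = l_j w_j$ for every $j$, for each fixed $i$ the variables $T_j$ with $j \neq i$ already produce $\mu$-homogeneous polynomials, and a general such polynomial is prime; by \cref{prop:varclassprimeforCY} this makes each $U_i \subseteq S_\mu$ non-empty and open, so on the non-empty open set $\bigcap_i U_i$ the polynomial $g$ is prime and every $T_i$ is prime. Intersecting $U$ with this set, and with the open locus of spread polynomials, guarantees that $R_g$ is integral with $K$-prime variables, which is what lets one interpret \cite{HaMaWr}*{Rem. 5.8} as a genuine Cox ring embedding. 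The hard part is precisely this bookkeeping at the interface: verifying that spreadness, primality of $g$, primality of all the $T_i$, and factoriality can be arranged on one common non-empty open subset of $S_\mu$, and translating the geometric phrase that $X_g$ stems from a Cox ring embedding into the three ring-theoretic statements. Normality could also be obtained independently via Serre's criterion, since $\bar X_g = \V(g)$ is a Cohen--Macaulay hypersurface whose singular locus, by a Bertini argument, has codimension at least two for general $g$.
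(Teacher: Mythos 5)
Your proposal is correct and follows essentially the paper's own route: the paper states this proposition without a separate proof, presenting it as an immediate consequence of the preceding remark (that $\mu = l_i w_i$ for all $i$ characterizes base point freeness of $\mu$ on the fake weighted projective space $Z$) combined with the citation of \cite{HaMaWr}*{Rem.~5.8}, which is precisely the argument you give. Your extra bookkeeping --- the dimension and ray count identifying $Z$ as a fake weighted projective space of dimension $r-1 \geq 4$, and arranging primality of $g$ and of the $T_i$ on a common open subset via \cref{prop:varclassprimeforCY} --- merely makes explicit what the paper leaves implicit.
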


\begin{corollary}
\label{factfromSimplexforCY}
Let $n \geq 4$, $B \subseteq \QQ^n$ an integral $n$-simplex,
$\Sigma$ a fan in $\ZZ^n$ refining the normal fan of $B$,
and $\mu \in K$ the $\Sigma$-degree of $B$.
Assume that there is a $\mu$-homogeneous prime polynomial $g$ 
and a non-empty open subset
$U \subseteq S_{\mu}$ such that
for all $g \in U$ the variables $T_1, \dotsc, T_{r}$
define $K$-primes in the $K$-graded algebra
\[
	 R_{g} = \KK[T_1, \dotsc, T_{r}] / \langle g \rangle.
\]
Then there is a non-empty open subset of polynomials $g \in S_{\mu}$
such that $R_{g}$ is $K$-factorial.
\end{corollary}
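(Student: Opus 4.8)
The plan is to deduce the statement from the transfer result \cref{algmod-general} by comparing the given fan $\Sigma$ with the normal fan $\Sigma(B)$ of the simplex itself, the point being that over $\Sigma(B)$ the rank-one factoriality criterion \cref{prop:factcritrk1} applies directly.

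First I would set $\Sigma_1 \coloneqq \Sigma(B)$ and $\Sigma_2 \coloneqq \Sigma$, so that the refinement chain $\Sigma_2 \preceq \Sigma_1 \preceq \Sigma(B)$ required by \cref{algmod-general} reads $\Sigma \preceq \Sigma(B) \preceq \Sigma(B)$; this holds because $\Sigma$ refines $\Sigma(B)$ by assumption and every fan refines itself. Write $\mu_1$ for the $\Sigma(B)$-degree and $K_1$ for the associated grading group, while $\mu_2 = \mu$ and $K_2 = K$. The toric variety $Z_1$ belonging to $\Sigma(B)$ is the key object: since $B$ is an integral $n$-simplex, $\Sigma(B)$ is a complete simplicial fan with exactly $n+1$ rays in $\ZZ^n$, so $Z_1$ is a fake weighted projective space with $r_1 = n+1 \geq 5$ and $\rk(K_1) = 1$.

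The core step is to verify condition (i) of \cref{algmod-general} for $\Sigma_1$, which I would obtain from \cref{prop:factcritrk1}. Its hypotheses are met: $K_1$ has rank one, $r_1 \geq 5$, and---because $\Sigma(B)$ trivially refines the normal fan of $B$---the $\Sigma(B)$-degree $\mu_1$ is base point free on $Z_1$ by \cref{reminder:Sigmadegree}; the remark preceding \cref{prop:factcritrk1} then supplies $l_i \in \ZZ_{\geq 1}$ with $\mu_1 = l_i w_i$ for every $i$. Thus \cref{prop:factcritrk1} yields a non-empty open subset of $g_1 \in S_{\mu_1}$ for which $R_{g_1}$ is normal and $K_1$-factorial and the $T_i$ are prime. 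This is exactly assertion (i), and along the way it also furnishes the standing data that \cref{algmod-general} demands for $i = 1$: a $\mu_1$-homogeneous prime polynomial (normality makes $R_{g_1}$ integral, hence such $g_1$ is prime) together with an open set of polynomials making $T_1, \dotsc, T_{r_1}$ define $K_1$-primes. For $i = 2$ the corresponding data are provided verbatim by the hypotheses of the corollary.

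With both standing assumptions of \cref{algmod-general} in place and condition (i) established, the equivalence (i) $\Leftrightarrow$ (ii) immediately produces a non-empty open subset of $g \in S_\mu$ for which $R_g$ is $K$-factorial, which is the claim. The only real subtlety I anticipate is bookkeeping with the refinement convention and matching the index roles in \cref{algmod-general} correctly; the geometric content---that a simplex is precisely the polytope whose normal fan lands us in the rank-one situation covered by \cref{prop:factcritrk1}---is what makes the reduction work and would fail for a general lattice polytope.
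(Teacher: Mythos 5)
Your proposal is correct and takes essentially the same route as the paper: the paper likewise observes that, $B$ being a simplex, $\Sigma(B)$ defines a fake weighted projective space, obtains the factorial side together with the required primeness data from \cref{prop:factcritrk1}, and transfers factoriality across the refinement $\Sigma \preceq \Sigma(B)$ via \cref{algmod-general}. Your write-up only adds the explicit verification of the hypotheses (rank one, $r_1 = n+1 \geq 5$, the divisibilities $\mu_1 = l_i w_i$ via base point freeness, and the primeness of the general $g_1$) that the paper's two-line proof leaves implicit.
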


\begin{proof}
Since $B$ a is simplex,
the toric variety associatd with $\Sigma(B)$ is
a fake weighted projective space.
Now we apply \cref{algmod-general} to the
refinement $\Sigma \preceq \Sigma(B)$ and the
suitable open subset of polynomials provided by \cref{prop:factcritrk1}.
\end{proof}

In many situations we encouter it can be read of straight from the 
specifying data whether the conditions from \cref{factfromSimplexforCY} are met.

\begin{corollary}
\label{prop:ufdcritblowup}
Situation as in Construction~\ref{constr:hypersurfforCY}.
Assume that we have $r \geq 5$, $K = \ZZ^2$ and 
the degree matrix is of the form
\[
	Q = [w_1, \dotsc, w_{r+1}] = 
	\begin{bmatrix*}[r]
		 x_1 & \hdots &  x_{r} & 0 \\
		-d_1 & \hdots & -d_{r} & 1
	\end{bmatrix*}, \qquad
	x_i \in \ZZ_{\geq 1},\;
	d_i \in \ZZ_{\geq 0}.
\]
Then for any $\mu = (\mu_1, \mu_2) \in K = \ZZ^2$ satisfying the subsequent conditions
there is a non-empty open subset of polynomials 
$g \in S_\mu$ such that $R_g$ is factorial:
\begin{enumerate}
\item
for each $i$ there exists some $l_i \in \ZZ_{\geq 1}$ with $\mu = l_i x_i$,

\item
$\mu_2 = - \min_{\nu} \nu_1 d_1 + \dotsb + d_{r} \nu_{r}$
where the minimum runs over all lattice points
 $\nu = (\nu_1, \dotsc, \nu_r) \in \ZZ^r_{\geq 0}$
with $\nu_1 x_1 + \dotsb + \nu_{r} x_{r} = \mu_1$,

\item
there is some $g \in S_\mu$ such that $T_1, \dotsc, T_{r+1}$ define primes in $R_g$.
\end{enumerate}
\end{corollary}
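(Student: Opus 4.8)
The plan is to exhibit the data as an instance of \cref{factfromSimplexforCY}, which in turn feeds the rank-one criterion \cref{prop:factcritrk1} through the algebraic modification \cref{algmod-general}. The shape of $Q$ encodes a toric blow-up: deleting the last column $(0,1)$ leaves the rank-one weights $x_1, \dotsc, x_r$ of a fake weighted projective space $\PP$ of dimension $n = r-1$, and the extra ray $v_{r+1} = P(e_{r+1})$ subdivides the fan of $\PP$ into the fan $\Sigma$ of the ambient toric variety of \cref{constr:hypersurfforCY}. Because $r \geq 5$ we have $n = r-1 \geq 4$, as demanded by \cref{factfromSimplexforCY}.

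First I would produce the integral simplex. Condition~(i) says $x_i \mid \mu_1$ for every $i$, which is exactly the condition that the rank-one class $\mu_1$ on $\PP$ be Cartier; being effective it is then base point free, and its moment polytope $B \subseteq \QQ^n$ is an $n$-dimensional lattice polytope with one facet for each of the $n+1$ rays of $\PP$, hence an integral $n$-simplex whose normal fan is the defining fan $\Sigma(B)$ of $\PP$. By the blow-up description above, $\Sigma$ refines $\Sigma(B)$, so $B$ and $\Sigma$ form admissible input for \cref{factfromSimplexforCY}.

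Next I would verify that condition~(ii) says precisely that $\mu$ is the $\Sigma$-degree of $B$. Writing $a(\Sigma) = (a_1, \dotsc, a_{r+1})$ with $a_j = -\min_{u \in B}\langle u, v_j\rangle$, the first $r$ entries recover $\mu_1$, while for the exceptional ray the duality between $P$ and $Q$ identifies $-\min_{u \in B}\langle u, v_{r+1}\rangle$ with $-\min_\nu(\nu_1 d_1 + \dotsb + \nu_r d_r)$, the minimum taken over the lattice points $\nu \in \ZZ_{\geq 0}^r$ of $B$, namely those with $\nu_1 x_1 + \dotsb + \nu_r x_r = \mu_1$. Thus condition~(ii) is the equality $\mu_2 = a_{r+1}$, so $\mu = Q(a(\Sigma))$ is the $\Sigma$-degree of $B$. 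Finally, \cref{prop:varclassprimeforCY} shows that the locus of $g \in S_\mu$ with $g$ prime and all of $T_1, \dotsc, T_{r+1}$ prime in $R_g$ is open, and condition~(iii) makes it non-empty; this yields both a $\mu$-homogeneous prime polynomial and the open set on which the $T_i$ define $K$-primes. With $B$, $\Sigma$ and $\mu$ as above, all hypotheses of \cref{factfromSimplexforCY} hold, and it produces the asserted non-empty open subset of $g \in S_\mu$ for which $R_g$ is factorial.

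I expect the main obstacle to be the duality bookkeeping underlying the third paragraph: one must fix coordinates on $\ZZ^n$ in which $v_1, \dotsc, v_{r+1}$ appear as the Gale dual $P$ of $Q$, confirm that the added generator $v_{r+1}$ subdivides $\Sigma(B)$ so that $\Sigma$ is a genuine refinement realizing exactly the grading $Q$, and then check that the support-function value $a_{r+1}$ collapses to the integer-programming minimum of condition~(ii). Verifying that the moment polytope $B$ is genuinely a simplex, equivalently that $\PP$ is a fake weighted projective space, is the delicate point; everything else is a direct appeal to the machinery of \cref{reminder:Sigmadegree,algmod-general,factfromSimplexforCY}.
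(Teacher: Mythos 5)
Your proposal is correct and follows essentially the same route as the paper's proof: interpret the first $r$ weights as the weighted projective space $\PP(x_1,\dotsc,x_r)$ of dimension $r-1\geq 4$, use condition~(i) to get the base point free class $\mu_1$ whose divisorial polytope $B$ is an integral simplex with normal fan the fan of $\PP(x_1,\dotsc,x_r)$, realize $\Sigma$ as the stellar subdivision along $v_{r+1}=d_1v_1+\dotsb+d_rv_r$ (whose primitivity the paper secures via \cite{ADHL}*{Lemma 2.1.4.1} from the first $r$ columns of $Q$ generating $\ZZ^2$), identify condition~(ii) with $\mu$ being the $\Sigma$-degree of $B$, and conclude from condition~(iii), \cref{prop:varclassprimeforCY} and \cref{factfromSimplexforCY}. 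One bookkeeping correction in your third paragraph: condition~(ii) is the equality $\mu_2 = a_{r+1}-\sum_{i\leq r} a_i d_i$ (the full second coordinate of $Q(a(\Sigma))$, since the second entries of $w_1,\dotsc,w_r$ are $-d_i$), not $\mu_2=a_{r+1}$ alone; the support-function value $a_{r+1}$ and the integer-programming minimum differ by the shift $\langle a(\Sigma'), d\rangle$, exactly as the paper's computation records.
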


\begin{proof}
Observe that each $r-1$ of $x_1, \dotsc, x_r$ generate $\ZZ$ as a group since
the first coordinate of $w_{r+1}$ vanishes and
the $\ZZ^2$-grading associated with $Q$ is almost free according to
the assumptions made in \cref{constr:hypersurfforCY}.
Consider the weighted projective space
\[
	Z' \coloneqq \PP(x_1, \dotsc, x_r).
\]
Condition~(i) ensures that $\mu_1 \in \ZZ = \Cl(Z')$ regarded as a divisor class
on~$Z'$ is ample and base point free.
Choose some representative $D \in \WDiv(Z')$ of $\mu_1$.
The associated divisorial polytope $B \coloneqq B(D) \subseteq \QQ^{r-1}$
is a full-dimensional integral simplex.

The normal fan $\Sigma'$ of $B$ is a lattice fan in $\ZZ^{r-1}$ corresponding with $Z'$.
Write $v_1, \dotsc, v_r \in \ZZ^{r-1}$ for the primitive ray generators of $\Sigma'$.
Observe that the maps
\[
	P': \ZZ^r \rightarrow \ZZ^{r-1},\; e_i \mapsto v_i, \qquad \qquad
	Q': \ZZ^r \rightarrow \ZZ,\; e_i \mapsto x_i
\]
fit into a mutually dual pair of exact
sequences as shown in \cref{reminder:Sigmadegree}.
Now set
\[
	v_{r+1} \coloneqq d_1 v_1 + \dotsb + d_r v_r \in \ZZ^{r-1}, \qquad
	d \coloneqq (d_1, \dotsc, d_r) \in \ZZ^r.
\]
The second row of $Q$ encodes the relation statisfied by
$v_1, \dotsc, v_{r+1}$ thus
indicates that the following maps
constitute a pair of mutually dual sequences as well
\[
\begin{array}{ll}
	P: \ZZ^{r+1} \rightarrow \ZZ^n,\; e_i \mapsto v_i, &
	Q: \ZZ^{r+1} \rightarrow \ZZ^2,\; e_i \mapsto w_i.
\end{array}
\]
Since the first~$r$ columns of~$Q$ generate~$\ZZ^2$,
the vector $v_{r+1} \in \ZZ^{r-1}$ is primitive;
see \cite{ADHL}*{Lemma 2.1.4.1}.
This allows us to consider the stellar subdivision $\Sigma$ of $\Sigma'$
along~$v_{r+1}$.

We show that $\mu \in \ZZ^2$ is the $\Sigma$-degree $\mu_B$ of $B$.
First note that $\mu_1$ is the $\Sigma'$-degree of~$B$ by construction.
Consider
\[
	a' \coloneqq a(\Sigma') = (a_1', \dotsc, a_r'), \qquad
	a \coloneqq a(\Sigma) = (a_1, \dotsc, a_{r+1})
\]
from \cref{reminder:Sigmadegree}.
Since~$\Sigma$ arises from $\Sigma'$ by introducing an $(r+1)$-th ray,
we have $a_i = a_i'$ for $i = 1, \dotsc, r$.
From this we infer
\[
	\mu_1 = Q'(a') = a_1 x_1 + \dotsb + a_r x_r, \qquad
	\mu_B = Q(a) = a_1 w_1 + \dotsb + a_{r+1} w_{r+1}.
\]
As the first coordinate of $w_{r+1}$ vanishes,
we conclude that the first coordinate of~$\mu_B$
equals~$\mu_1$.
It remains to investigate the second coordinate of $\mu_B$.
We have
\[
	a_{r+1}
	= - \min_{u \in B} \langle u,\, v_{r+1} \rangle
	= - \min_{u \in B} \langle u,\, P'(d) \rangle 
	= - \min_{u \in B} \langle (P')^*u,\, d \rangle.
\]
Using this presentation of $a_{r+1}$,
the second coordinate of $\mu_B$ is given as
\[
	a_{r+1} - \sum_{i = 1}^r a_i d_i
	= - \min_{u \in B} \langle (P')^*u,\, d \rangle - \langle a',\, d \rangle
	= - \min_{u \in B} \langle (P')^*u + a',\, d \rangle.
\]
From condition~(ii) and the fact
that the lattice points $\nu \in \ZZ^r_{\geq 0}$ with $Q'(\nu) = \mu_1$
are precisely those of the form $\nu = (P')^*u + a'$,
where $u \in B \cap \ZZ^{r-1}$,
follows that the second coordinate of $\mu_B$ equals $\mu_2$.
Altogether we have verified $\mu = \mu_B$.

Note that condition (iii) ensures the existence of a $\mu$-homogeneous
prime polynomial; see \cref{prop:varclassprimeforCY}.
The above discussion combined with condition~(iii)
ensures that we may apply \cref{factfromSimplexforCY}
to $B$ and $\Sigma$
which finishes the proof.
\end{proof}

\section{Proof of Theorem~\ref{thm:CY3folds}: Verification}

The second mission in the proof of \cref{thm:CY3folds}
is to ensure that the list of specifying data given there
does not contain any superfluous items.
So we have to verify that all items from \cref{thm:CY3folds} are realized by
pairwise non-isomorphic smooth Calabi-Yau threefolds having a
(general) hypersurface Cox ring.

\medskip
Let us highlight generator and relation degrees
of a graded algebra as 
invariants of hypersurface Cox rings
that distinguish varieties with different specifying data;
see \cite{HaKeWo}*{Sec. 2} for details.

\begin{remark}
\label{rem:HFinvars}
Let $R = \bigoplus_{w \in K} R_w$ be an integral pointed $K$-graded algebra.
We denote $S(R) = \{w \in K;\, R_w \neq 0\}$.
An important invariant of $R$
is the \emph{set of generator degrees}
\[
	\Omega_R \coloneqq
	\left\{ w \in S(R);\, R_w \nsubseteq R_{< w} \right\} \subseteq K
\]
where $R_{< w}$ denotes the subalgebra of $R$ spanned by all homogeneous
componentes $R_{w'}$ such that $w = w' + w_0$ holds for some $0 \neq w_0 \in S(R)$.
In the situation of \cref{setting:rho2forCY}
the set of generator degrees
is given as
\[
	\Omega_{R} = \{w_1, \dotsc, w_r\} \subseteq K.
\]
The set of generator degrees is unique
and does not depend on a graded presentation of~$R$.
From this emerges another invariant:
Choose pairwise different $u_1, \dotsc, u_l \in K$ such that
$\Omega_R = \{u_1, \dotsc, u_l\}$
and set $d_i \coloneqq \dim_\KK R_{u_i}$.
By suitably reordering $u_1, \dotsc, u_l$ we achieve
$d_1 \leq \dotsc \leq d_l$.
We call $(d_1, \dotsc, d_l)$
the \emph{generator degree dimension tuple} of $R$.
\index{generator degree dimension tuple}
If two graded algebras are isomorphic,
then they have the same generator degree dimension tuple.

Moreover, if $R$ admits an irredundant graded presentation
$R = \KK[T_1, \dotsc, T_r] / \langle g \rangle$,
then the \emph{relation degree} $\mu = \deg(g) \in K$ is unique
\index{relation degree}
and does not depend on the choice of the minimal graded presentation.
\end{remark}

\begin{lemma}
\label{lem:reldeg}
Consider $n$-dimensional varieties $X_1$, $X_2$ with hypersurface
Cox rings having relation degree $\mu_1$ resp.\ $\mu_2$.
If $X_1$ and $X_2$ are isomorphic, then $\mu_1^n = \mu_2^n$ where~$\mu_i^n$
is the self-intersection number of $\mu_i$
regarded as a divisor class on $X_i$.
\end{lemma}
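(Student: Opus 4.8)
The plan is to exploit two facts: the relation degree of a hypersurface Cox ring is an \emph{intrinsic} invariant of the $\Cl(X)$-graded algebra $\mathcal{R}(X)$, and top self-intersection numbers are preserved under isomorphisms of varieties. Accordingly, I would first upgrade the geometric isomorphism $X_1 \cong X_2$ to an isomorphism of graded Cox rings, then transport the relation degree through it, and finally compare self-intersection numbers via the induced identification of divisor class groups.

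First I would recall that, $X_i$ being a Mori dream space with only constant homogeneous units and finitely generated $\Cl(X_i)$, its Cox ring $\mathcal{R}(X_i) = R_{g_i}$ is determined up to graded isomorphism by $X_i$, the grading group being canonically $\Cl(X_i)$. Hence an isomorphism $f \colon X_1 \to X_2$ induces a group isomorphism $f^* \colon \Cl(X_2) \to \Cl(X_1)$ together with a compatible isomorphism $\mathcal{R}(X_2) \to \mathcal{R}(X_1)$ of graded algebras, i.e.\ one carrying the homogeneous component of degree $w$ to that of degree $f^*(w)$. By \cref{rem:HFinvars} the relation degree of an irredundant hypersurface presentation is uniquely determined by the graded ring and independent of the chosen minimal presentation. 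Applying this to the graded isomorphism above forces the relation degree of $\mathcal{R}(X_2)$ to be sent to that of $\mathcal{R}(X_1)$, that is, $f^*(\mu_2) = \mu_1$ in $\Cl(X_1)$.

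It then remains to compare self-intersection numbers. As the $X_i$ are $\QQ$-factorial, each $\mu_i$ is $\QQ$-Cartier and its top self-intersection $\mu_i^n \in \QQ$ is well-defined. Since $f$ is an isomorphism, pullback preserves such intersection numbers, so $\mu_2^n = (f^*\mu_2)^n = \mu_1^n$, which is the assertion. \textbf{The main obstacle} is the compatibility underlying the first two steps: that the abstract isomorphism of varieties genuinely yields a \emph{degree-preserving} isomorphism of Cox rings under the canonical identification of the grading groups with the class groups, so that the intrinsic relation degree is transported to the intrinsic relation degree. Once this is in place, the intersection-theoretic conclusion that $f^*$ preserves top self-intersection is standard and routine.
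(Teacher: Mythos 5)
Your proposal is correct and follows essentially the same route as the paper: the paper likewise upgrades the isomorphism $\varphi\colon X_1\to X_2$ to the pair of pull-backs $(\varphi^*,\tilde\varphi^*)$ forming an isomorphism of $\Cl(X_i)$-graded Cox rings, invokes the uniqueness of the relation degree from \cref{rem:HFinvars} to get $\tilde\varphi^*(\mu_2)=\mu_1$, and concludes $\mu_1^n=\tilde\varphi^*(\mu_2)^n=\mu_2^n$. The compatibility you flag as the main obstacle is exactly what the paper takes for granted in asserting that the pull-backs constitute a graded algebra isomorphism, so no further work is needed beyond what you wrote.
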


\begin{proof}
Let $\varphi: X_1 \rightarrow X_2$ be an isomorphism.
Then the induced pull-back maps
\[
	\varphi^*: \mathcal{R}(X_2) \rightarrow \mathcal{R}(X_1), \qquad
	\tilde{\varphi}^*: \Cl(X_2) \rightarrow \Cl(X_1)
\]
form an isomorphism $(\varphi^*, \tilde{\varphi}^*)$
of $\Cl(X_i)$-graded algebras.
From this we deduce that the pull-back $\tilde{\varphi}^*(\mu_2) \in \Cl(X_2)$
of the relation degree $\mu_2 \in \Cl(X_2)$ of $\mathcal{R}(X_2)$
is the unique relation degree $\mu_1 \in \Cl(X_1)$ of $\mathcal{R}(X_1)$;
see also \cref{rem:HFinvars}.
Hence \mbox{$\mu_1^n = \tilde{\varphi}^*(\mu_2)^n = \mu_2^n$}.
\end{proof}

\begin{proof}[Proof of \cref{thm:CY3folds}: Verification]
We show that each item from \cref{thm:CY3folds} indeed stems from
a smooth Calabi-Yau threefold with a general hypersurface Cox ring.

Let $(Q, \mu, u)$ be specifying data as presented in \cref{thm:CY3folds}.
Consider the linear $K$-grading on
$S = \KK[T_1, \dotsc, T_6]$
given by $Q: \ZZ^6 \rightarrow K$.
We run Construction~\ref{constr:hypersurfforCY}
with the unique GIT-chamber $\tau \in \Lambda(S)$
containing $u$ in its relative interior $\tau^\circ$.
In doing so $u \in \Mov(S)^\circ$ guarantees
$\tau^\circ \subseteq \Mov(S)^\circ$.
In what follows we construct a non-empty
open subset $U \subseteq U_\mu$ of polynomials satisfying the
conditions from \cref{rem:embcox},
thereby obtaining
a smooth general Calabi-Yau hypersurface Cox ring.
This is done by starting with $U \coloneqq U_\mu$
and shrinking $U$ successively.

Since $\mu \neq w_i$ holds for all $i$,
Remark~\ref{rem:genCRirredundantforCY} ensures
that $T_1, \dotsc, T_6$ form a minimal system 
of generators for $R_g$, whenever $g \in U_\mu$.
We want to achieve $K$-primeness of $T_1, \dotsc, T_6 \in R$.
Here Numbers~\ref{CY3fold:I-tors1} and~\ref{CY3fold:III-i-3-tors}
have to be treated separately.
For all remaining items from \cref{thm:CY3folds} and any $1 \leq i \leq 6$
we find in \cref{tab:CY-primebin} on page~\pageref{tab:CY-primebin}
a $\mu$-homogeneous prime binomial
$T^{\kappa} - T^{\nu} \in S$ not
depending on $T_i$.
Thus, \cref{prop:varclassprimeforCY} allows us
to shrink $U$ such that $T_1, \dotsc, T_6$ define primes in $R_g$
for all $g \in U$.

\medskip
\noindent
\emph{Number~\ref{CY3fold:I-tors1}.}
For Number~2 observe that all the generator degrees
 $w_i = \deg(T_i)$ are indecomposable in the weight monoid
\[
	S(R)
	= \{ u \in K;\; R_u \neq 0 \}
	= \operatorname{Pos}_\ZZ(w_1, \dotsc, w_6)
	\subseteq K.
\]
Thus every $T_i \in R_g$ is $K$-irreducible.
As soon as we know that $R_g$ is $K$-factorial,
we may conclude that $T_i$ is $K$-prime.

\medskip
\noindent
\emph{Number~\ref{CY3fold:III-i-3-tors}.}
\Cref{tab:CY-primebin} on page~\pageref{tab:CY-primebin}
shows $\mu$-homogeneous prime binomials
$T^{\kappa} - T^{\nu} \in S$ not
depending on $T_i$
for $i = 1, \dotsc, 5$.
Thus, \cref{prop:varclassprimeforCY} allows us
to shrink $U$
such that $T_1, \dotsc, T_5$ define primes in $R_g$
for all $g \in U$.

Observe that $T_6$ defines a $K$-prime in $R_g$ if and only if
$h \coloneqq g(T_1, \dotsc, T_5, 0) \in S$ is $K$-prime.
Since $S$ is a UFD, thus $K$-factorial, the latter is equivalent
to $h \in S$ being $K$-irreducible.
The only monomials of degree $\mu$ not depending on $T_6$
are $T_4^3$ and $T_5^3$, hence $h = aT_4^3 - b T_5^3$.
Note that $T_4^3$, $T_5^3$ are vertices of the polytope
\[
	\conv\left(\nu \in \ZZ_{\geq 0}^6;\, \deg(T^\nu) = \nu\right).
\]
From $g$ being spread we infer $a, b \in \KK^*$.
For degree reasons, any non-trivial factorization of $h$
has a linear form $\ell = a'T_4 + b'T_5$ with $a', b' \in \KK^*$
among the factors.
From $w_4 \neq w_5$ we deduce that
such $\ell$ is not homogeneous w.r.t the $K$-grading.
We conclude that
$h$ admits no non-trivial presentation as product of homogeneous elements,
i.e., $h \in S$ is $K$-irreducible.
This implies that $T_6 \in R_g$ is $K$-prime.

\medskip
\noindent
We take the next step,
that is to make sure that
each $R_g$ is normal and factorially graded.
For example this holds when $R_g$ admits unique factorization.
Whenever $K$ is torsion-free the converse
is also true.
Here we encounter different classes of candidates.

\medskip
\noindent
\emph{Numbers 1, 2, 5, 6, 10 -- 22, and 26 -- 28}.
One directly checks that the
convex hull over the $\nu \in \ZZ_{\geq 0}^6$ 
with $Q(\nu) = \mu$ is Dolgachev polytope;
we have used the Magma function \texttt{IsDolgachevPolytope} 
from \cite{MagmaToolkit} for this task.
Proposition~\ref{prop:ufdcritforCY}~(ii) ensures
that $R_g$ is factorial after
suitably shrinking $U$.

\medskip
\noindent
\emph{Numbers
\ref{CY3fold:II-1},
\ref{CY3fold:II-2}, and
\ref{CY3fold:VI-iii-1}.}
Here, the cone $\tau' = \cone(w_3) \in \Lambda(S)$
satisfies $(\tau')^\circ \subseteq \Mov(S)^\circ$. 
Thus, Construction~\ref{constr:hypersurfforCY} gives
raise to a toric variety $Z'$.
We have $\mu \in (\tau')^\circ$ and
one directly verifies that $\mu$ is 
basepoint free for $Z'$.
Hence Proposition~\ref{prop:ufdcritforCY}~(i) shows
that after shrinking $U$ suitably,
$R_g$ admits unique factorization 
for all $g \in U$.

\medskip
\noindent
\emph{Number~\ref{CY3fold:III-i-3-tors}.}
We are aiming to apply \cref{factfromSimplexforCY}.
For this purpose we have to verify that $\mu$ occurs as degree associated with
a simplex in the sense of \cref{reminder:Sigmadegree}.
The following polytope does the job:
\[
	B = \conv( (0,0,0,0),\, (0,0,0,3)\, (0,0,9,-3),\, (3,0,3,-1),\, (3,3,3,-2) )
	\subseteq \mathbb{Q}^4.
\]
The rays of its normal fan $\Sigma(B)$ are given as the columns of the
following matrix
\[
 	P_1 = \begin{bmatrix*}[r]
 	-2 & 0 & -1 & 0 &  1 \\
 	-1 & 1 &  0 & 1 & -1 \\
 	-2 & 1 &  1 & 0 &  0 \\
 	-3 & 3 &  0 & 0 &  0 
 	\end{bmatrix*}.
\]
Now consider the stellar subdivision $\Sigma_2$ of $\Sigma(B)$ along
$(-1, 0, 0, 0)$. The associated data of~$\Sigma_2$ is $K_2 = \ZZ^2 \times \ZZ / 3 \ZZ$
and
\[
	P_2 = \begin{bmatrix*}[r]
	-2 & 0 & -1 & 0 &  1 & -1 \\
	-1 & 1 &  0 & 1 & -1 &  0 \\
	-2 & 1 &  1 & 0 &  0 &  0\\
	-3 & 3 &  0 & 0 &  0 &  0 
	\end{bmatrix*}, \qquad
	Q_2 = \begin{bmatrix*}[r]
	1 & 1 & 1 & 0 & 0 & -3 \\
	0 & 0 & 0 & 1 & 1 &  1 \\
	\overline{0} & \overline{1} & \overline{2} & \overline{1} & \overline{2} & \overline{0}
	\end{bmatrix*}.
\]
We compute the $\Sigma_2$-degree $\mu_2$ of $B$. Observe
$a(\Sigma_2) = (9, 0, 0, 0, 3)$.
From this we infer $\mu_2 = Q_2(a(\Sigma_2)) = (0, 3, \overline{0})$.
Note that $(Q_2, \mu_2)$ coincides with the specifying data $(Q, \mu)$ for
which we run the verification process.
In the previous step of this process we have ensured that $U \subseteq S_\mu$ is
a non-empty open subset of prime polynomials such that $T_1, \dotsc, T_6$ define $K$-primes
in $R_g$ whenever $g \in U$.
According to \cref{factfromSimplexforCY}
we may shrink $U$ such that $R_g$ is $K$-factorial for each $g \in U$.

Finally, Bechtold's criterion \citelist{\cite{Be}*{Cor. 0.6} \cite{HaHiWr}*{Prop. 4.1}}
directly implies that $R_g$ is normal
since each five of $w_1, \dotsc, w_6$ generate~$K$ as a group.

\medskip
\noindent
\emph{Numbers
\ref{CY3fold:III-i-5},
\ref{CY3fold:III-ii-1},
\ref{CY3fold:III-ii-2}, and 
\ref{CY3fold:V-ii-1},
\ref{CY3fold:V-ii-2} .}
By applying a suitable 
coordinate change
we achieve that the degree matrix $Q$ and
the relation degree $\mu$ are as in the following table.

\begin{center}
\small
\begin{tabular}{lcc}
\toprule
\emph{No.} & $Q$ & $\mu$ \\ 
\midrule
\ref{CY3fold:III-i-5} & {\small $
	\begin{bmatrix*}[r]
	 1 &  1 &  1 &  6 &  9 & 0 \\
	-1 & -1 & -1 & -4 & -6 & 1	
	\end{bmatrix*}$} &
	$(18, -12)$ \\ \midrule

\ref{CY3fold:III-ii-1} & {\small $
	\begin{bmatrix*}[r]
	 1 &  1 &  1 &  6 &  9 & 0 \\
	-1 & -1 & -1 & -4 & -6 & 1	
	\end{bmatrix*}$} &
	$(8, -4)$ \\ \midrule

\ref{CY3fold:III-ii-2} & {\small $
	\begin{bmatrix*}[r]
	0 &  2 &  2 &  2 &  1 &  1 \\
	1 & -1 & -1 & -1 & -1 & -1	
	\end{bmatrix*}$} &
	$(8, -4)$ \\ \midrule

\ref{CY3fold:V-ii-1} & {\small $
	\begin{bmatrix*}[r]
	0 &  2 &  2 &  4 &  3 &  1 \\
	1 & -1 & -1 & -2 & -2 & -1	
	\end{bmatrix*}$} &
	$(12, -6)$ \\ \midrule

\ref{CY3fold:V-ii-2} & {\small $
	\begin{bmatrix*}[r]
	0 &  2 &  2 &  2 &  7 &  1 \\
	1 & -1 & -1 & -1 & -4 & -1	
	\end{bmatrix*}$} &
	$(14, -7)$ \\
\bottomrule	
\end{tabular}
\end{center}
We apply \cref{prop:ufdcritblowup}.
In the last three cases it is necessary to reorder the variables such
that $Q$ has precisely the shape requested by \cref{prop:ufdcritblowup}.
Now the conditions from there can be directly checked.
As a result, we may shrink $U$ such that
each $R_g$ is a factorial ring.

\medskip
\noindent
\emph{Number~\ref{CY3fold:V-ii-3}.}
Again we want to use \cref{factfromSimplexforCY}
thus we have to present $\mu$ as degree associated with
a simplex in the sense of \cref{reminder:Sigmadegree}.
Consider
\[
	B = \conv((0,0,0,0),\, (0,0,0,8)\, (0,8,0,0),\, (0,0,4,0),\, (2,2,1,2))
	\subseteq \mathbb{Q}^4.
\]
Its normal fan $\Sigma_1 = \Sigma(B)$ has the rays 
given by the columns of the matrix
\[
	P_1 = \begin{bmatrix*}[r]
	0 & 0 & 1 & -1 & 3 \\
	1 & 0 & 2 & -1 & 1 \\
	0 & 1 & 2 & -1 & 1 \\
	0 & 0 & 3 & -1 & 1
	\end{bmatrix*}.
\]
Now consider the stellar subdivision $\Sigma_2$
of $\Sigma(B)$ along $(1, 0, 0, 0)$.
Here associated data of~$\Sigma_2$ is given by
 $K_2 = \ZZ^2$ and
\[
	P_2 = \begin{bmatrix*}[r]
	1 & 0 & 0 & 1 & -1 & 3 \\
	0 & 1 & 0 & 2 & -1 & 1 \\
	0 & 0 & 1 & 2 & -1 & 1 \\
	0 & 0 & 0 & 3 & -1 & 1
	\end{bmatrix*}, \qquad
	Q_2 = \begin{bmatrix*}[r]
	 2 & 1 & 1 & 1 & 3 & 0 \\
	-2 & 0 & 0 & 0 & 1 & 1 \\
	\end{bmatrix*}.
\]
We compute the $\Sigma_2$-degrees $\mu_2$ of $B$. Observe
$a(\Sigma_2) = (0, 8, 0, 0, 0)$.
From this we infer 
$\mu_2 = Q_2(a(\Sigma_2)) = (8, 0)$.
Here $(Q_2, \mu_2)$ equals $(Q, \mu)$ from the specifying data for
which we run the verification process.
In the previous step of this process we have ensured that $U \subseteq S_\mu$ is
a non-empty open subset such that $T_1, \dotsc, T_6$ define primes
in $R_g$ whenver $g \in U$.
Now \cref{factfromSimplexforCY} shows that
we may shrink $U$ such that $R_g$ is factorial for each $g \in U$.

\medskip
\noindent
At this point we have that $U$ defines a general hypersurface Cox ring.
Note that \cref{prop:anticanclassCY} immediately
yields that the corresponding varieties $X_g$ are weakly Calabi-Yau.
The next step is
to attain~$X_g$ being smooth.
Checking the condition from \cref{prop:muambsmoothforCY} with
the help of the Magma program \texttt{IsMuAmbientSmooth} from~\cite{MagmaToolkit}
shows that $Z_\mu$ is smooth in all 30 cases.
Observe that we have $\mu \in \tau$ except
for Numbers~\ref{CY3fold:III-iii-1b},
\ref{CY3fold:IV-ii-2b},
\ref{CY3fold:IV-ii-3b},
and \ref{CY3fold:VI-i-1a}.
Whenever $\mu \in \tau$ holds we may apply \cref{cor:rk2bertiniforCY}
allowing us to shrink $U$ once more such that 
$X_g$ is smooth for all $g \in U$.
The four exceptional cases turn out to be small quasimodifications
of smooth weakly Calabi-Yau threefolds, hence are smooth
by \cref{CYSQMsmooth}.
Eventually \cref{rem:weaklyCY}~(ii) ensures that $X_g$ is
Calabi-Yau.

\bigskip
\noindent
The last task in the proof of \cref{thm:CY3folds} is to make sure that
two varieties from different
families from \cref{thm:CY3folds} are non-isomorphic.
Note that if two varieties from \cref{thm:CY3folds} are isomorphic,
then their Cox rings are isomorphic as graded rings.
For each family from \cref{thm:CY3folds}
we give the number~$l$ of generator degrees,
the entries of the generator degree dimension tuple $(d_1, \dotsc, d_l)$ and
the self-intersection number~$\mu^3$ of the relation degree
in the following table.

\begin{center}
\footnotesize
\begin{minipage}[b]{0.48\linewidth}
\csvreader[%
separator=semicolon,%
longtable={rcccccccc},%
table head=\toprule,%
after head={%
\emph{No.} & $l$ & $d_1$ & $d_2$ & $d_3$ & $d_4$ & $d_5$ & $d_6$ & $\mu^3$ \\ \midrule%
},%
table foot=\bottomrule,%
filter test=\ifnumless{\thecsvinputline}{17}%
]{CY-invars.csv}{}%
{\thecsvrow & \csvcoli & \csvcolii & \csvcoliii & \csvcoliv & \csvcolv & \csvcolvi & \csvcolvii & \csvcolviii}
\end{minipage}
\begin{minipage}[b]{0.48\linewidth}
\csvreader[%
separator=semicolon,%
longtable={rccccccc},%
before first line=\addtocounter{csvrow}{15},%
table head=\toprule,%
after head={%
\emph{No.} & $l$ & $d_1$ & $d_2$ & $d_3$ & $d_4$ & $d_5$ & $\mu^3$ \\ \midrule%
},%
table foot=\bottomrule,%
filter test=\ifnumgreater{\thecsvinputline}{16}%
]{CY-invars.csv}{}%
{\thecsvrow & \csvcoli & \csvcolii & \csvcoliii & \csvcoliv & \csvcolv & \csvcolvi & \csvcolviii}
\end{minipage}
\end{center}

Most of the varieties from \cref{thm:CY3folds} are distinguished
by the generator degree dimension tuple.
Note that the pairs
having the same generator dimension degree tuple
are precisely Numbers~\ref{CY3fold:III-iii-1a} \&~\ref{CY3fold:III-iii-1b},
\ref{CY3fold:IV-ii-2a}~\&~\ref{CY3fold:IV-ii-2b},
\ref{CY3fold:IV-ii-3a}~\&~\ref{CY3fold:IV-ii-3b} and
\ref{CY3fold:VI-i-1a}~\&~\ref{CY3fold:VI-i-1b}
as they share the same Cox ring.
These pairs can be distinguished by the relation degree
self-intersection number; see \cref{lem:reldeg}.
\end{proof}

\begin{landscape}
\scriptsize
\setcounter{table}{0}
\begin{longtable}{crrrrrr}
\captionsetup{font=scriptsize}
\caption{%
Binomials used to ensure primeness
of $T_1, \dotsc, T_6 \in R_g$ in the proof of \cref{thm:CY3folds}%
}
\label{tab:CY-primebin} \\
\toprule
\emph{No.} & $T_1$ & $T_2$ & $T_3$ & $T_4$ & $T_5$ & $T_6$
\csvreader[separator=semicolon,head to column names,before line={\\ \addlinespace[2pt]},before first line=\\ \midrule,late after last line=\\ \bottomrule]{CY-verification_binomials.csv}{}%
{ \csvcoli & $\csvcolii$ & $\csvcoliii$ & $\csvcoliv$ & $\csvcolv$ & $\csvcolvi$ & $\csvcolvii$ }
\end{longtable}
\end{landscape}

\begin{bibdiv}
\begin{biblist}
\bib{ACG16}{article}{
   author={Artebani, Michela},
   author={Comparin, Paola},
   author={Guilbot, Robin},
   title={Families of Calabi-Yau hypersurfaces in $\Bbb{Q}$-Fano toric
   varieties},
   language={English, with English and French summaries},
   journal={J. Math. Pures Appl. (9)},
   volume={106},
   date={2016},
   number={2},
   pages={319--341},
   issn={0021-7824},
   review={\MR{3515305}},
   doi={10.1016/j.matpur.2016.02.012},
}

\bib{ACG}{article}{
   author={Artebani, Michela},
   author={Comparin, Paola},
   author={Guilbot, Robin},
   title={Quasismooth hypersurfaces in toric varieties},
   journal={Proc. Amer. Math. Soc.},
   volume={147},
   date={2019},
   number={11},
   pages={4565--4579},
   issn={0002-9939},
}

\bib{ACL}{article}{
   author={Artebani, Michela},
   author={Correa Deisler, Claudia},
   author={Laface, Antonio},
   title={Cox rings of K3 surfaces of Picard number three},
   journal={J. Algebra},
   volume={565},
   date={2021},
   pages={598--626},
   issn={0021-8693},
   review={\MR{4163070}},
   doi={10.1016/j.jalgebra.2020.08.016},
}

\bib{AHL}{article}{
   author={Artebani, Michela},
   author={Hausen, J\"{u}rgen},
   author={Laface, Antonio},
   title={On Cox rings of K3 surfaces},
   journal={Compos. Math.},
   volume={146},
   date={2010},
   number={4},
   pages={964--998},
   issn={0010-437X},
   review={\MR{2660680}},
   doi={10.1112/S0010437X09004576},
}

\bib{ADHL}{book}{
   author={Arzhantsev, Ivan},
   author={Derenthal, Ulrich},
   author={Hausen, J\"urgen},
   author={Laface, Antonio},
   title={Cox rings},
   series={Cambridge Studies in Advanced Mathematics},
   volume={144},
   publisher={Cambridge University Press, Cambridge},
   date={2015},
   pages={viii+530},
   isbn={978-1-107-02462-5},
}

\bib{Ba94}{article}{
   author={Batyrev, Victor V.},
   title={Dual polyhedra and mirror symmetry for Calabi-Yau hypersurfaces in
   toric varieties},
   journal={J. Algebraic Geom.},
   volume={3},
   date={1994},
   number={3},
   pages={493--535},
   issn={1056-3911},
   review={\MR{1269718}},
}

\bib{Ba17}{article}{
   author={Batyrev, Victor},
   title={The stringy Euler number of Calabi-Yau hypersurfaces in toric
   varieties and the Mavlyutov duality},
   journal={Pure Appl. Math. Q.},
   volume={13},
   date={2017},
   number={1},
   pages={1--47},
   issn={1558-8599},
   review={\MR{3858013}},
   doi={10.4310/PAMQ.2017.v13.n1.a1},
}

\bib{Be}{article}{
   author={Bechtold, Benjamin},
   title={Valuative and geometric characterizations of Cox sheaves},
   journal={J. Commut. Algebra},
   volume={10},
   date={2018},
   number={1},
   pages={1--43},
   issn={1939-0807},
   review={\MR{3804845}},
   doi={10.1216/JCA-2018-10-1-1},
}
\bib{BrGe}{article}{
   author={Brown, Gavin},
   author={Georgiadis, Konstantinos},
   title={Polarized Calabi-Yau 3-folds in codimension 4},
   journal={Math. Nachr.},
   volume={290},
   date={2017},
   number={5-6},
   pages={710--725},
   issn={0025-584X},
   review={\MR{3636373}},
   doi={10.1002/mana.201600123},
}

\bib{BrKa}{article}{
   author={Brown, Gavin},
   author={Kasprzyk, Alexander},
   title={Four-dimensional projective orbifold hypersurfaces},
   journal={Exp. Math.},
   volume={25},
   date={2016},
   number={2},
   pages={176--193},
   issn={1058-6458},
   review={\MR{3463567}},
   doi={10.1080/10586458.2015.1054054},
}

\bib{BrKaLe}{article}{
      title={Gorenstein formats, canonical and Calabi-Yau threefolds}, 
      author={Brown, Gavin},
      author={Kasprzyk, Alexander},
      author={Zhu, Lei},
      year={2014},
      eprint={1409.4644},
      archivePrefix={arXiv},
      primaryClass={math.AG}
}

\bib{CDL}{article}{
   author={Candelas, P.},
   author={Dale, A. M.},
   author={L\"{u}tken, C. A.},
   author={Schimmrigk, R.},
   title={Complete intersection Calabi-Yau manifolds},
   journal={Nuclear Phys. B},
   volume={298},
   date={1988},
   number={3},
   pages={493--525},
   issn={0550-3213},
   review={\MR{928308}},
   doi={10.1016/0550-3213(88)90352-5},
}

\bib{CDHJS}{article}{
   author={Chen, Weichung},
   author={Di Cerbo, Gabriele},
   author={Han, Jingjun},
   author={Jiang, Chen},
   author={Svaldi, Roberto},
   title={Birational boundedness of rationally connected Calabi-Yau 3-folds},
   journal={Adv. Math.},
   volume={378},
   date={2021},
   pages={107541, 32},
   issn={0001-8708},
   review={\MR{4191257}},
   doi={10.1016/j.aim.2020.107541},
}
		
\bib{Cy}{article}{
   author={Cynk, S\l awomir},
   author={Kocel-Cynk, Beata},
   title={Classification of double octic Calabi-Yau threefolds with
   $h^{1,2}\leq1$ defined by an arrangement of eight planes},
   journal={Commun. Contemp. Math.},
   volume={22},
   date={2020},
   number={1},
   pages={1850082, 38},
   issn={0219-1997},
   review={\MR{4064906}},
   doi={10.1142/S0219199718500827},
}
\bib{GHL}{article}{
   author={Gray, James},
   author={Haupt, Alexander S.},
   author={Lukas, Andre},
   title={All complete intersection Calabi-Yau four-folds},
   journal={J. High Energy Phys.},
   date={2013},
   number={7},
   pages={070, front matter+17},
   issn={1126-6708},
   review={\MR{3106229}},
   doi={10.1007/JHEP07(2013)070},
}

\bib{GRvdH}{article}{
   author={Grimm, Thomas W.},
   author={Ruehle, Fabian},
   author={van de Heisteeg, Damian},
   title={Classifying Calabi-Yau Threefolds Using Infinite Distance Limits},
   journal={Comm. Math. Phys.},
   volume={382},
   date={2021},
   number={1},
   pages={239--275},
   issn={0010-3616},
   review={\MR{4223474}},
   doi={10.1007/s00220-021-03972-9},
}
\bib{HaKa}{article}{
   author={Hashimoto, Kenji},
   author={Kanazawa, Atsushi},
   title={Calabi-Yau three-folds of type K (I): Classification},
   journal={Int. Math. Res. Not. IMRN},
   date={2017},
   number={21},
   pages={6654--6693},
   issn={1073-7928},
   review={\MR{3719475}},
   doi={10.1093/imrn/rnw197},
}

\bib{HaHiWr}{article}{
   author={Hausen, J\"{u}rgen},
   author={Hische, Christoff},
   author={Wrobel, Milena},
   title={On torus actions of higher complexity},
   journal={Forum Math. Sigma},
   volume={7},
   date={2019},
   pages={e38},
}

\bib{HaKeWo}{article}{
   author={Hausen, J\"{u}rgen},
   author={Keicher, Simon},
   author={Wolf, R\"{u}diger},
   title={Computing automorphisms of Mori dream spaces},
   journal={Math. Comp.},
   volume={86},
   date={2017},
   number={308},
   pages={2955--2974},
   issn={0025-5718},
   review={\MR{3667033}},
   doi={10.1090/mcom/3185},
}
\bib{HLM}{article}{
      title={On smooth Fano fourfolds of Picard number two}, 
      author={Hausen, Jürgen},
      author={Laface, Antonio},
      author={Mauz, Christian},
      journal={Rev. Mat. Iberoam., to appear}
      eprint={1907.08000},
}
\bib{HaMaWr}{article}{
    title={The anticanonical complex for non-degenerate toric complete intersections},
    author={Jürgen Hausen},
    author={Christian Mauz},
    author={Milena Wrobel},
    year={2020},
    eprint={2006.04723},
    archivePrefix={arXiv},
    primaryClass={math.AG}
}

\bib{KaOk}{article}{
   author={Kawamata, Yujiro},
   author={Okawa, Shinnosuke},
   title={Mori dream spaces of Calabi-Yau type and log canonicity of Cox
   rings},
   journal={J. Reine Angew. Math.},
   volume={701},
   date={2015},
   pages={195--203},
   issn={0075-4102},
   review={\MR{3331731}},
   doi={10.1515/crelle-2013-0029},
}

\bib{HuKe}{article}{
   author={Hu, Yi},
   author={Keel, Sean},
   title={Mori dream spaces and GIT},
   note={Dedicated to William Fulton on the occasion of his 60th birthday},
   journal={Michigan Math. J.},
   volume={48},
   date={2000},
   pages={331--348},
   issn={0026-2285},
 }

 \bib{IF}{article}{
   author={Iano-Fletcher, A. R.},
   title={Working with weighted complete intersections},
   conference={
      title={Explicit birational geometry of 3-folds},
   },
   book={
      series={London Math. Soc. Lecture Note Ser.},
      volume={281},
      publisher={Cambridge Univ. Press, Cambridge},
   },
   date={2000},
   pages={101--173},
} 
\bib{Ko}{article}{
   author={Koll\'{a}r, J\'{a}nos},
   title={Flops},
   journal={Nagoya Math. J.},
   volume={113},
   date={1989},
   pages={15--36},
   issn={0027-7630},
   review={\MR{986434}},
   doi={10.1017/S0027763000001240},
}

\bib{KoMo}{book}{
   author={Koll\'{a}r, J\'{a}nos},
   author={Mori, Shigefumi},
   title={Birational geometry of algebraic varieties},
   series={Cambridge Tracts in Mathematics},
   volume={134},
   note={With the collaboration of C. H. Clemens and A. Corti;
   Translated from the 1998 Japanese original},
   publisher={Cambridge University Press, Cambridge},
   date={1998},
   pages={viii+254},
   isbn={0-521-63277-3},
   review={\MR{1658959}},
   doi={10.1017/CBO9780511662560},
}
		
\bib{KrSk}{article}{
   author={Kreuzer, Maximilian},
   author={Skarke, Harald},
   title={Reflexive polyhedra, weights and toric Calabi-Yau fibrations},
   journal={Rev. Math. Phys.},
   volume={14},
   date={2002},
   number={4},
   pages={343--374},
   issn={0129-055X},
   review={\MR{1901220}},
   doi={10.1142/S0129055X0200120X},
}
\bib{MagmaToolkit}{webpage}{
    author={Mauz, Christian},
    title={Magma toolkit},
    url={https://github.com/cm-math/ci-tools}
}

\bib{Ma}{thesis}{
   author={Mauz, Christian},
   title={On Fano and Calabi-Yau varieties with hypersurface Cox ring},
   organization={Universtität Tübingen}
   type={PhD Thesis, in preparation},
} 

\bib{McK}{article}{
   author={McKernan, James},
   title={Mori dream spaces},
   journal={Jpn. J. Math.},
   volume={5},
   date={2010},
   number={1},
   pages={127--151},
   issn={0289-2316},
   review={\MR{2609325}},
   doi={10.1007/s11537-010-0944-7},
}

\bib{MMP1}{article}{
   author={Mori, Shigefumi},
   title={Classification of higher-dimensional varieties},
   conference={
      title={Algebraic geometry, Bowdoin, 1985},
      address={Brunswick, Maine},
      date={1985},
   },
   book={
      series={Proc. Sympos. Pure Math.},
      volume={46},
      publisher={Amer. Math. Soc., Providence, RI},
   },
   date={1987},
   pages={269--331},
   review={\MR{927961}},
}

\bib{MMP2}{article}{
   author={Mori, Shigefumi},
   title={Flip theorem and the existence of minimal models for $3$-folds},
   journal={J. Amer. Math. Soc.},
   volume={1},
   date={1988},
   number={1},
   pages={117--253},
   issn={0894-0347},
   review={\MR{924704}},
   doi={10.2307/1990969},
}
\bib{Og}{article}{
   author={Oguiso, Keiji},
   title={On polarized Calabi-Yau $3$-folds},
   journal={J. Fac. Sci. Univ. Tokyo Sect. IA Math.},
   volume={38},
   date={1991},
   number={2},
   pages={395--429},
   issn={0040-8980},
   review={\MR{1127089}},
}

\bib{Og94}{article}{
   author={Oguiso, Keiji},
   title={On the complete classification of Calabi-Yau threefolds of type
   ${\rm III}_0$},
   conference={
      title={Higher-dimensional complex varieties},
      address={Trento},
      date={1994},
   },
   book={
      publisher={de Gruyter, Berlin},
   },
   date={1996},
   pages={329--339},
   review={\MR{1463185}},
}
\bib{Ot}{article}{
   author={Ottem, John Christian},
   title={Cox rings of K3 surfaces with Picard number 2},
   journal={J. Pure Appl. Algebra},
   volume={217},
   date={2013},
   number={4},
   pages={709--715},
   issn={0022-4049},
   review={\MR{2983845}},
   doi={10.1016/j.jpaa.2012.09.016},
}
\bib{PrSh}{article}{
   author={Przyjalkowski, Victor},
   author={Shramov, Constantin},
   title={Bounds for smooth Fano weighted complete intersections},
   journal={Commun. Number Theory Phys.},
   volume={14},
   date={2020},
   number={3},
   pages={511--553},
   issn={1931-4523},
   review={\MR{4124110}},
   doi={10.4310/CNTP.2020.v14.n3.a3},
}
\bib{SchSk}{article}{
   author={Sch\"{o}ller, Friedrich},
   author={Skarke, Harald},
   title={All weight systems for Calabi-Yau fourfolds from reflexive
   polyhedra},
   journal={Comm. Math. Phys.},
   volume={372},
   date={2019},
   number={2},
   pages={657--678},
   issn={0010-3616},
   review={\MR{4032876}},
   doi={10.1007/s00220-019-03331-9},
}
\bib{Wi21}{article}{
      title={Boundedness questions for Calabi-Yau threefolds}, 
      author={Wilson, Pelham},
      year={2021},
      eprint={1706.01268},
      archivePrefix={arXiv},
      primaryClass={math.AG}
}
\end{biblist}
\end{bibdiv}

\end{document}